\newtheorem{thm}{Theorem}
\newtheorem{lem}{Lemma}
\newtheorem{cor}{Corollary}
\newtheorem{ex}{Example}
\newtheorem{prop}{Proposition}
\newtheorem{rem}{Remark}
\newtheorem*{problem*}{Problem}
\newtheorem*{assumption*}{Assumption}
\newtheorem*{exs*}{Examples}
\pgfplotsset{compat=newest}
\newcommand\reallywidehat[1]{%
	\savestack{\tmpbox}{\stretchto{%
			\scaleto{%
				\scalerel*[\widthof{\ensuremath{#1}}]{\kern-.6pt\bigwedge\kern-.6pt}%
				{\rule[-\textheight/2]{1ex}{\textheight}}
			}{\textheight}%
		}{0.75ex}}%
	\stackon[2pt]{#1}{\tmpbox}%
}
\journal{Inverse Problems}
\begin{document}

\onehalfspacing

\begin{frontmatter}

\title{Inversion of $\alpha$-sine and $\alpha$-cosine transforms on $\mathbb{R}$}

\author{Ly Viet Hoang}
\ead{ly.hoang@uni-ulm.de}
\author{Evgeny Spodarev}
\ead{evgeny.spodarev@uni-ulm.de}
\address{Ulm University}

%
%
%
%
%

\begin{abstract}
We consider the $\alpha$-sine transform of the form $T_\alpha f(y)=\int_0^\infty\vert\sin(xy)\vert^\alpha f(x)dx$ for $\alpha>-1$, where $f$ is an integrable function on $\mathbb{R}_+$. First, the inversion of this transform for $\alpha>1$ is discussed in the context of a more general family of integral transforms on the space of weighted, square-integrable functions on the positive real line. 
In an alternative approach, we show that the $\alpha$-sine transform of a function $f$ admits a series representation for all $\alpha>-1$, which involves the Fourier transform of $f$ and coefficients which can all be explicitly computed with the Gauss hypergeometric theorem. Based on this series representation we construct a system of linear equations whose solution is an approximation of the Fourier transform of $f$ at equidistant points. Sampling theory and Fourier inversion allow us to compute an estimate of $f$ from its $\alpha$-sine transform. The same approach can be extended to a similar $\alpha$-cosine transform on $\mathbb{R}_+$ for $\alpha>-1$, and the two-dimensional spherical $\alpha$-sine and cosine transforms for $\alpha>-1$, $\alpha\neq 0,2,4,\dots$. 
In an extensive numerical analysis, we consider a number of examples, and compare the inversion results of both methods presented. 


\end{abstract}

\begin{keyword}
Fourier analysis  \sep integral transform \sep sine transform \sep cosine transform \sep spherical cosine transform \sep spherical sine transform \sep inverse problem \sep hypergeometric function \sep cardinal series \sep stable process
\end{keyword}

\end{frontmatter}


\nocite{gennady}
\nocite{glueckrothspodarev}
\nocite{approxinv}
\nocite{clasfour}
\nocite{bracewell,marks}
\nocite{zayed}
\nocite{brown}
\nocite{anastassiou}
\nocite{handbookconvex}
\nocite{stochgeomappl}

\section{Introduction}
\label{intro}

Spherical $\alpha$-sine and $\alpha$-cosine integral transforms and their inversion are of particular interest in stochastic geometry and tomography. For example when analyzing the structure of a fibrous material, the so-called rose of intersections is the spherical cosine transform of the directional distribution measure of the fibers \cite{goodey3, matheron, spodarev2}. In the context of convex geometry, the support function of a zonoid is the $\alpha$-cosine transform of some generating signed measure \cite{weil}.

Spherical $\alpha$-sine and $\alpha$-cosine transforms, and in particular the closely related spherical Radon transform, were extensively studied in the last few decades. Groemer \cite{groemer} and Helgason \cite{hel}  presented many results in the fields of integral geometry and convex analysis.
Further important work was done by Goodey and Weil \cite{goodey1,goodey2}, Mecke \cite{mecke1}, as well as Rubin \cite{invformulas,interseccosine} in light of fiber processes and stochastic geometry.

In this paper, contrary to the spherical transforms above, we consider the integral transforms on the positive real line. We study the solution of the integral equation
\begin{align*}
	g(y)=\int\limits_0^\infty\left\vert\sin\left(xy\right)\right\vert^\alpha f(x)dx,\quad y>0,
\end{align*}
for $\alpha>-1$, where the integral transform on the right-hand side is the aforementioned $\alpha$-sine transform. 

The inversion of this $\alpha$-sine transform is applicable in the context of stationary real harmonizable symmetric $\alpha$-stable random processes. These processes are uniquely determined by their so-called control measure. Furthermore, the codifference function describes their dependence structure. It is a generalization of the covariance function to the $\alpha$-stable case with $0<\alpha<2$, where second moments are infinite. 

Assuming this control measure has a density function $f$, which we refer to as \emph{spectral density}, with respect to the Lebesgue measure on $\mathbb{R}$, it can be shown the $\alpha$-sine transform of $f$ can be obtained from the codifference function. Estimation of the codifference function (and other parameters) as well as the subsequent inversion of the $\alpha$-sine transform yield the spectral density $f$, cf. Section 5.

Another application example is given in the case when	
$f$ is a $2\pi$-periodic functions on $\mathbb{R}$. Then, it suffices to consider the above $\alpha$-sine transform (or $\alpha$-cosine transform when the integral kernel is replaced by the cosine function) on the interval $[-\pi,\pi]$. This coincides with the two-dimensional spherical $\alpha$-sine and $\alpha$-cosine transforms on the unit circle. 

We will first introduce necessary notation and transformations in Section \ref{prelim}. In Section \ref{directapproach} we consider the inversion for $\alpha>1$ in the context of a more general class of integral transformations on the space of weighted $L^2$-functions. We will refer to this as the \emph{direct approach}.
An alternative, approximative approach, applicable to $\alpha>-1$ is given in Section \ref{Ffapprox}. This approach relies on the relation between the $\alpha$-sine transform and the classical Fourier transform, hence the name \emph{Fourier approximation approach}. Our approach can also be extended to $\alpha$-cosine transforms, which will be introduced later. Applications in the context of harmonizable symmetric $\alpha$-stable stochastic processes and two-dimensional spherical $\alpha$-sine and cosine transforms are outlined in Section \ref{appl}.
Lastly, numerical results of each approach are presented and discussed in Section \ref{num}.

\section{Preliminaries}
\label{prelim}

Denote by $\mathbb{R}_+$ the positive reals. Let $C^k(\mathbb{R}_+)$ be the space of $k$-times continuously differentiable functions on $\mathbb{R}_+$ with $C(\mathbb{R}_+)$ being the class of all continuous functions. Denote by $C_b(A)$ the space of bounded continuous functions on the interval $A\subseteq\mathbb{R}_+$.
We define the space of $p$-integrable functions on $\mathbb{R_+}$ with respect to the measure $w$ by $L^p(\mathbb{R}_+,w)$. 
Furthermore, we write $L^p(\mathbb{R}_+)=L^p(\mathbb{R}_+,dx)$ for the space of $p$-integrable functions with respect to the Lebesgue measure on $\mathbb{R}_+$ with
$L^p$-norm $\Vert f\Vert_p=\left(\int_0^\infty\vert f(x)\vert^pdx\right)^{1/p}$ for $f\in L^p(\mathbb{R}_+)$, $p\geq 1$. For $p=\infty$ this is the uniform norm $\Vert f\Vert_\infty=\text{ess}\sup_{x\in\mathbb{R}_+}\vert f(x)\vert$. We say that a sequence $(f_n)_{n\in\mathbb{N}}$ converges to some limit $f$ in $L^p$ if the $L^p$-distance $\Vert f_n-f\Vert_p$ converges to 0 as $n$ tends to infinity. 

Define the \emph{$\alpha$-sine transform} $T_\alpha $ for $\alpha>-1$ by 
\begin{align}
	\label{asinetrafo}
	T_\alpha f(y)=\int\limits_0^\infty\left\vert\sin\left(xy\right)\right\vert^\alpha f(x)dx,
\end{align} 
which is even in its argument, therefore, it suffices to consider $y\in\mathbb{R}_+$. 

We define the weighted function space $L^1_{w,\alpha}(\mathbb{R}_+)$ by 
\begin{align}
	\label{weightedLp}
	L^1_{w,\alpha}(\mathbb{R}_+)=\begin{cases}
		L^1(\mathbb{R}_+),&\alpha\geq0,\\
		L^1(\mathbb{R}_+)\cap L^1\left(\mathbb{R}_+,\max\left\{\frac{1}{x},1\right\}dx\right),&-1<\alpha<0.
	\end{cases}
\end{align}
\begin{lem}
	\label{lemma1}
	For any $f\in L^1_{w,\alpha}(\mathbb{R}_+)$ the function $T_\alpha f$ is well defined almost everywhere on $\mathbb{R}_+$. Additionally, it holds that $T_\alpha f(0)=0$ in case $\alpha>0$. 
\end{lem}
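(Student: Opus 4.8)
The plan is to split the argument according to the sign of $\alpha$, since the difficulty lies entirely in the range $-1<\alpha<0$, where $|\sin(xy)|^\alpha$ is unbounded near the zeros of the sine. For $\alpha\geq 0$ the bound $|\sin(xy)|^\alpha\leq 1$ holds pointwise, so $|T_\alpha f(y)|\leq\int_0^\infty|f(x)|\,dx=\|f\|_1<\infty$ for every $y$, using only $f\in L^1(\mathbb{R}_+)$; in particular $T_\alpha f$ is then defined everywhere. For $\alpha>0$ and $y=0$ the integrand equals $|\sin 0|^\alpha f(x)=0^\alpha f(x)=0$ for all $x$, whence $T_\alpha f(0)=0$. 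This disposes of the case $\alpha\geq 0$ and of the second assertion.

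For $-1<\alpha<0$ I would establish finiteness of $T_\alpha|f|(y)$ for a.e.\ $y$ by integrating over an arbitrary compact interval $[a,b]\subset\mathbb{R}_+$ and applying Tonelli's theorem to the nonnegative integrand:
\begin{align*}
\int_a^b T_\alpha|f|(y)\,dy=\int_0^\infty|f(x)|\left(\int_a^b|\sin(xy)|^\alpha\,dy\right)dx.
\end{align*}
The substitution $u=xy$ turns the inner integral into $\tfrac1x\int_{ax}^{bx}|\sin u|^\alpha\,du$. Since $|\sin u|^\alpha$ is $\pi$-periodic with finite period integral $c_\alpha:=\int_0^\pi|\sin u|^\alpha\,du$ --- finite precisely because $\alpha>-1$, as $|\sin u|^\alpha\sim|u-k\pi|^\alpha$ near each zero $k\pi$ --- a counting estimate over the periods met by $[ax,bx]$ gives $\int_{ax}^{bx}|\sin u|^\alpha\,du\leq c_\alpha\big((b-a)x/\pi+1\big)$.

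Dividing by $x$ yields the pointwise bound $\tfrac1x\int_{ax}^{bx}|\sin u|^\alpha\,du\leq c_\alpha\big((b-a)/\pi+1/x\big)\leq c_\alpha\big((b-a)/\pi+1\big)\max\{1/x,1\}$, where I use $\max\{1/x,1\}\geq 1$ and $1/x\leq\max\{1/x,1\}$. Substituting back,
\begin{align*}
\int_a^b T_\alpha|f|(y)\,dy\leq c_\alpha\Big(\tfrac{b-a}{\pi}+1\Big)\int_0^\infty|f(x)|\max\{1/x,1\}\,dx<\infty
\end{align*}
by the definition of $L^1_{w,\alpha}(\mathbb{R}_+)$ for $-1<\alpha<0$. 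A finite integral forces $T_\alpha|f|(y)<\infty$ for a.e.\ $y\in[a,b]$; exhausting $\mathbb{R}_+$ by a countable family such as $[1/n,n]$ then shows that $T_\alpha|f|(y)$, and hence $T_\alpha f(y)$, is finite for a.e.\ $y\in\mathbb{R}_+$.

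I expect the main obstacle to be the behavior near $x=0$: there the inner integral genuinely blows up like $x^\alpha$ (and is controlled only by the crude bound $c_\alpha/x$ above), and it is exactly this growth that the weight $\max\{1/x,1\}$ in $L^1_{w,\alpha}$ is tailored to absorb. The local singularities of $|\sin u|^\alpha$ at the interior zeros $u=k\pi$ are harmless, being integrable for every $\alpha>-1$, and are fully accounted for by the finiteness of $c_\alpha$; the only quantitative care needed is the uniform counting estimate on $[ax,bx]$ and its matching against the weight.
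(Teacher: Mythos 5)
Your proposal is correct and follows essentially the same route as the paper: the trivial pointwise bound for $\alpha\geq 0$, and for $-1<\alpha<0$ an application of Tonelli/Fubini, the substitution $u=xy$, the periodicity counting estimate with $c_\alpha=\int_0^\pi|\sin u|^\alpha\,du<\infty$, absorption of the resulting $1/x$ term by the weight $\max\{1/x,1\}$, and exhaustion of $\mathbb{R}_+$ by countably many compact intervals. The only cosmetic differences are that the paper integrates over $[0,K]$ and exhausts by $K\in\mathbb{N}$ where you use $[a,b]$ and $[1/n,n]$, which changes nothing of substance.
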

\begin{proof}
	Let $\alpha\geq 0$. Note that by the triangle inequality it holds that $\vert T_\alpha f(y)\vert\leq T_\alpha\vert f\vert (y)\leq\Vert f\Vert_1$ for all $y\in\mathbb{R}_+$. The relation $T\alpha f(0)=0$ is trivially satisfied for $\alpha>0$.
	For $-1<\alpha<0$ the finiteness of $\int\limits_0^K\vert T_\alpha f(y)\vert dy$ would imply that $T_\alpha f$ is finite almost everywhere on the interval $[0,K]$. Again, by the triangle inequality it holds that
	\begin{align*}
		\int\limits_0^K\vert T_\alpha f(y)\vert dy\leq\int\limits_0^KT_\alpha \vert f\vert(y)dy=\int\limits_0^K\left(\int\limits_0^\infty\left\vert\sin(xy)\right\vert^\alpha\vert f(x)\vert dx\right)dy.
	\end{align*}
Using Fubini's theorem we can further compute 
	\begin{align*}
		\int\limits_0^K\left(\int\limits_0^\infty\left\vert\sin(xy)\right\vert^\alpha\vert f(x)\vert dx\right)dy=\int\limits_0^\infty\left(\int\limits_0^K\vert\sin(xy)\vert^\alpha dy\right)\vert f(x)\vert dx=\int\limits_0^\infty\left(\frac{1}{x}\int\limits_0^{Kx}\vert\sin(u)\vert^\alpha du\right)\vert f(x)\vert dx,
	\end{align*}
	where the last equality stems from the substitution $u=xy$. Since $\vert\sin(u)\vert$ is $\pi$-periodic, we can estimate 
	\begin{align*}
		\frac{1}{x}\int\limits_0^{Kx}\vert\sin(u)\vert^\alpha du\leq\frac{1}{x}\left(\left\lfloor\frac{Kx}{\pi}\right\rfloor+1\right)\underbrace{\int\limits_0^\pi\vert\sin(u)\vert^\alpha du}_{\eqqcolon C_\alpha}\leq\frac{1}{x}\left(\frac{Kx}{\pi}+1\right)C_\alpha=\left(\frac{K}{\pi}+\frac{1}{x}\right)C_\alpha,
	\end{align*} 
	where the constant $C_\alpha$ is given by $C_\alpha=\sqrt{\pi}\frac{\Gamma\left(\frac{1+\alpha}{2}\right)}{\Gamma\left(1+\frac{\alpha}{2}\right)}$ for $\alpha>-1$. The above converges to $K/\pi$ as $x\rightarrow\infty$, and demanding $f\in L^1(\mathbb{R}_+)\cap L^1\left(\mathbb{R}_+,\max\left\{\frac{1}{x},1\right\}dx\right)$ ensures that $\int\limits_0^K\vert T_\alpha f(y)\vert dy<\infty$. Hence, 
	by the integrability of $T_\alpha f$ on $[0,K]$, it holds that $\vert T_\alpha f\vert$ is finite almost everywhere on $[0,K]$. Using the subadditivity of the Lebesgue measure, it follows that $T_\alpha f$ is finite almost everywhere on $\mathbb{R}_+$, i.e.
	\begin{align*}
		\mathcal{L}\left(\left\{y\in\mathbb{R}_+:\vert T_\alpha f(y)\vert=\infty\right\}\right)
		&=\mathcal{L}\left(\bigcup_{K\in\mathbb{N}}\left\{y\in[0,K]:\vert T_\alpha f(y)\vert=\infty\right\}\right)\leq \sum\limits_{K\in\mathbb{N}}\underbrace{\mathcal{L}\left(\left\{y\in[0,K]:\vert T_\alpha f(y)\vert=\infty\right\}\right)}_{=0,~K\in\mathbb{N}}=0,
	\end{align*}
	where $\mathcal{L}$ denotes the Lebesgue measure. 
\end{proof}

For $\alpha\geq0$, using the triangle inequality for integrals, one can show that the transform $T_\alpha $ is a bounded linear operator from $L^1_{w,\alpha}$ into the space of bounded continuous functions on $\mathbb{R}_+$. 
In the case $-1<\alpha<0$, one needs to impose more conditions on the function $f$ such that $T_\alpha $ is bounded. Both cases are analyzed in detail in Theorem \ref{boundedoperator}. 

The goal is to invert the transform $T_\alpha $, or in other words to solve the integral equation $g=T_\alpha f$ for the function $f$.
Each approach, presented in Sections \ref{directapproach} and \ref{Ffapprox}, respectively, requires the introduction of different integral operators and special functions, which will be given in the following.

Section \ref{Ffapprox} establishes the close relationship between the $\alpha$-sine transform (\ref{asinetrafo}) and the \emph{classical Fourier transform} on $\mathbb{R}$. The $\alpha$-sine transform is well defined for all functions from the space $L_{w,\alpha}^1(\mathbb{R}_+)$. We can evenly extend functions $f\in L_{w,\alpha}^1(\mathbb{R}_+)$ to the negative half of the real line by setting $f(-x)=f(x)$ for all $x\in\mathbb{R}$. For ease of notation, we denote this by $f\in L_{e,w,\alpha}^1(\mathbb{R}_+)$. Similarly, we denote by $L_e^p(\mathbb{R})$ the space of all even $L^p$-functions. 

Define the \emph{ Fourier transform} and its \emph{inverse transform} on the space of integrable functions $L^1(\mathbb{R})$ by
\begin{align*}
	\mathcal{F}v(y)=\int\limits_\mathbb{R} e^{ixy}v(x)dx~,\qquad
	\mathcal{F}^{-1}w(x)=\frac{1}{2\pi}\int\limits_\mathbb{R}e^{-ixy}w(y)dy~.
\end{align*}
for $v,w\in L^1(\mathbb{R})$. By the Euler formula, the integral kernels $e^{ixy}$ and $e^{-ixy}$ in the definition of the Fourier transform above can be replaced by $\cos(xy)$ for even functions $v,w\in L^1_e(\mathbb{R})$.

Note that on the space of Lebesgue integrable functions $L^1(\mathbb{R})$ the above Fourier transform is bounded, uniformly continuous, and vanishes at infinity by the Riemann-Lebesgue lemma \cite[Prop. 2.2.17.]{clasfour}.
It is well known that the Fourier transform of an integrable function might not be integrable itself. Therefore, only under certain additional conditions on $v$ the Fourier inversion theorem $v=\mathcal{F}^{-1}\mathcal{F}v=\mathcal{F}\mathcal{F}^{-1}v$ is applicable, e.g. if $v$ is a Schwartz function, or if it is integrable and square integrable \cite[Section 2.2.4]{clasfour}. 
On the space $L^1(\mathbb{R})\cap L^2(\mathbb{R})$ the Fourier transform $\mathcal{F}$ is a $L^2$-isometry, i.e. $\Vert \mathcal{F}v\Vert_2=2\pi\Vert v\Vert_2$ by the Plancherel theorem.
Furthermore, let $(v_n)_{n\in\mathbb{N}}$ be a sequence of functions in $L^1(\mathbb{R})\cap L^2(\mathbb{R})$ with $v_n\rightarrow v$ in $L^2$-norm as $n\rightarrow\infty$. Then, the convergence is preserved under the Fourier transform in the sense that $\mathcal{F}v_n\rightarrow\mathcal{F}v$ in the $L^2$-norm as $n\rightarrow\infty$ \cite[Eq. 2.2.16]{clasfour}. 

For Section \ref{directapproach} we consider the \emph{Fourier transform and its inverse transform on the multiplicative group $(\mathbb{R}_+,\cdot)$} by $\mathcal{F}_+: L^2\left(\mathbb{R}_+,\frac{dx}{x}\right)\rightarrow L^2\left(\mathbb{R}_+,\frac{dx}{x}\right)$ with
\begin{align*}
	\mathcal{F}_+v(y)=\int\limits_0^\infty e^{-i\log(x)\log(y)}v(x)\frac{dx}{x}~,\qquad
	\mathcal{F}_+^{-1}w(x)=\frac{1}{2\pi}\int\limits_0^\infty e^{i\log(x)\log(y)}w(y)\frac{dy}{y}
\end{align*}
for $v,w\in L^2\left(\mathbb{R}_+,\frac{dx}{x}\right)$,
as well as the similarity transform $\mathcal{M}:L^2\left(\mathbb{R}_+,x^cdx\right)\rightarrow L^2\left(\mathbb{R}_+,\frac{dx}{x}\right)$ and its inverse by
\begin{align*}
	\mathcal{M}v(y)=y^{(c+1)/2}v(y)~,\qquad
	\mathcal{M}^{-1}w(x)=x^{-(c+1)/2}w(x)
\end{align*}
for $v\in L^2\left(\mathbb{R}_+,x^cdx\right)$, $w\in L^2\left(\mathbb{R}_+,\frac{dx}{x}\right)$.

Additionally, we state the following useful result. For any complex number $z\in\mathbb{C}$ one can expand
\begin{align}
\label{binomseries}
	\left(1+x\right)^z=\sum\limits_{k=0}^\infty\binom{z}{k}x^k~,
\end{align}
where $\binom{z}{k}$ is the \emph{generalized binomial coefficient} defined by
\begin{align*}
	\binom{z}{k}=\frac{\Gamma(z+1)}{\Gamma(z-k+1)\Gamma(k+1)}=\frac{z(z-1)\dots(z-k+1)}{k!}~.
\end{align*}
Here, $\Gamma$ denotes the gamma function.  For $\vert x\vert<1$ the series converges absolutely for any $z\in\mathbb{C}$. If $\vert x\vert=1$ absolute convergence is given if and only if $Re(z)>0$, for $-1<Re(z)\leq 0$ the series converges if $x\neq-1$.

Lastly, the so-called \emph{generalized hypergeometric function} ${_pF_q}$ and the special \emph{Gauss hypergeometric function} ${}_2F_1$, which are  well known in mathematical physics, play important roles in the Fourier approximation approach. 

The generalized hypergeometric function ${}_pF_q$ with $p,q\in\mathbb{N}_0$ is defined by
\begin{align}
	\label{genhypergeom}
	{}_pF_q\left[a_1,\dots,a_p;b_1,\dots,b_q;z\right]=\sum\limits_{k=0}^\infty\frac{(a_1)_k\dots(a_p)_k}{(b_1)_k\dots(b_q)_k}\frac{z^k}{k!}
\end{align}
for any complex numbers $a_1,\dots,a_p,b_1,\dots,b_q\in\mathbb{C}$ and $z\in\mathbb{C}$, where $(\cdot)_n$ is called the \emph{Pochhammer symbol}, or \emph{rising factorial}, with 
\begin{align*}
	(a)_n=\begin{cases}
		1,&n=0,\\
		a(a+1)(a+2)\dots(a+n-1),&n\geq1.
	\end{cases}
\end{align*}
For $p\leq q$ the generalized hypergeometric function converges for all $z\in\mathbb{C}$, and for $p>q+1$ only if $z=0$. In the case $p=q+1$ the series converges if $\vert z\vert<1$, and when $z=1$ provided $Re\left(\sum b_i-\sum a_i\right)>0$ or when $z=-1$ provided $Re\left(\sum b_i-\sum a_i+1\right)>0$ \cite[p. 8]{hypergeometric}.

For the Gauss hypergeometric function ${}_2F_1$ it holds that
\begin{align*}
	{}_2F_1\left[a,b;c;z\right]=\sum\limits_{k=0}^\infty\frac{(a)_k(b)_k}{(c)_k}\frac{z^k}{k!}
\end{align*}
for $a,b,c,z\in\mathbb{C}$, and for $z=1$ provided $Re(c-a-b)>0$ the series converges absolutely with 
\begin{align}
	\label{gausshypgeomthm}
	{}_2F_1\left[a,b;c;1\right]=\sum\limits_{k=0}^\infty\frac{(a)_k(b_k)}{c_k}\frac{1}{k!}=\frac{\Gamma(c)\Gamma(c-a-b)}{\Gamma(c-a)\Gamma(c-b)}~.
\end{align}
This classical result is known as the Gauss hypergeometric theorem \cite[Section 1.3]{hypergeometric}. 
\section{Direct approach for $\alpha>1$}
\label{directapproach}

In \cite{glueckrothspodarev} the existence and uniqueness of a solution to integral equations of the form 
\begin{align*}
	w(y)=\int\limits_{\text{supp}(\gamma)}\beta(t)v\left(\gamma(t) y\right)dt
 \end{align*}
for given measurable functions $\beta,\gamma:\mathbb{R}^d\rightarrow\mathbb{R}$, and a weighted $L^2$-function $v$ on $\mathbb{R}$ is analyzed. The set $\text{supp}(\gamma)=\left\{t\in\mathbb{R}^d:\gamma(t)\neq 0\right\}$ denotes the support of $\gamma$. 
Their solution theory is based on operators on the multiplicative group on $\mathbb{R}^\times=\mathbb{R}\setminus\{0\}$. Since even functions are of interest, it suffices to consider the positive reals $\mathbb{R}_+$ only. 

Define the linear integral operator $\mathcal{G}:L^2\left(\mathbb{R}_+,x^cdx\right)\rightarrow L^2\left(\mathbb{R}_+,x^cdx\right)$ by
\begin{align}
	\label{G+}
	\mathcal{G}v(y)=\int\limits_{\text{supp}(\gamma)}\beta(t)v\left(\gamma(t) y\right)dt~,\quad y>0~,
\end{align}
where the functions $\beta$ and $\gamma$ are chosen such that the constant
\begin{align}
	C=\int\limits_{\text{supp}(\gamma)}\left\vert\beta(t)\right\vert\left\vert\gamma(t)\right\vert^{-\frac{c+1}{2}}dt<\infty
\end{align}
is finite. 
Furthermore, we introduce the function $\mu:\mathbb{R}_+\rightarrow\mathbb{C}$ given by
\begin{align}
	\label{mu+}
	\mu(x)=\int\limits_{\text{supp}(\gamma)}\beta(t)\vert\gamma(t)\vert^{-\frac{c+1}{2}}e^{i\log(x)\log\vert\gamma(t)\vert}dt~.
\end{align}
The function $\mu$ is bounded and its continuity follows from Lebesgue's dominated convergence theorem. 

The following lemma on the injectivity and surjectivity of the operator $\mathcal{G}$ can be derived from (\cite[Corollary 2.4]{glueckrothspodarev}) which states the results in the context of the multiplicative group $(\mathbb{R}^\times,\cdot)$.
\begin{lem}
	Assume that $C<\infty$, and let $\mu:\mathbb{R}\rightarrow\mathbb{C}$ be the bounded, continuous function defined in (\ref{mu+}). Then, the operator $\mathcal{G}:L^2\left(\mathbb{R}_+,x^cdx\right)\rightarrow L^2\left(\mathbb{R}_+,x^cdx\right)$ as defined in (\ref{G+}) is
	\begin{enumerate}[(i)]
		\item injective if and only if $\mu\neq0$ almost everywhere on $\mathbb{R}_+$ (with respect to the Lebesgue measure). 
		\item bijective if and only if $\inf\limits_{x\in\mathbb{R}_+}\left\vert\mu(x)\right\vert>0$.
	\end{enumerate}
\end{lem}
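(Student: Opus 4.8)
The plan is to exhibit $\mathcal{G}$ as equivalent, via isometric isomorphisms, to a multiplication operator, thereby reducing the injectivity and surjectivity questions to elementary facts about multiplication operators on $L^2$. This is exactly the $\mathbb{R}_+$-instance of \cite[Corollary 2.4]{glueckrothspodarev}, obtained by restricting the $(\mathbb{R}^\times,\cdot)$-result to even characters. The mechanism behind the equivalence is the change of variables $x=e^s$, which identifies $(\mathbb{R}_+,\cdot)$ with $(\mathbb{R},+)$ and $dx/x$ with Lebesgue measure, turning $\mathcal{G}$ from (\ref{G+}) into an ordinary convolution that is diagonalized by the Mellin-type transform $\mathcal{F}_+\mathcal{M}$.

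Concretely, I would first conjugate $\mathcal{G}$ by the similarity transform $\mathcal{M}$, passing from $L^2\left(\mathbb{R}_+,x^cdx\right)$ to $L^2\left(\mathbb{R}_+,\frac{dx}{x}\right)$. Writing $u=\mathcal{M}v$ and using the even extension of $v$ (so that $v(\gamma(t)y)=v(\vert\gamma(t)\vert y)$), a direct computation gives
\[
\mathcal{M}\mathcal{G}\mathcal{M}^{-1}u(y)=\int\limits_{\mathrm{supp}(\gamma)}\beta(t)\,\vert\gamma(t)\vert^{-(c+1)/2}\,u\left(\vert\gamma(t)\vert y\right)dt.
\]
Next I would apply $\mathcal{F}_+$ and invoke Fubini's theorem, whose use is licensed precisely by the finiteness assumption $C<\infty$. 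After the substitution $x=\vert\gamma(t)\vert y$ the inner integral produces the character factor $e^{i\log\vert\gamma(t)\vert\log(z)}$, and integrating over $t$ reproduces exactly the function (\ref{mu+}); hence
\[
\mathcal{F}_+\mathcal{M}\,\mathcal{G}\,\mathcal{M}^{-1}\mathcal{F}_+^{-1}w(z)=\mu(z)\,w(z).
\]

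Since $\mathcal{M}$ is an $L^2$-isometry between the two weighted spaces and $\mathcal{F}_+$ is, up to a constant, an $L^2$-isometry, both are topological isomorphisms, so $\mathcal{G}$ inherits the injectivity and bijectivity of the multiplication operator $M_\mu$. It then remains to recall the standard spectral description of multiplication operators: $M_\mu$ is injective on $L^2$ if and only if $\mu\neq0$ almost everywhere, which gives (i); and $M_\mu$ is bijective if and only if $\mu$ is bounded away from zero (then $1/\mu\in L^\infty$ furnishes the bounded inverse). Because $\mu$ is continuous, its essential infimum coincides with $\inf_{x\in\mathbb{R}_+}\vert\mu(x)\vert$, so this condition is precisely $\inf_{x\in\mathbb{R}_+}\vert\mu(x)\vert>0$, yielding (ii).

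I expect the only genuine difficulties to be bookkeeping rather than conceptual. The interchange of integration must be justified by $C<\infty$, and, more subtly, the even extension has to be tracked carefully so that $\gamma(t)$ enters everywhere through $\vert\gamma(t)\vert$; this is what makes $\mu$ depend only on $\vert\gamma\vert$ and guarantees that the even subspace of $L^2\left(\mathbb{R}^\times,\vert x\vert^cdx\right)$ is invariant under the corresponding operator on $(\mathbb{R}^\times,\cdot)$, so that the cited corollary specializes cleanly to the restriction considered here.
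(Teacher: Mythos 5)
Your proposal is correct and follows essentially the same route as the paper: the paper obtains this lemma by specializing \cite[Corollary 2.4]{glueckrothspodarev} from the multiplicative group $(\mathbb{R}^\times,\cdot)$ to even functions on $\mathbb{R}_+$, which is exactly the reduction you describe, and it gives no further proof details. The diagonalization $\mathcal{F}_+\mathcal{M}\,\mathcal{G}\,\mathcal{M}^{-1}\mathcal{F}_+^{-1}=\mu\,\cdot$ that you carry out, together with the standard injectivity/bijectivity criteria for multiplication operators, is precisely the mechanism behind the cited corollary, and it is the same computation the paper itself performs later in Theorem~\ref{thmdirectapproach}(ii) for the $\alpha$-sine kernel.
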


It is now possible to consider the $\alpha$-sine transform in the context of the integral operator $\mathcal{G}$. 
For all $y\in\mathbb{R}_+$, the transform $T_\alpha f$ can be reformulated by substituting $t=xy$ and setting $z=1/y$ to
\begin{align*}
	T_\alpha f(y)=\int\limits_0^\infty \left\vert\sin\left(xy\right)\right\vert^\alpha f(x)dx=z\int\limits_0^\infty \left\vert\sin\left(t\right)\right\vert^\alpha f(tz)dt=z\mathcal{G}_+v(z)~.
\end{align*}
with $\beta(t)=\left\vert\sin\left(t\right)\right\vert^\alpha$, $\gamma(t)=t$ with $supp(\gamma)=\mathbb{R}_+$ and $v=f$.
With $g=T_\alpha f$ this yields the equation
\begin{align}
\label{inteq}
	\mathcal{G}f(z)=z^{-1}g(z^{-1})
\end{align}
for all $z\in\mathbb{R}_+$. Plugging in the functions $\beta$ and $\gamma$ into the definitions of $\mathcal{G}$ and $\mu$, we can state the following theorem similar to \cite[Proposition 2.1, Theorem 2.3]{glueckrothspodarev}:
\begin{thm}
	\label{thmdirectapproach}
	Let $c=1+\delta$ with $\delta>0$ such that $\alpha\geq1+\delta/2$. Then 
	\begin{align}
	\label{intcondC+}
		C=\int\limits_0^\infty \left\vert\sin\left(t\right)\right\vert^\alpha t^{-\frac{c+1}{2}}dt<\infty~,
	\end{align}
	and
	\begin{enumerate}[(i)]
		\item 
		the linear operator $\mathcal{G}:L^2\left(\mathbb{R}_+,x^cdx\right)\rightarrow L^2\left(\mathbb{R}_+,x^cdx\right)$ given by
		\begin{align}
		\label{G+2}
			\mathcal{G}v(y)=\int\limits_0^\infty \left\vert\sin\left(t\right)\right\vert^\alpha v(ty)dt~,\quad y>0~,
		\end{align}
		is bounded on $L^2\left(\mathbb{R}_+,x^cdx\right)$ with operator norm $\Vert\mathcal{G}\Vert\leq C$. 
		\item for all functions $v\in L^2\left(\mathbb{R}_+,\frac{dx}{x}\right)$ the equation 
		$
		\tilde{\mathcal{G}}v=\mu v
		$
		holds, 
		where 
		$
		\tilde{\mathcal{G}}=\mathcal{F_+MGM}^{-1}\mathcal{F}_+^{-1}
		$
		and $\mu:\mathbb{R_+}\rightarrow\mathbb{C}$ with
		\begin{align}
			\label{mu+2}			
			\mu(x)=\int\limits_0^\infty \left\vert\sin\left(t\right)\right\vert^\alpha t^{-\frac{c+1}{2}}e^{i\log(t)\log(x)}dt~,\quad x>0~.
		\end{align}
	\end{enumerate}
\end{thm}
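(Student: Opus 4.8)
The plan is to establish the three assertions in turn: the finiteness of $C$, the norm bound in (i), and the diagonalization in (ii). First I would verify $C<\infty$. Writing $c=1+\delta$ gives $(c+1)/2=1+\delta/2$, so the integrand is $|\sin t|^\alpha t^{-(1+\delta/2)}$, and I would split the integral at $t=1$. On $(0,1]$ I would use $|\sin t|\le t$ together with the hypothesis $\alpha\ge 1+\delta/2$: then $|\sin t|^\alpha t^{-(1+\delta/2)}\le t^{\alpha-1-\delta/2}$ with exponent $\alpha-1-\delta/2\ge 0$, so the integrand is bounded on $(0,1]$ and its contribution is finite. On $[1,\infty)$ I would use $|\sin t|\le 1$, leaving $\int_1^\infty t^{-(1+\delta/2)}\,dt$, which converges precisely because $\delta>0$. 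Thus $C<\infty$, and since the modulus of the integrand defining $\mu$ in (\ref{mu+2}) is the same, this simultaneously shows that $\mu$ is a well-defined, bounded function.

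For part (i) the plan is to bound $\mathcal{G}$ directly via Minkowski's integral inequality. Applying it to the definition (\ref{G+2}) gives
\[
\|\mathcal{G}v\|_{L^2(\mathbb{R}_+,x^c dx)}\le \int_0^\infty |\sin t|^\alpha\,\|v(t\,\cdot)\|_{L^2(\mathbb{R}_+,x^c dx)}\,dt .
\]
A change of variables $u=tx$ in the inner norm yields the dilation identity $\|v(t\,\cdot)\|_{L^2(\mathbb{R}_+,x^c dx)}=t^{-(c+1)/2}\|v\|_{L^2(\mathbb{R}_+,x^c dx)}$; substituting this back reproduces exactly the constant $C$, whence $\|\mathcal{G}\|\le C$. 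This is consistent with the operator-norm bound furnished by the general theory in \cite{glueckrothspodarev}.

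For part (ii) I would diagonalize $\mathcal{G}$ through the multiplicative-group Fourier transform. I would first note that $\mathcal{M}\colon L^2(\mathbb{R}_+,x^c dx)\to L^2(\mathbb{R}_+,dx/x)$ is an isometry, so $\mathcal{H}:=\mathcal{M}\mathcal{G}\mathcal{M}^{-1}$ is bounded on $L^2(\mathbb{R}_+,dx/x)$. Inserting the definitions of $\mathcal{M}^{-1}$ and $\mathcal{M}$, the weight powers telescope ($y^{(c+1)/2}(ty)^{-(c+1)/2}=t^{-(c+1)/2}$) and one obtains
\[
\mathcal{H}w(y)=\int_0^\infty |\sin t|^\alpha t^{-(c+1)/2}\,w(ty)\,dt .
\]
After the substitution $s=ty$ this is a multiplicative convolution on $(\mathbb{R}_+,\cdot)$ against the Haar measure $dr/r$ with kernel $K(r)=|\sin r|^\alpha r^{(1-c)/2}$, for which $\|K\|_{L^1(\mathbb{R}_+,dr/r)}=C$. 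I would then apply the convolution theorem for $\mathcal{F}_+$: since $\mathcal{F}_+$ sends multiplicative convolution to pointwise multiplication by the transform of the kernel, conjugating $\mathcal{H}$ by $\mathcal{F}_+$ yields multiplication by $\hat K$, and unwinding the characters $e^{\pm i\log(\cdot)\log(\cdot)}$ identifies $\hat K$ with the function $\mu$ of (\ref{mu+2}). Hence $\tilde{\mathcal{G}}v=\mu v$.

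The main obstacle will be the rigorous justification of this convolution-theorem step. The formal interchange of the order of integration when computing $\mathcal{F}_+\mathcal{H}\mathcal{F}_+^{-1}$ is licensed precisely by $K\in L^1(\mathbb{R}_+,dr/r)$, which is exactly why the finiteness of $C$ is the crucial hypothesis; this is the content borrowed from \cite[Proposition 2.1, Theorem 2.3]{glueckrothspodarev}. Since the pointwise identity is first obtained only on a dense subclass of sufficiently regular $v$, I would conclude by extending it to all $v\in L^2(\mathbb{R}_+,dx/x)$ by density, using that $\mathcal{F}_+$ is unitary up to the constant $2\pi$ and that both $\tilde{\mathcal{G}}$ and multiplication by the bounded function $\mu$ are continuous on $L^2(\mathbb{R}_+,dx/x)$.
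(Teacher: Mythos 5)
Your proposal is correct, and it is noticeably more self-contained than the paper's own proof, which is terse: part (i) (and, implicitly, the finiteness of $C$) is disposed of by citing \cite[Proposition 2.1]{glueckrothspodarev}, and part (ii) is proved by a single explicit computation. Your splitting of the integral defining $C$ at $t=1$ (using $\vert\sin t\vert\le t$ with $\alpha\ge 1+\delta/2$ near the origin and $\vert\sin t\vert\le 1$ at infinity) and your Minkowski-plus-dilation argument for (i) (with the scaling identity $\Vert v(t\,\cdot)\Vert_{L^2(x^c dx)}=t^{-(c+1)/2}\Vert v\Vert_{L^2(x^c dx)}$) supply exactly the content the paper outsources to the reference. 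For (ii) the two routes share the same mathematical core but are packaged differently: the paper notes that it suffices to verify the identity on functions of the form $\mathcal{F}_+\mathcal{M}u$ (implicitly using that $\mathcal{F}_+\mathcal{M}$ maps onto $L^2\left(\mathbb{R}_+,\frac{dx}{x}\right)$) and then shows $\mathcal{F}_+\mathcal{M}\mathcal{G}u=\mu\,\mathcal{F}_+\mathcal{M}u$ by one Fubini computation with the substitution $x=s/t$ --- in effect proving the multiplicative convolution theorem by hand in this instance; you instead conjugate first by $\mathcal{M}$, exhibit $\mathcal{M}\mathcal{G}\mathcal{M}^{-1}$ as convolution on $(\mathbb{R}_+,\cdot)$ with a kernel of $L^1\left(\mathbb{R}_+,\frac{dr}{r}\right)$-norm $C$, and invoke the abstract convolution theorem, closing with a density argument. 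Your version makes explicit two points the paper leaves tacit (the justification of Fubini via integrability of the kernel, and the extension from a regular dense class to all of $L^2\left(\mathbb{R}_+,\frac{dx}{x}\right)$), at the cost of one bookkeeping subtlety you only gesture at and should state precisely: with the paper's convention $\mathcal{F}_+v(y)=\int_0^\infty e^{-i\log(x)\log(y)}v(x)\frac{dx}{x}$, the operator $\mathcal{M}\mathcal{G}\mathcal{M}^{-1}$ is convolution with the \emph{reflected} kernel $\tilde K(r)=K(1/r)$, and it is $\mathcal{F}_+\tilde K$, not $\mathcal{F}_+K$, that equals $\mu$ --- the reflection is what produces the $+$ sign in the exponential $e^{i\log(t)\log(x)}$ appearing in (\ref{mu+2}).
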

\begin{proof}
	The application of \cite[Proposition 2.1]{glueckrothspodarev} yields (i). For (ii) note that  $\tilde{\mathcal{G}}u=\tilde{\mu}u$ holds if and only if $\tilde{\mathcal{G}}\mathcal{F_+M}u=\tilde{\mu}\mathcal{F_+M}u$ is true for all $u\in L^2\left(\mathbb{R}_+,\frac{dx}{x}\right)$. For $y>0$
	\begin{align*}
		\mathcal{F_+M}u(y)=\int\limits_0^\infty x^{\frac{c+1}{2}}u(x)e^{-i\log(x)\log(y)}\frac{dx}{x}=\int\limits_0^\infty x^{\frac{c-1}{2}}u(x)e^{-i\log(x)\log(y)}dx~,
	\end{align*}
	and hence
	\begin{align*}
		\tilde{\mathcal{G}}\mathcal{F_+M}u(y)&=\mathcal{F_+M_+G_+}u(y)
		=\int\limits_0^\infty x^{\frac{c-1}{2}}\left[~\int\limits_0^\infty \left\vert\sin\left(t\right)\right\vert^\alpha u(tx)dt\right]e^{-i\log(x)\log(y)}dx\\
		&=\int\limits_0^\infty \int\limits_0^\infty s^{\frac{c-1}{2}}t^{-\frac{c-1}{2}}\left\vert\sin\left(t\right)\right\vert^\alpha u(s)e^{-i\log(s/t)\log(y)}\frac{ds}{t}dt\\
		&=\int\limits_0^\infty \left\vert\sin\left(t\right)\right\vert^\alpha t^{-\frac{c+1}{2}}e^{i\log(t)\log(y)}dt\int\limits_0^\infty s^{\frac{c-1}{2}}u(s)e^{-i\log(s)\log(y)}st
		=\mu(y)\mathcal{F_+M}u(y)
	\end{align*}
	using Fubini's theorem and the substitution $x=s/t$.
\end{proof}

We can now state an inversion formula for the operator $\mathcal{G}_+$ solving Equation (\ref{inteq}).
\begin{cor}
	\label{final_cor}
	Let $f\in L^1\left(\mathbb{R}_+,dx\right)\cap L^2\left(\mathbb{R}_+,x^cdx\right)$, where $c=1+\delta$ for some $\delta>0$ with $\alpha\geq1+\delta/2$. 
	Assume that the function $z^{-1}g(z^{-1})$ belongs to the space $L^2\left(\mathbb{R}_+, x^cdx\right)$. Then 
	\begin{align}
		\label{invformula}
	f(x)=\left(\mathcal{M}^{-1}\mathcal{F}_+^{-1}\frac{1}{\mu}\mathcal{F_+M}\left[z^{-1}g\left(z^{-1}\right)\right]\right)(x)
	\end{align}
	for all $x\in\mathbb{R}_+$. 
	The operator $\mathcal{G}$ on $L^2\left(\mathbb{R}_+,x^cdx\right)$ is injective, as $\mu\neq 0$ almost everywhere on $\mathbb{R}_+$, but not surjective since $\inf_{x>0}\vert\mu(x)\vert=0$.
\end{cor}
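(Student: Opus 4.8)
The plan is to read the inversion formula directly off the intertwining relation in Theorem~\ref{thmdirectapproach}(ii), and then to establish the two spectral facts about $\mu$ separately. For the formula I start from Equation~(\ref{inteq}), which reads $\mathcal{G}f = h$ with $h(z) = z^{-1}g(z^{-1})$. By hypothesis both $f$ and $h$ lie in $L^2\left(\mathbb{R}_+,x^cdx\right)$, so that $\mathcal{F_+M}f$ and $\mathcal{F_+M}h$ are well-defined elements of $L^2\left(\mathbb{R}_+,\frac{dx}{x}\right)$. Applying $\mathcal{F_+M}$ to both sides of $\mathcal{G}f = h$ and inserting the identity $\mathcal{M}^{-1}\mathcal{F}_+^{-1}\mathcal{F_+M} = \mathrm{id}$, I obtain $\tilde{\mathcal{G}}\left(\mathcal{F_+M}f\right) = \mathcal{F_+M}h$, where $\tilde{\mathcal{G}} = \mathcal{F_+MGM}^{-1}\mathcal{F}_+^{-1}$. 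Theorem~\ref{thmdirectapproach}(ii) turns the left-hand side into the pointwise product $\mu\cdot\mathcal{F_+M}f$, so that $\mu\,\mathcal{F_+M}f = \mathcal{F_+M}h$ as an identity in $L^2\left(\mathbb{R}_+,\frac{dx}{x}\right)$. Once $\mu\neq0$ almost everywhere is known, I may divide by $\mu$ and apply $\mathcal{M}^{-1}\mathcal{F}_+^{-1}$ to recover $f = \mathcal{M}^{-1}\mathcal{F}_+^{-1}\frac{1}{\mu}\mathcal{F_+M}\left[z^{-1}g(z^{-1})\right]$, which is exactly~(\ref{invformula}); here $\frac{1}{\mu}\mathcal{F_+M}h$ automatically lies in $L^2\left(\mathbb{R}_+,\frac{dx}{x}\right)$ because it equals $\mathcal{F_+M}f$, and the resulting identity holds in $L^2$, hence almost everywhere.

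So the entire statement reduces to the two properties of $\mu$, which I would handle by rewriting~(\ref{mu+2}) as a genuine Fourier transform. Substituting $t = e^s$ in~(\ref{mu+2}) gives $\mu(x) = \int_{-\infty}^{\infty}\psi(s)e^{is\log x}\,ds$ with $\psi(s) = \vert\sin(e^s)\vert^\alpha e^{s(1-c)/2}$, so that $\mu = \hat{\psi}\circ\log$, where $\hat{\psi}$ is the Fourier transform of $\psi$ in the $s$-variable. The nonnegative function $\psi$ is integrable, since $\int_{\mathbb{R}}\psi(s)\,ds = C < \infty$ by~(\ref{intcondC+}); in particular $\mu(1) = \hat{\psi}(0) = C > 0$, so $\mu$ is not identically zero. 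The key observation is that $\psi$ decays exponentially at both ends of the real line: as $s\to+\infty$ the factor $e^{s(1-c)/2} = e^{-s\delta/2}$ forces decay, while as $s\to-\infty$ one has $\vert\sin(e^s)\vert^\alpha\sim e^{s\alpha}$, so $\psi(s)$ behaves like $e^{s(\alpha-\delta/2)}$ with exponent $\alpha-\delta/2\geq1>0$ by the standing assumption $\alpha\geq1+\delta/2$. Consequently $\hat{\psi}$ extends to a function holomorphic in a horizontal strip containing $\mathbb{R}$; being holomorphic and not identically zero, its zero set on the real line is discrete, and therefore $\mu\neq0$ almost everywhere on $\mathbb{R}_+$. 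By the preceding lemma (on injectivity and bijectivity of $\mathcal{G}$) this yields injectivity of $\mathcal{G}$ and legitimizes the division by $\mu$ carried out above.

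Finally, for non-surjectivity I only need $\inf_{x>0}\vert\mu(x)\vert = 0$. Since $\psi\in L^1(\mathbb{R})$, the Riemann--Lebesgue lemma gives $\hat{\psi}(\omega)\to0$ as $\omega\to\pm\infty$, that is $\mu(x)\to0$ as $x\to0^+$ and as $x\to\infty$; hence $\inf_{x>0}\vert\mu(x)\vert = 0$ and the same lemma rules out bijectivity. As $\mathcal{G}$ is already injective, this is precisely the failure of surjectivity. I expect the genuinely delicate point to be the claim $\mu\neq0$ a.e.: verifying the two-sided exponential decay of $\psi$, which is exactly where the parameter window $\alpha\geq1+\delta/2$ and $c=1+\delta$ enters, and then invoking analyticity to pass from ``not identically zero'' to ``nonzero almost everywhere'', is the crux, whereas the operator manipulations and the Riemann--Lebesgue step are routine.
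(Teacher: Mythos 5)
Your proof is correct, and for the inversion formula itself it follows the paper's route: like the paper, you read (\ref{invformula}) off the intertwining relation $\tilde{\mathcal{G}}v=\mu v$ of Theorem~\ref{thmdirectapproach}(ii), merely spelling out the conjugation by $\mathcal{F_+M}$ that the paper compresses into ``direct consequence of Theorem~\ref{thmdirectapproach}''. Where you genuinely diverge is in the treatment of the two spectral facts about $\mu$. For $\mu\neq 0$ a.e.\ the paper offers no argument at all (``it is easy to see''), whereas you give a complete proof: writing $\mu=\hat\psi\circ\log$ with $\psi(s)=\vert\sin(e^s)\vert^\alpha e^{s(1-c)/2}$, checking two-sided exponential decay (exponents $-\delta/2$ and $\alpha-\delta/2\geq 1$, which is precisely where the hypothesis $\alpha\geq 1+\delta/2$ enters), extending $\hat\psi$ holomorphically to a strip around $\mathbb{R}$, and concluding that its real zeros are isolated, hence a null set; this fills a genuine gap in the paper. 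For $\inf_{x>0}\vert\mu(x)\vert=0$, the paper evaluates explicit majorant integrals, but its key step --- bounding $\left\vert\int_0^1\vert\sin t\vert^\alpha t^{-\frac{c+1}{2}+i\log x}\,dt\right\vert$ by $\left\vert\int_0^1 t^{\alpha-\frac{c+1}{2}+i\log x}\,dt\right\vert$, and similarly on $[1,\infty)$ --- is not a legitimate inequality, since a pointwise bound $0\leq f\leq g$ does not survive inside the modulus of an oscillatory integral. Your Riemann--Lebesgue argument applied to $\psi\in L^1(\mathbb{R})$ avoids this issue entirely and is rigorous; what you give up relative to the paper's intended computation is only a quantitative decay rate of order $1/\vert\log x\vert$, which the corollary does not need. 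Altogether, both spectral facts in your write-up flow from the single observation that $\mu$ is the Fourier transform of an integrable, exponentially decaying function in logarithmic coordinates, which is a cleaner and in fact sounder treatment than the paper's own.
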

\begin{proof}
	The inversion formula (\ref{invformula}) is a direct consequence of Theorem \ref{thmdirectapproach}. It is easy to see that $\mu\neq0$ almost everywhere on $\mathbb{R}_+$. To show that $\mathcal{G}$ is not surjectiv, i.e. $\inf\limits_{x>0}\vert\mu(x)\vert=0$, note that $1<\frac{c+1}{2}\leq\alpha$ holds, and one can compute 
	\begin{align*}
		\vert\mu(x)\vert&=\left\vert\int\limits_0^\infty \left\vert\sin\left(t\right)\right\vert^\alpha t^{-\frac{c+1}{2}}e^{i\log(t)\log(x)}dt\right\vert\leq\left\vert\int\limits_0^1 \left\vert\sin\left(t\right)\right\vert^\alpha t^{-\frac{c+1}{2}+i\log(x)}dt\right\vert+\left\vert\int\limits_1^\infty \left\vert\sin\left(t\right)\right\vert^\alpha t^{-\frac{c+1}{2}+i\log(x)}dt\right\vert\\
		&\leq\left\vert\int\limits_0^1 t^{\alpha-\frac{c+1}{2}+i\log(x)}dt\right\vert+\left\vert\int\limits_1^\infty t^{-\frac{c+1}{2}+i\log(x)}dt\right\vert=\frac{1}{\left\vert\alpha-\frac{c+1}{2}+1+i\log(x)\right\vert}+\frac{1}{\left\vert\frac{c+1}{2}-1-i\log(x)\right\vert}\longrightarrow 0
	\end{align*}
as $x\rightarrow\infty$. 
\end{proof}

Corollary \ref{final_cor} gives an inversion formula which computes the solution of the integral equation $g=Tf$ directly. We will later see though that the involved operators $\mathcal{F_+}$, $\mathcal{M}$ and the function $\mu$ are numerically unstable, and inversion results are rather unsatisfying in practice. An even more significant drawback is the restriction to $\alpha>1$. The numerical analysis of the direct approach can be found in Section \ref{numDA}.

\section{Fourier approximation approach}
\label{Ffapprox}

Recall the space $L^1_{w,\alpha}(\mathbb{R}_+)$ defined in the preliminaries by 
\begin{align*}
	L^1_{w,\alpha}(\mathbb{R}_+)=\begin{cases}
		L^1(\mathbb{R}_+),&\alpha\geq0,\\
		L^1(\mathbb{R}_+)\cap L^1\left(\mathbb{R}_+,\max\left\{\frac{1}{x},1\right\}dx\right),&-1<\alpha<0.
	\end{cases}
\end{align*}
\begin{rem}
	In the following, the even extension of $f\in L^1_{w,\alpha}(\mathbb{R}_+)$ to the whole real line is also denoted by $f$, whenever we consider the Fourier transform $\mathcal{F}f$.  
\end{rem}

Consider the transform 
$
	T_2 f(y)=\int_0^\infty\left\vert\sin\left(xy\right)\right\vert^2 f(x)dx.
$
Applying the cosine double angle formula  $\cos(2x)=1-2\sin^2(x)$ yields $$T_2f(y)=\frac{\mathcal{F}f(0)}{4}-\frac{\mathcal{F}f(2y)}{4}.$$ Assuming the constant $\mathcal{F}f(0)$ is known, the inversion of $T_2$ is simply achieved by applying the Fourier inverse transform to $\mathcal{F}f(0)-2T_2f$.
But for $\alpha>-1, \alpha\neq 2$, the cosine double angle formula alone does not help a lot. 

In following section, we first prove a series representation of $T_\alpha $. Section \ref{Fourierapproximation} then establishes the Fourier approximation approach, where the Fourier transform $\mathcal{F}f$ is approximated from the transform $T_\alpha f$. Lastly, Section \ref{BLM} presents an interpolation method from which the function $f$ is computed.

\subsection{Series representation}

\begin{thm}
	\label{prop2}
	\begin{enumerate}[(i)]
		\item Let $\alpha>-1$. The function $\frac{1}{2}\left\vert\sin\left(\frac{x}{2}\right)\right\vert^\alpha$ admits the  Fourier series expansion 
		\begin{align*}
			\frac{1}{2}\left\vert\sin\left(\frac{x}{2}\right)\right\vert^\alpha=\frac{c_0}{2}+\sum\limits_{j=1}^\infty c_j\cos(jx)
		\end{align*}
		with real Fourier coefficients 
		\begin{align}
			\label{cjdef}
			c_j=\begin{cases}
				\frac{(-1)^j}{2^{\alpha}}\frac{\Gamma\left(1+\alpha\right)}{\Gamma\left(\frac{\alpha}{2}-j+1\right)\Gamma\left(\frac{\alpha}{2}+j+1\right)},&\alpha\neq 2k, ~j\in\mathbb{N}_0,\\
				\frac{(-1)^j}{4^k}\binom{2k}{k-j},&\alpha=2k,~j=0,\dots,k,~k\in\mathbb{N}_0,\\
				0,&\alpha=2k,~j>k,~k\in\mathbb{N}_0.
			\end{cases}
		\end{align}
	The Fourier series converges absolutely and uniformly on $\mathbb{R}$ for $\alpha\geq 0$. It converges almost everywhere on $\mathbb{R}$ ~ for $-1<\alpha<0$.
	\item For any integrable function $f\in L^1_{w,\alpha}(\mathbb{R}_+)$, $\alpha>-1$, the integral transform $T_\alpha f(y)=\int_0^\infty\left\vert\sin\left(xy\right)\right\vert^\alpha f(x)dx$ has the series representation 
	\begin{align}
		\label{TfFourier2}
		T_\alpha f\left(y\right)=\frac{c_0}{2}\mathcal{F}f(0)+\sum\limits_{j=1}^\infty c_j\mathcal{F}f(2jy).
	\end{align}
	In the case $\alpha=2k$, $k\in\mathbb{N}_0$, the above infinite series in expression (\ref{TfFourier2}) becomes a finite sum, i.e.
	$$T_{\alpha}f(y)=\frac{c_0}{2}\mathcal{F}f(0)+c_1\mathcal{F}f(2y)+c_2\mathcal{F}f(4y)+\dots+c_{k}\mathcal{F}f\left(2k y\right).$$
	\end{enumerate}
\end{thm}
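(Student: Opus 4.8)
The plan is to handle the two parts in order: first pin down the coefficients (\ref{cjdef}), then feed the resulting Fourier series into the integral defining $T_\alpha f$ to obtain (\ref{TfFourier2}).

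For part (i) I would start from the observation that $h(x):=\tfrac12\left|\sin(x/2)\right|^\alpha$ is even, $2\pi$-periodic, and integrable over a period (the singularity at $x\in2\pi\mathbb{Z}$ is integrable precisely because $\alpha>-1$), so it has a pure cosine series with $c_j=\tfrac1\pi\int_{-\pi}^{\pi}h(x)\cos(jx)\,dx$. The key algebraic step is the identity $\left|\sin(x/2)\right|=\tfrac12\left|1-e^{ix}\right|$, which gives $h(x)=2^{-\alpha-1}(1-e^{ix})^{\alpha/2}(1-e^{-ix})^{\alpha/2}$. I would then expand each factor with the generalized binomial series (\ref{binomseries}) at exponent $\alpha/2$; on $\left|e^{ix}\right|=1$ this converges absolutely for $\alpha>0$ and conditionally for $-1<\alpha<0$ away from $x\in2\pi\mathbb{Z}$, exactly as recorded after (\ref{binomseries}). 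Collecting the coefficient of $e^{ijx}$ in the product yields $c_j=2^{-\alpha}(-1)^j\sum_{n\ge0}\binom{\alpha/2}{n}\binom{\alpha/2}{n+j}$.

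The next step evaluates this sum. Computing the ratio of consecutive terms shows the series equals $\binom{\alpha/2}{j}\,{}_2F_1[-\tfrac\alpha2,\,j-\tfrac\alpha2;\,j+1;\,1]$; since the parameters satisfy $\mathrm{Re}(c-a-b)=1+\alpha>0$, the Gauss hypergeometric theorem (\ref{gausshypgeomthm}) applies, and after cancelling $\Gamma(\tfrac\alpha2+1)$ and $\Gamma(j+1)$ this produces the closed form in the first line of (\ref{cjdef}). For $\alpha=2k$ the factors $(1-e^{\pm ix})^{k}$ are polynomials, so $h$ is a trigonometric polynomial supported on $|j|\le k$: the $\Gamma$-formula develops a pole in $\Gamma(k-j+1)$ for $j>k$ (hence $c_j=0$), while for $j\le k$ the sum is finite and Vandermonde's identity $\sum_n\binom{k}{n}\binom{k}{k-n-j}=\binom{2k}{k-j}$ gives the stated $\tfrac{(-1)^j}{4^k}\binom{2k}{k-j}$, recovering the remaining two lines. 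For the convergence claims I would, when $\alpha\ge0$, extract the asymptotics $|c_j|\sim C\,j^{-1-\alpha}$ via the reflection formula applied to $\Gamma(\tfrac\alpha2-j+1)$ together with Stirling, so that $\sum_j|c_j|<\infty$ and the Weierstrass $M$-test gives absolute and uniform convergence; when $-1<\alpha<0$ this fails, so I would instead invoke the localization principle, noting $h$ is real-analytic on $\mathbb{R}\setminus2\pi\mathbb{Z}$ and hence satisfies the Dini criterion at every such point, yielding convergence almost everywhere.

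For part (ii) I would substitute $x\mapsto 2xy$ in the expansion of (i) to get $\left|\sin(xy)\right|^\alpha=c_0+2\sum_{j\ge1}c_j\cos(2jxy)$, multiply by $f$, integrate over $\mathbb{R}_+$, and use that the even extension of $f$ satisfies $2\int_0^\infty\cos(2jxy)f(x)\,dx=\mathcal{F}f(2jy)$ and $2\int_0^\infty f(x)\,dx=\mathcal{F}f(0)$; term-by-term this is exactly (\ref{TfFourier2}), degenerating to the finite sum when $\alpha=2k$. The one genuinely delicate point, and the main obstacle, is justifying the interchange of summation and integration (and the term-by-term integration already used to compute $c_j$) in the range $-1<\alpha<0$: there $\sum_j|c_j|=\infty$, so Tonelli is unavailable and the partial sums are not uniformly bounded. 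For $\alpha\ge0$ the interchange is immediate, since uniform convergence bounds the partial sums by $\sum_j|c_j|<\infty$ and $f\in L^1(\mathbb{R}_+)$ supplies an integrable majorant. For $-1<\alpha<0$ I would insert an Abel factor $r^j$ to make every series absolutely convergent, pass the resulting identity through the integral, and let $r\to1^-$, controlling the limit by the $L^1$-convergence of the Poisson means of $\left|\sin(xy)\right|^\alpha$ together with the weighted integrability of $f$ encoded in $L^1_{w,\alpha}(\mathbb{R}_+)$ (cf.\ Lemma \ref{lemma1}), which tames the algebraic singularity near the origin.
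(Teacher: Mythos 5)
Your part (i) is correct but follows a genuinely different route from the paper. The paper writes $\frac{1}{2}\left\vert\sin\left(x/2\right)\right\vert^\alpha=2^{-\alpha/2-1}\left(1-\cos x\right)^{\alpha/2}$, expands via the binomial series (\ref{binomseries}) in powers of $\cos x$, converts each $\cos^k x$ into a combination of $\cos(mx)$ by the cosine power formulas, rearranges the double series, and evaluates the resulting coefficient sums by two applications of Gauss's theorem (\ref{gausshypgeomthm}) plus Legendre duplication. Your factorization $\left\vert\sin\left(x/2\right)\right\vert^\alpha=2^{-\alpha}\left((1-e^{ix})(1-e^{-ix})\right)^{\alpha/2}$ (valid with principal branches, since the two factors are conjugates lying in the right half-plane) replaces all of this by a single diagonal summation and one application of Gauss's theorem: indeed $\binom{\alpha/2}{j}\,{}_2F_1\left[-\frac{\alpha}{2},\,j-\frac{\alpha}{2};\,j+1;\,1\right]$ collapses exactly to the closed form in (\ref{cjdef}), and Vandermonde handles $\alpha=2k$. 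This is shorter and cleaner; your asymptotics $\vert c_j\vert\sim Cj^{-1-\alpha}$ (which the paper only derives later, in Corollary \ref{corollary_ck}) and the Dini/localization argument for $-1<\alpha<0$ are also sound. One point you should make explicit: for $-1<\alpha<0$ you multiply two conditionally convergent series and regroup diagonally; this is justified because the coefficient sequences $\binom{\alpha/2}{n}(-1)^n$ are square-summable precisely when $\alpha>-1$, so both factors are $L^2$ on the circle, their product is $L^1$, and the Fourier coefficients of a product of two $L^2$ functions are the absolutely convergent convolutions of the coefficient sequences (Cauchy--Schwarz). The paper's own rearrangement has the same unstated issue in this range.

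The genuine gap is in part (ii) for $-1<\alpha<0$, which you rightly flag as the delicate point but whose proposed fix does not close it. Inserting the Abel factor $r^j$ and letting $r\to1^-$ can, with the domination you describe (the Poisson means of the non-negative kernel obey the same period-wise $L^1$ bound used in Lemma \ref{lemma1}, uniformly in $r$), show that the Abel means $\sum_j c_jr^j\mathcal{F}f(2jy)$ converge to $T_\alpha f(y)$. But (\ref{TfFourier2}) asserts equality with the series itself, and Abel summability does not imply convergence of the series: here $\vert c_j\vert\sim Cj^{-1-\alpha}$ is not summable, and Riemann--Lebesgue gives $\mathcal{F}f(2jy)\to0$ with no rate, so no standard Tauberian condition ($o(1/j)$ or $O(1/j)$) is available. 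As sketched, your argument establishes the Abel-summed identity, not the stated one; you would need to prove convergence of $\sum_j c_j\mathcal{F}f(2jy)$ separately (or reinterpret the identity in a summability or $L^2$ sense). In fairness, the paper's own proof is equally thin at this point: it invokes dominated convergence without exhibiting a dominating function for the partial sums, and effectively retreats in Corollary \ref{corollary_ck}(ii) to $L^2$-convergence under the extra hypothesis $f\in L^2(\mathbb{R}_+)$. For $\alpha\geq0$ your part (ii) — uniform bound $\sum_j\vert c_j\vert<\infty$ plus $f\in L^1$ justifying the interchange — is complete and coincides with the paper's argument.
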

\begin{proof}
\begin{enumerate}[(i)]
\item Let $\alpha>-1$. First of all, note that
\begin{align*}
	\frac{1}{2}\left\vert\sin\left(\frac{x}{2}\right)\right\vert^\alpha&=\frac{1}{2}\left(\sin^2\left(\frac{x}{2}\right)\right)^{\alpha/2}=\frac{1}{2^{\alpha/2+1}}\left(1-\cos\left(x\right)\right)^{\alpha/2}
	=\frac{1}{2^{\alpha/2+1}}\sum\limits_{k=0}^\infty\binom{\alpha/2}{k}(-1)^k\cos^k(x)
\end{align*}
for all $x,y\in\mathbb{R}$ by the binomial series (\ref{binomseries}). Since $\vert\cos(x)\vert<1$ almost everywhere on $\mathbb{R}$ the series above converges absolutely almost everywhere for any $\alpha>-1$. Splitting the series into an odd and even summands and 
applying the cosine power formulae
\begin{align*}
	\cos^k(2xy)
	&=\begin{cases}
		\frac{2}{2^{2n-1}}\sum\limits_{l=0}^{n-1}\binom{2n-1}{l}\cos\left((2n-1-2l)2xy\right)~,&k=2n-1,\\[3ex]
		\frac{1}{2^{2n}}\binom{2n}{n}+\frac{2}{2^{2n}}\sum\limits_{l=0}^{n-1}\binom{2n}{l}\cos\left((2n-2l)2xy\right)~,&k=2n,
	\end{cases}\hspace{5ex}~n\in\mathbb{N},
\end{align*}
yields
\begin{align*}
	\frac{1}{2}\left\vert\sin\left(\frac{x}{2}\right)\right\vert^\alpha&=\frac{1}{2^{\alpha/2+1}}\left(1-\sum\limits_{n=1}^\infty\binom{\alpha/2}{2n-1}\cos^{2n-1}(x)+\sum\limits_{n=1}^\infty\binom{\alpha/2}{2n}\cos^{2n}(x)\right)\\
	&=\frac{1}{2^{\alpha/2+1}}\left(1-\sum\limits_{n=1}^\infty\binom{\alpha/2}{2n-1}\frac{2}{2^{2n-1}}\sum\limits_{l=0}^{n-1}\binom{2n-1}{l}\cos((2n-1-2l)x)
	\right.\\
	&\left.\hspace{10ex}
	+\sum\limits_{n=1}^\infty\binom{\alpha/2}{2n}\frac{1}{2^{2n}}\binom{2n}{n}+\sum\limits_{n=1}^\infty\binom{\alpha/2}{2n}\frac{2}{2^{2n}}\sum\limits_{l=0}^{n-1}\binom{2n}{l}\cos((2n-2l)x)\right)\\
	&=\frac{1}{2^{\alpha/2+1}}\left(\sum\limits_{n=0}^\infty\binom{\alpha/2}{2n}\binom{2n}{n}\frac{1}{4^{n}}+2\sum\limits_{k=1}^\infty\binom{\alpha/2}{k}\frac{(-1)^k}{2^k}\sum\limits_{l=0}^{\lfloor\frac{k-1}{2}\rfloor}\binom{k}{l}\cos\left((k-2l)x\right)\right)~.
\end{align*}
Substituting $j=k-2l$ and rearranging the series above we get
\begin{align}
	\label{TfFourier}
	\frac{1}{2}\left\vert\sin\left(\frac{x}{2}\right)\right\vert^\alpha=\underbrace{\frac{1}{2^{\alpha/2+1}}\sum\limits_{n=0}^\infty\binom{\alpha/2}{2n}\binom{2n}{n}\frac{1}{4^n}}_{=\colon  c}+\sum\limits_{j=1}^\infty\underbrace{\left(\frac{(-1)^j}{2^{\alpha/2}}\sum\limits_{l=0}^\infty\binom{\alpha/2}{j+2l}\binom{j+2l}{l}\frac{1}{2^{j+2l}}\right)}_{=:c_j}\cos(jx)~.
\end{align}
The coefficients $c$ and $c_j$, $j\in\mathbb{N}$, can be expressed in terms of the Gauss hypergeometric function aforementioned in Section \ref{prelim}.

First, we compute the constant $c$ in equation (\ref{TfFourier}). Note that
\begin{align*}
	\binom{\alpha/2}{2n}\binom{2n}{n}\frac{1}{4^n}&=\frac{\frac{\alpha}{2}\left(\frac{\alpha}{2}-1\right)~\dots~\left(\frac{\alpha}{2}-2n+2\right)+\left(\frac{\alpha}{2}-2n+1\right)}{(2n)!}\frac{(2n)!}{n!n!}\frac{1}{4^n}\\
	&=\frac{-\frac{\alpha}{4}\left(-\frac{\alpha}{4}+\frac{1}{2}\right)\left(-\frac{\alpha}{4}+1\right)\left(-\frac{\alpha}{4}+\frac{3}{2}\right)~\dots~\left(-\frac{\alpha}{4}+n-1\right)\left(-\frac{\alpha}{4}+\frac{1}{2}+n-1\right)}{n!}\frac{(-2)^{2n}}{4^n}\frac{1}{n!}\\
	&=\frac{(-\frac{\alpha}{4})_n\left(-\frac{\alpha}{4}+\frac{1}{2}\right)_n}{(1)_n}\frac{1}{n!}.
\end{align*}
For all $\alpha>-1$ it holds that $1-\left(-\frac{\alpha}{4}\right)-\left(-\frac{\alpha}{4}+\frac{1}{2}\right)=\frac{1}{2}+\frac{\alpha}{2}>0$, and the Gauss hypergeometric theorem (\ref{gausshypgeomthm}) yields
\begin{align*}
	\sum\limits_{n=0}^\infty\binom{\alpha/2}{2n}\binom{2n}{n}\frac{1}{4^n}=\sum\limits_{n=0}^\infty\frac{\left(-\frac{\alpha}{4}\right)_n\left(-\frac{\alpha}{4}+\frac{1}{2}\right)_n}{(1)_n}\frac{1}{n!}={}_2F_1\left[-\frac{\alpha}{4},-\frac{\alpha}{4}+\frac{1}{2};1;1\right]=\frac{\Gamma\left(\frac{1}{2}+\frac{\alpha}{2}\right)}{\Gamma\left(1+\frac{\alpha}{4}\right)\Gamma\left(\frac{1}{2}+\frac{\alpha}{4}\right)}.
\end{align*}
Hence, the constant $c$ is given by
\begin{align}
\label{c}
	c=\frac{1}{2^{\alpha/2+1}}\sum\limits_{n=0}^\infty\binom{\alpha/2}{2n}\binom{2n}{n}\frac{1}{4^n}
=\frac{1}{2^{\alpha/2+1}}\frac{\Gamma\left(\frac{1}{2}+\frac{\alpha}{2}\right)}{\Gamma\left(1+\frac{\alpha}{4}\right)\Gamma\left(\frac{1}{2}+\frac{\alpha}{4}\right)}.
\end{align}

For all coefficients $c_j$, $j\in\mathbb{N}$, recall from Equation (\ref{TfFourier})
\begin{align*}
	c_j=\frac{(-1)^j}{2^{\alpha/2}}\sum\limits_{n=0}^\infty\binom{\alpha/2}{j+2n}\binom{j+2n}{n}\frac{1}{2^{j+2n}}.
\end{align*}
We compute 
\begin{align*}
	\binom{\alpha/2}{j+2l}\binom{j+2l}{l}\frac{1}{2^{j+2l}}&=\frac{1}{2^j}\frac{\frac{\alpha}{2}\left(\frac{\alpha}{2}-1\right)~\dots~\left(\frac{\alpha}{2}-j-2l+1\right)}{(j+2l)!}\frac{(j+2l)!}{l!(j+l)!}\frac{1}{4^l}\\
	&=\frac{\frac{\alpha}{2}\left(\frac{\alpha}{2}-1\right)~\dots~\left(\frac{\alpha}{2}-j+1\right)}{2^jj!}\frac{\left(\frac{\alpha}{2}-j\right)\left(\frac{\alpha}{2}-j-1\right)~\dots~\left(\frac{\alpha}{2}-j-2l+1\right)}{(j+1)_l}\frac{1}{4^l}\frac{1}{l!}\\
	&=\frac{\Gamma\left(\frac{\alpha}{2}+1\right)}{2^j\Gamma\left(\frac{\alpha}{2}-j+1\right)j!}\frac{\left(-\frac{\alpha}{4}+\frac{j}{2}\right)_l\left(-\frac{\alpha}{4}+\frac{j}{2}+\frac{1}{2}\right)_l}{(j+1)_l}\frac{(-2)^{2l}}{4^l}\frac{1}{l!}\\
	&=\frac{\Gamma\left(\frac{\alpha}{2}+1\right)}{2^j\Gamma\left(\frac{\alpha}{2}-j+1\right)j!}\frac{\left(-\frac{\alpha}{4}+\frac{j}{2}\right)_l\left(-\frac{\alpha}{4}+\frac{j}{2}+\frac{1}{2}\right)_l}{(j+1)_l}\frac{1}{l!}.
\end{align*}

Note that $j+1-\left(-\frac{\alpha}{4}+\frac{j}{2}\right)-\left(-\frac{\alpha}{4}+\frac{j}{2}+\frac{1}{2}\right)=\frac{1}{2}+\frac{\alpha}{2}>0$ for all $\alpha>-1$. Applying the Gauss hypergeometric theorem (\ref{gausshypgeomthm}) and Legendre's duplication formula $\Gamma\left(\frac{x}{2}\right)\Gamma\left(\frac{x+1}{2}\right)=\sqrt{\pi}/2^{x-1}\Gamma(x)$, it follows that
\begin{align*}
	\sum\limits_{l=0}^\infty\binom{\alpha/2}{j+2l}\binom{j+2l}{l}\frac{1}{2^{j+2l}}&=\frac{\Gamma\left(\frac{\alpha}{2}+1\right)}{2^j\Gamma\left(\frac{\alpha}{2}-j+1\right)j!}\sum\limits_{l=0}^\infty\frac{\left(-\frac{\alpha}{4}+\frac{j}{2}\right)_l\left(-\frac{\alpha}{4}+\frac{j}{2}+\frac{1}{2}\right)_l}{(j+1)_l}\frac{1}{l!}\\
	&=\frac{\Gamma\left(\frac{\alpha}{2}+1\right)}{2^j\Gamma\left(\frac{\alpha}{2}-j+1\right)j!}{}_2F_1\left[-\frac{\alpha}{4}+\frac{j}{2},-\frac{\alpha}{4}+\frac{j}{2}+\frac{1}{2};j+1;1\right]\\
	&=\frac{\Gamma\left(\frac{\alpha}{2}+1\right)}{2^j\Gamma\left(\frac{\alpha}{2}-j+1\right)j!}\frac{\Gamma\left(j+1\right)\Gamma\left(\frac{1}{2}+\frac{\alpha}{2}\right)}{\Gamma\left(1+\frac{\alpha}{4}+\frac{j}{2}\right)\Gamma\left(\frac{1}{2}+\frac{\alpha}{4}+\frac{j}{2}\right)}\\
	&=\frac{\frac{\sqrt{\pi}}{2^\alpha}\Gamma\left(1+\alpha\right)}{2^j\Gamma\left(\frac{\alpha}{2}-j+1\right)\frac{\sqrt{\pi}}{2^{\alpha/2+j}}\Gamma\left(\frac{\alpha}{2}+j+1\right)}	=\frac{1}{2^{\alpha/2}}\frac{\Gamma\left(1+\alpha\right)}{\Gamma\left(\frac{\alpha}{2}-j+1\right)\Gamma\left(\frac{\alpha}{2}+j+1\right)}~.
\end{align*}
Thus, the coefficients $c_j$ are given by
\begin{align}
	c_j=\frac{(-1)^j}{2^{\alpha/2}}\sum\limits_{n=0}^\infty\binom{\alpha/2}{j+2n}\binom{j+2n}{n}\frac{1}{2^{j+2n}}=\frac{(-1)^j}{2^{\alpha}}\frac{\Gamma\left(1+\alpha\right)}{\Gamma\left(\frac{\alpha}{2}-j+1\right)\Gamma\left(\frac{\alpha}{2}+j+1\right)}
\end{align}
for any $j\in\mathbb{N}$. Note, that setting $j=0$ in the above yields $c=c_0/2$, where $c$ was given in (\ref{c}). For $\alpha=2k$, $k\in\mathbb{N}_0$ the binomial series expansion is only a finite sum and the coefficients $c_j$ simplify to $c_j=\frac{(-1)^j}{4^k}\binom{2k}{k-j}$ for $j=0,\dots,k$ and $c_j=0$ for all $j>k$. 

To summarize the Fourier series expansion of  $\frac{1}{2}\left\vert\sin\left(\frac{x}{2}\right)\right\vert^\alpha$ is given by 
\begin{align}
	\label{Fourierexp}
	\frac{1}{2}\left\vert\sin\left(\frac{x}{2}\right)\right\vert^\alpha=\frac{c_0}{2}+\sum\limits_{j=1}^\infty c_j\cos(jx). 
\end{align}

Absolute and uniform convergence of the above Fourier series for $\alpha\geq0$ follow from \cite[Theorem 2.5]{folland} since $\frac{1}{2}\vert\sin(x/2)\vert^\alpha$ is $\pi$-periodic, continuous and piecewise smooth for $\alpha\geq0$. For $-1<\alpha<0$ the function $\frac{1}{2}\vert\sin(x/2)\vert^\alpha$ is only $\pi$-periodic and piecewise smooth (with discontinuities at $x=2k\pi$). The Fourier series converges at every point where the function is continuous \cite[Theorem 2.1]{folland}.

\item For $\alpha>-1$ consider the even extension of $f\in L^1_{w,\alpha}(\mathbb{R}_+)$ onto the whole $\mathbb{R}$. For ease of notation, we write $f\in L^1_{e,w,\alpha}(\mathbb
R)$. 
We make use of the Fourier expansion (\ref{Fourierexp}) of the integral kernel of $T_\alpha$. Note that integration and summation can be interchanged by 
Lebesgue's dominated convergence theorem. Then, 
\begin{align*}
	T_\alpha f(y)&=\int_0^\infty\left\vert\sin\left(xy\right)\right\vert^\alpha f(x)dx=\int_\mathbb{R}	\frac{1}{2}\left\vert\sin\left(\frac{2xy}{2}\right)\right\vert^\alpha f(x)dx\\
	&=\frac{c_0}{2}\int\limits_\mathbb{R}f(x)dx+\sum_{j=1}^\infty c_j\int\limits_\mathbb{R}\cos(2jxy)f(x)dx\\
	&=\frac{c_0}{2}\mathcal{F}f(0)+\sum_{j=1}^\infty c_j\mathcal{F}f(2jy).
\end{align*}
The case $\alpha=2k$, $k\in\mathbb{N}_0$ follows immediately from the definition (\ref{cjdef}) of the coefficients $c_j$ .
\end{enumerate}
\end{proof}

\begin{cor}
	\label{corollary_ck}
	\begin{enumerate}[(i)]
		\item \label{seriesconv}The series $\sum_{j=1}^\infty c_j$ converges absolutely for all $\alpha\geq0$. In particular, for $\alpha>0$ it holds that $\sum_{j=1}^\infty c_j=-\frac{c_0}{2}$,
		and in the case $\alpha\in(0,2]$, the absolute limit is given by $\sum_{j=1}^\infty\vert c_j\vert=c_0/2$.
		For $-1<\alpha<0$, the series $\sum_{j=1}^\infty c_j$ diverges. 
		\item For $\alpha\geq 0$, the convergence in the series representation of $T_\alpha f$, $f\in L^1(\mathbb{R}_+)$, in Equation (\ref{TfFourier2}) is uniform.
		Consequently, $T_\alpha f$ is continuous and bounded for all $f\in L^1(\mathbb{R}_+)$, $\alpha\geq 0$. For $-1<\alpha<0$,
		convergence holds in the $L^2$-sense if $f\in L^1_{w,\alpha}(\mathbb{R}_+)\cap L^2(\mathbb{R}_+)$.

	\end{enumerate}
\end{cor}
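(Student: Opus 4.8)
The plan for (i) is to read everything off the explicit formula for $c_j$ together with the expansion proved in Theorem \ref{prop2}. First I would rewrite $c_j$ in a form whose large-$j$ behaviour is transparent. Applying Euler's reflection formula to $\Gamma\!\left(\tfrac{\alpha}{2}-j+1\right)$ and simplifying the resulting sine, one obtains, for $\alpha$ not an even integer,
\[
c_j=-\frac{\sin(\pi\alpha/2)\,\Gamma(1+\alpha)}{2^\alpha\pi}\,\frac{\Gamma\!\left(j-\tfrac{\alpha}{2}\right)}{\Gamma\!\left(j+\tfrac{\alpha}{2}+1\right)},\qquad j\ge 1.
\]
Since $\Gamma\!\left(j-\tfrac{\alpha}{2}\right)/\Gamma\!\left(j+\tfrac{\alpha}{2}+1\right)\sim j^{-(\alpha+1)}$ as $j\to\infty$, this gives $|c_j|\sim C_\alpha\,j^{-(\alpha+1)}$, so $\sum_j|c_j|<\infty$ exactly when $\alpha>0$ (the case $\alpha=0$ being trivial, and even integers $\alpha=2k$ giving finite sums). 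To identify the limit for $\alpha>0$, I would evaluate the absolutely convergent Fourier series of Theorem \ref{prop2}(i) at $x=0$: the left-hand side is $0$ while $\cos(j\cdot 0)=1$, yielding $0=\tfrac{c_0}{2}+\sum_{j\ge1}c_j$, i.e. $\sum_{j\ge1}c_j=-c_0/2$.

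The refined statements follow from the sign of $c_j$, which the reflection-formula expression makes explicit. For $\alpha\in(-1,2)$ the ratio $\Gamma\!\left(j-\tfrac{\alpha}{2}\right)/\Gamma\!\left(j+\tfrac{\alpha}{2}+1\right)$ is positive for every $j\ge1$, so the sign of $c_j$ is constant and equals $-\operatorname{sign}\sin(\pi\alpha/2)$. Hence $c_j<0$ throughout for $\alpha\in(0,2)$ (and the endpoint $\alpha=2$ is checked directly from the finite-sum formula), giving $\sum_{j\ge1}|c_j|=-\sum_{j\ge1}c_j=c_0/2$; while $c_j>0$ throughout for $\alpha\in(-1,0)$, so $\sum_{j\ge1}c_j=\sum_{j\ge1}|c_j|$, which diverges because $|c_j|\sim C_\alpha j^{-(\alpha+1)}$ with $\alpha+1<1$. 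This simultaneously settles the divergence claim for $-1<\alpha<0$.

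For (ii) with $\alpha\ge0$ I would invoke the Weierstrass $M$-test. Since $f\in L^1(\mathbb R_+)$, its even extension lies in $L^1(\mathbb R)$ and $|\mathcal Ff(2jy)|\le\|f\|_1$ uniformly in $y$, so $|c_j\,\mathcal Ff(2jy)|\le|c_j|\,\|f\|_1$ with $\sum_j|c_j|<\infty$ from part (i). The series (\ref{TfFourier2}) therefore converges uniformly on $\mathbb R_+$; as a uniform limit of continuous partial sums (each $\mathcal Ff(2j\cdot)$ is continuous by Riemann--Lebesgue) it is continuous, and $\|T_\alpha f\|_\infty\le\bigl(\tfrac{c_0}{2}+\sum_{j\ge1}|c_j|\bigr)\|f\|_1<\infty$ gives boundedness.

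The genuinely delicate case is $-1<\alpha<0$, where $\sum_j|c_j|=\infty$ and the $M$-test is unavailable; here $f\in L^2(\mathbb R_+)$ is used precisely so that $\mathcal Ff\in L^2(\mathbb R)$ by Plancherel. My plan is to estimate $\|T_\alpha f-S_N\|_{L^2(dy)}$, where $S_N$ is the $N$-th partial sum, by moving to the frequency side: writing the tail as $\int_{\mathbb R}(g_\alpha-P_N)(2xy)f(x)\,dx$ with $P_N$ the partial Fourier sum of $g_\alpha(x)=\tfrac12|\sin(x/2)|^\alpha$, Plancherel in $y$ converts its $L^2$-norm into that of the dilated series $\sum_{m>N}\tfrac{c_m}{m}\,f(\cdot/(2m))$. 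For $-\tfrac12<\alpha<0$ this closes at once, since $\|f(\cdot/(2m))\|_2=\sqrt{2m}\,\|f\|_2$ and $\sum_{m>N}|c_m|/\sqrt m\sim\sum_{m>N} m^{-(\alpha+3/2)}<\infty$. The main obstacle is the remaining range $-1<\alpha\le-\tfrac12$: there $|c_m|\sim m^{-(\alpha+1)}$ is too large for any absolute (triangle-inequality or weighted Cauchy--Schwarz) bound, so the $L^2$-convergence must instead come from cancellation among the overlapping dilates $f(\cdot/(2m))$ — concretely, from decay of the correlations $\int f(\eta)f(m\eta/n)\,d\eta$ governing the Gram matrix via a Schur-type estimate. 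This is the step I expect to be hardest.
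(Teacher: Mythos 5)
Your part (i) and the $\alpha\geq0$ half of part (ii) are correct. For (i) you take a genuinely different route: the paper rewrites $c_j$ in Pochhammer form and invokes the convergence criterion for ${}_2F_1[\,\cdot\,;1]$ together with a term asymptotic from Bateman, whereas your reflection-formula identity $c_j=-\frac{\sin(\pi\alpha/2)\Gamma(1+\alpha)}{2^\alpha\pi}\,\Gamma\left(j-\tfrac{\alpha}{2}\right)/\Gamma\left(j+\tfrac{\alpha}{2}+1\right)$ gives the decay $\vert c_j\vert\sim C_\alpha j^{-(\alpha+1)}$ and the constancy of sign on $(0,2)$ and on $(-1,0)$ in one stroke; evaluating the Fourier series at $x=0$ to get $\sum_{j\geq1}c_j=-c_0/2$ is also slightly cleaner than the paper's evaluation of $T_\alpha f$ at $y=0$, which implicitly needs a test function with $\mathcal{F}f(0)\neq0$. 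The $M$-test argument for $\alpha\geq 0$ in (ii) is the same as the paper's.

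The genuine issue is (ii) for $-1<\alpha<0$, and there the gap you flag at $-1<\alpha\leq-\tfrac12$ cannot be filled: the claim is false on that range, so the cancellation/Schur argument you hope for does not exist. Take $f=\mathbbm{1}_{[1,2]}\in L^1_{w,\alpha}(\mathbb{R}_+)\cap L^2(\mathbb{R}_+)$. For $0<y<1$ and $x\in[1,2]$ we have $0<xy<2<\pi$ and $\sin(xy)\leq xy$, so $T_\alpha f(y)\geq\int_1^2(xy)^\alpha dx=\frac{2^{\alpha+1}-1}{\alpha+1}\,y^\alpha$, while every partial sum $T^{(n)}_\alpha f$ is bounded (by $2\bigl(\tfrac{c_0}{2}+\sum_{j\leq n}\vert c_j\vert\bigr)$, since $\vert\mathcal{F}f\vert\leq 2\Vert f\Vert_1$). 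Hence $T_\alpha f(y)-T^{(n)}_\alpha f(y)\geq \tfrac12\frac{2^{\alpha+1}-1}{\alpha+1}y^{\alpha}$ for all sufficiently small $y$, and $\int_0 y^{2\alpha}dy=\infty$ when $2\alpha\leq-1$; so the tail has infinite $L^2(\mathbb{R}_+)$-norm for every $n$. The statement can only hold for $\alpha>-\tfrac12$ (equivalently, exactly when the kernel $\vert\sin(\cdot/2)\vert^\alpha$ is locally square integrable).

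It is worth recording that your computation also exposes a defect in the paper's own proof of this case. The paper bounds $\int_0^\infty\vert\mathcal{F}f(2jy)\vert^2dy$ and the cross terms $\int_0^\infty\vert\mathcal{F}f(2jy)\vert\vert\mathcal{F}f(2ky)\vert dy$ by the constant $2\pi\Vert f\Vert_2^2$, discarding the dilation gains $\tfrac{1}{2j}$ and $\tfrac{1}{2\sqrt{jk}}$, and then lets $n\to\infty$ in $\sum_{j>n}\vert c_j\vert^2+2\sum_{n<j<k}\vert c_j\vert\vert c_k\vert$; but the double sum is infinite for every $n$ whenever $\sum_j\vert c_j\vert=\infty$, which by part (i) is the case for all $\alpha\in(-1,0)$, so the asserted limit $0$ never materializes. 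Your dilation-aware estimate $\Vert c_j\,\mathcal{F}f(2j\cdot)\Vert_{L^2(0,\infty)}\asymp\vert c_j\vert\,j^{-1/2}\asymp j^{-(\alpha+3/2)}$ is precisely the repair, and together with the plain triangle inequality in $L^2$ (no Plancherel-in-$y$ or Gram-matrix machinery needed) it proves the corollary on the maximal range $-\tfrac12<\alpha<0$.
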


\begin{proof}
	\begin{enumerate}[(i)]
		\item 
		The function $\frac{1}{2}\left\vert\sin\left(x/2\right)\right\vert^\alpha$ is $2\pi$-periodic, continuous and piecewise smooth for $\alpha\geq0$. Its Fourier series converges absolutely and uniformly on $\mathbb{R}$, in particular $\sum_{j=1}^\infty\vert c_j\vert<\infty$ \cite[Theorem 2.5]{folland}. Note that $T_\alpha f(0)$ is well-defined, and
		\begin{align*}
		T_\alpha f(0)=0=\frac{c_0}{2}\mathcal{F}f(0)+\sum\limits_{j=1}^\infty c_j\mathcal{F}f(0)=\mathcal{F}f(0)\left(\frac{c_0}{2}+\sum\limits_{j=1}^\infty c_j\right)
		\end{align*}
		for $\alpha>0$, hence $\sum_{j=1}^\infty c_j=-c_0/2$. 
		For $\alpha\in(0,2]$, it holds that $c_j\leq0$ for all $j\geq 1$ by their definition, and consequently $\sum\limits_{j=1}^\infty\vert c_j\vert=-\sum\limits_{j=1}^\infty c_j=c_0/2$. 
		
		To show the divergence of $\sum_{j=1}^\infty c_j$ in the case $-1<\alpha<0$, note that $$\Gamma\left(\frac{\alpha}{2}+j+1\right)=\Gamma\left(\frac{\alpha}{2}+2\right)\left(\frac{\alpha}{2}+2\right)_{j-1},$$ 
		and 
		$$\Gamma\left(\frac{\alpha}{2}-j+1\right)=\frac{\Gamma\left(\frac{\alpha}{2}\right)}{\left(\frac{\alpha}{2}-j+1\right)\left(\frac{\alpha}{2}-j+2\right)\dots\left(\frac{\alpha}{2}-1\right)}=\frac{\Gamma\left(\frac{\alpha}{2}\right)}{(-1)^{j-1}\left(1-\frac{\alpha}{2}\right)_{j-1}},$$
		where $(\cdot)_{n}$ is the Pochhammer symbol. Then,
		\begin{align}
			\label{cj_eq}
			c_j=\frac{(-1)^j}{2^\alpha}\frac{\Gamma(1+\alpha)}{\Gamma\left(\frac{\alpha}{2}+2\right)\left(\frac{\alpha}{2}+2\right)_{j-1}}\frac{(-1)^{j-1}\left(1-\frac{\alpha}{2}\right)_{j-1}}{\Gamma\left(\frac{\alpha}{2}\right)}=-\frac{\Gamma(1+\alpha)}{2^\alpha\Gamma\left(\frac{\alpha}{2}\right)\Gamma\left(\frac{\alpha}{2}+2\right)}\frac{\left(1-\frac{\alpha}{2}\right)_{j-1}\left(1\right)_{j-1}}{\left(\frac{\alpha}{2}+2\right)_{j-1}}\frac{1}{(j-1)!},
		\end{align}
		which yields 
		$$\sum_{j=1}^\infty c_j=-\frac{\Gamma(1+\alpha)}{2^\alpha\Gamma\left(\frac{\alpha}{2}\right)\Gamma\left(\frac{\alpha}{2}+2\right)}{}_2F_1\left[1-\frac{\alpha}{2},1;\frac{\alpha}{2}+2;1\right].$$
		The series diverges as $\frac{\alpha}{2}+2-\left(1-\frac{\alpha}{2}\right)-1=\alpha\in(-1,0)$, and since it holds that 
		\begin{align*}
			\frac{\left(1-\frac{\alpha}{2}\right)_n\left(1\right)_n}{\left(\frac{\alpha}{2}+2\right)_n}\frac{1}{n!}
				=\frac{\Gamma\left(\frac{\alpha}{2}+2\right)}{\Gamma\left(1-\frac{\alpha}{2}\right)}n^{-\alpha-1}\left(1+O\left(n^{-1}\right)\right),
		\end{align*}
		with $n=j-1$, where $-\alpha-1\in(-1,0)$, cf. \cite[p. 57]{bateman}.
		\item Define 
		$
		T_\alpha ^{(n)}f=\frac{c_0}{2}\mathcal{F}f(0)+\sum_{j=1}^n c_j\mathcal{F}f(2j~\cdot).
		$
		By (i) the Fourier coefficients satisfy $\vert c_j\vert\rightarrow0$ as $j\rightarrow\infty$. Furthermore, $L^1_{w,\alpha}(\mathbb{R}_+)\subset L^1(\mathbb{R}_+)$ and $\vert\mathcal{F}f(y)\vert\leq 2\Vert f\Vert_1$ for all $y\in\mathbb{R}_+$, where $\Vert\cdot\Vert_1$ denotes the $L^1$-norm on $\mathbb{R}_+$.
		
		Let $\alpha\geq 0$. Then, 
		\begin{align*}
			\lim\limits_{n\rightarrow\infty}\left\Vert T_\alpha f-T_\alpha ^{(n)}f\right\Vert_\infty&=\lim\limits_{n\rightarrow\infty}\sup\limits_{y\in\mathbb{R}_+}\left\vert T_\alpha f(y)-T_\alpha ^{(n)}f(y)\right\vert=\lim\limits_{n\rightarrow\infty}\sup\limits_{y\in\mathbb{R}}\left\vert\sum\limits_{j=n+1}^\infty c_j\mathcal{F}f(2jy)\right\vert\\
			&\leq \lim\limits_{n\rightarrow\infty}\sup\limits_{y\in\mathbb{R}}\sum\limits_{j=n+1}^\infty \vert c_j\vert\underbrace{\left\vert\mathcal{F}f(2jy)\right\vert}_{\leq 2\Vert f\Vert_1}\leq2\Vert f\Vert_1\lim\limits_{n\rightarrow\infty}\sum\limits_{j=n+1}^\infty\vert c_j\vert=0. 
		\end{align*}
		Since the Fourier transform $\mathcal{F}f$ is bounded and continuous for all integrable functions, continuity and boundedness of $T_\alpha f$ follow by the uniform convergence of the $T_\alpha ^{(n)}f$. 
		
		Considering the case $-1<\alpha<0$, note that on the space $L^1_{w,\alpha}(\mathbb{R}_+)\cap L^2(\mathbb{R}_+)$ the Fourier transform $\mathcal{F}$ is an $L^2$-isometry \cite[Section 2.2.4]{clasfour}, and by the Plancherel theorem the equality $\Vert\mathcal{F}f\Vert_2=2\pi\Vert f\Vert_2$ holds. Then,
		\begin{align*}
			\lim\limits_{n\rightarrow\infty}\left\Vert T_\alpha f-T_\alpha ^{(n)}f\right\Vert_2^2
			&=\lim\limits_{n\rightarrow\infty}\int_0^\infty\left\vert\sum\limits_{j=n+1}^\infty c_j\mathcal{F}f(2jy)\right\vert^2 dy\\
			&\leq \lim\limits_{n\rightarrow\infty}\int_0^\infty\left(\sum_{j=n+1}^\infty\vert c_j\vert^2\vert\mathcal{F}f(2jy)\vert^2+2\sum_{n+1<j<k}^\infty\vert c_j\vert\vert c_k\vert\vert\mathcal{F}f(2jy)\vert\vert\mathcal{F}f(2jk)\vert\right) dy\\
			&\leq \lim\limits_{n\rightarrow\infty}\sum_{j=n+1}^\infty\vert c_j\vert^2\underbrace{\int_0^\infty\vert\mathcal{F}f(2jy)\vert^2dy}_{\leq2\pi\Vert f\Vert_2^2}+2\sum_{n+1<j<k}^\infty\vert c_j\vert\vert c_k\vert\underbrace{\int_0^\infty\vert\mathcal{F}f(2jy)\vert\vert\mathcal{F}f(2jk)\vert dy}_{\leq2\pi\Vert f\Vert_2^2 }\\
			&=0,
		\end{align*}
		where the last inequality is due to the Cauchy-Schwartz inequality.

	\end{enumerate}
\end{proof}
For the boundedness of the integral operator $T_\alpha$, in particular for the case $-1<\alpha<0$, we introduce the Sobolev space $$W^{1,1}(\mathbb{R}_+)=\left\{f\in L^1(\mathbb{R}_+): \exists f'\in L^1(\mathbb{R}_+)\right\}$$ of integrable functions $f$ with integrable first derivative $f'$, and define the norm $\Vert\cdot\Vert_D$ on the space $L^1_{w,\alpha}(\mathbb{R}_+)\cap W^{1,1}(\mathbb{R}_+)$ by 
$$\Vert f\Vert_D=\int\limits_{\mathbb{R}_+}\vert f(x)\vert\max\left\{\frac{1}{x},1\right\}dx+\Vert f'\Vert_1.$$
Moreover, consider the space $L^1([0,1],dx)\cap C_b\left((1,\infty)\right)$ with norm  $\Vert\cdot\Vert_\ast$ defined by 
$$\Vert g\Vert_\ast=\int\limits_0^1\vert g(y)\vert dy+\sup\limits_{y>1} \vert g(y)\vert.$$

\begin{thm}
	\label{boundedoperator}
	\begin{enumerate}[(i)]
		\item For $\alpha\geq 0$ the integral operator $T_\alpha:\left(L^1(\mathbb{R}_+),\Vert\cdot\Vert_1\right)\rightarrow \left(C_{b}(\mathbb{R}_+),\Vert\cdot\Vert_\infty\right)$ is a linear bounded (continuous) operator. In particular, for $\alpha\in(0,2]$, the operator norm is bounded by $\Vert T_\alpha\Vert\leq 2c_0$. 
		\item For $-1<\alpha<0$ the integral operator $T_\alpha:\left(L^1_{w,\alpha}(\mathbb{R_+})\cap W^{1,1}(\mathbb{R}_+),\Vert\cdot\Vert_D\right)\rightarrow\left(L^1\left([0,1],dx\right)\cap C_b\left((1,\infty)\right),\Vert\cdot\Vert_\ast\right)$ is a linear bounded (continuous) operator. The operator norm is bounded by 
		\begin{align*}
			\Vert T_\alpha\Vert\leq C_\alpha\left(\frac{1}{\pi}+1\right)+c_0\left(1-\frac{\alpha}{\alpha+2}{}_3F_2\left[1-\frac{\alpha}{2},1,1;\frac{\alpha}{2}+2,2;1\right]\right),
		\end{align*}
		with $C_\alpha=\sqrt{\pi}\frac{\Gamma\left(\frac{1+\alpha}{2}\right)}{\Gamma\left(1+\frac{\alpha}{2}\right)}$ and hypergeometric function ${}_3F_2$ as defined in Equation (\ref{genhypergeom}).
	\end{enumerate}
\end{thm}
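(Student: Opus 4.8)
The plan is to treat the two regimes separately: for $\alpha\ge0$ the uniform bound already present in Lemma \ref{lemma1} does most of the work, while for $-1<\alpha<0$ I would rely on the series representation (\ref{TfFourier2}) together with the $W^{1,1}$ regularity encoded in $\|\cdot\|_D$. For part (i), linearity is clear, and the triangle inequality used in Lemma \ref{lemma1} gives $|T_\alpha f(y)|\le T_\alpha|f|(y)\le\|f\|_1$ for every $y$, so $T_\alpha$ sends $L^1(\mathbb{R}_+)$ into the bounded functions with norm at most $1$; that $T_\alpha f$ is actually continuous, hence lies in $C_b(\mathbb{R}_+)$, is exactly Corollary \ref{corollary_ck}(ii). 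To get the sharper constant for $\alpha\in(0,2]$ I would insert (\ref{TfFourier2}), bound $|\mathcal{F}f(2jy)|\le2\|f\|_1$ uniformly in $y$ and $j$, and use $\sum_{j\ge1}|c_j|=c_0/2$ from Corollary \ref{corollary_ck}(i), obtaining $|T_\alpha f(y)|\le\frac{c_0}{2}\cdot2\|f\|_1+\frac{c_0}{2}\cdot2\|f\|_1=2c_0\|f\|_1$.

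For part (ii) I would split the target norm as $\|T_\alpha f\|_\ast=\int_0^1|T_\alpha f(y)|\,dy+\sup_{y>1}|T_\alpha f(y)|$ and estimate each piece by a multiple of $\|f\|_D$. The $[0,1]$-piece is handled by the computation already carried out in the proof of Lemma \ref{lemma1} with $K=1$: after Fubini and the substitution $u=xy$ one arrives at $\int_0^1 T_\alpha|f|(y)\,dy\le C_\alpha\int_0^\infty\bigl(\frac1\pi+\frac1x\bigr)|f(x)|\,dx$, and since the elementary inequality $\frac1\pi+\frac1x\le\bigl(\frac1\pi+1\bigr)\max\{\frac1x,1\}$ holds for all $x>0$, this is at most $C_\alpha\bigl(\frac1\pi+1\bigr)\|f\|_D$, which is precisely the first summand of the asserted norm bound.

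The $(1,\infty)$-piece is the main obstacle, since for $-1<\alpha<0$ the series $\sum_j c_j$ diverges (Corollary \ref{corollary_ck}(i)), so the crude bound $|\mathcal{F}f(2jy)|\le2\|f\|_1$ from part (i) is useless. Here I would exploit $f\in W^{1,1}$: the even extension of $f$ is absolutely continuous on $\mathbb{R}$ with integrable (odd) derivative of $L^1(\mathbb{R})$-norm $2\|f'\|_1$, whence $\mathcal{F}f'(\xi)=-i\xi\,\mathcal{F}f(\xi)$ gives the decay $|\mathcal{F}f(2jy)|\le\|f'\|_1/(jy)$. Feeding this and $|\mathcal{F}f(0)|\le2\|f\|_1$ into (\ref{TfFourier2}), for $y>1$ I obtain
\[
|T_\alpha f(y)|\le c_0\|f\|_1+\frac{\|f'\|_1}{y}\sum_{j=1}^\infty\frac{c_j}{j}\le c_0\|f\|_1+\|f'\|_1\sum_{j=1}^\infty\frac{c_j}{j}.
\]
The decisive step is to evaluate $\sum_{j\ge1}c_j/j$: starting from the Pochhammer form (\ref{cj_eq}) of $c_j$, writing $\frac1j=\frac{(1)_{j-1}}{(2)_{j-1}}$ and reindexing $n=j-1$ recasts the sum as a ${}_3F_2\!\left[1-\frac\alpha2,1,1;\frac\alpha2+2,2;1\right]$, whose parameter excess equals $\alpha+1>0$, so it converges (this is exactly where $\alpha>-1$ enters, consistent with $c_j/j\sim j^{-\alpha-2}$); Legendre's duplication formula then collapses the gamma prefactor from (\ref{cj_eq}) to $-c_0\frac{\alpha}{\alpha+2}$, matching the form of $c_0$ in (\ref{c}). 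Using $\|f\|_1\le\|f\|_D$ and $\|f'\|_1\le\|f\|_D$ yields $\sup_{y>1}|T_\alpha f(y)|\le c_0\bigl(1-\frac{\alpha}{\alpha+2}\,{}_3F_2[\dots]\bigr)\|f\|_D$, the second summand of the claim.

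Finally, the same bound $c_j/(jy)$ makes the series converge uniformly on every $[y_0,\infty)$ with $y_0>1$ by the Weierstrass $M$-test, so $T_\alpha f$ is continuous and bounded there, i.e. $T_\alpha f\in C_b((1,\infty))$, and combining with the $L^1([0,1])$ bound gives the stated operator norm. The one delicate point I would address explicitly is the pointwise validity of (\ref{TfFourier2}) when $-1<\alpha<0$, where the Fourier series of the kernel only converges almost everywhere; I would resolve this by noting that $f\in W^{1,1}\cap L^1_{w,\alpha}$ is bounded and hence in $L^2$, so the $L^2$-convergence of Corollary \ref{corollary_ck}(ii) identifies the limit almost everywhere, while the uniform convergence just established upgrades it to a genuine pointwise identity on $(1,\infty)$.
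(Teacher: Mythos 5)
Your proof is correct and follows essentially the same route as the paper's: part (i) via the series representation (\ref{TfFourier2}) and $\sum_{j\geq1}\vert c_j\vert=c_0/2$, and part (ii) via the split of $\Vert\cdot\Vert_\ast$, the Lemma \ref{lemma1} computation on $[0,1]$, the $W^{1,1}$-decay $\vert\mathcal{F}f(2jy)\vert\leq\Vert f'\Vert_1/(jy)$, and the evaluation $\sum_{j\geq1}c_j/j=-c_0\frac{\alpha}{\alpha+2}\,{}_3F_2\left[1-\frac{\alpha}{2},1,1;\frac{\alpha}{2}+2,2;1\right]$. Your additional touches (the trivial bound $\Vert T_\alpha f\Vert_\infty\leq\Vert f\Vert_1$ for membership in (i), the Weierstrass $M$-test giving continuity on $(1,\infty)$, and the discussion of the pointwise validity of (\ref{TfFourier2}), which the paper simply inherits from Theorem \ref{prop2}(ii)) are refinements of, not departures from, the paper's argument.
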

\begin{proof}
	Linearity of $T_\alpha$ follows directly from the linearity of integrals. For the boundedness of $T_\alpha$ we consider the cases $\alpha\geq 0$ and $-1<\alpha<0$ separately.
	\begin{enumerate}[(i)]
		\item For $\alpha\geq 0$, use the inequality $\vert\mathcal{F}f(y)\vert\leq 2\Vert f\Vert_1$, $y\in\mathbb{R}_+$, to get
		\begin{align*}
			\Vert T_\alpha f\Vert_\infty=\left\Vert \frac{c_0}{2}\mathcal{F}f(0)+\sum\limits_{j=1}^\infty c_j\mathcal{F}f(2j\cdot)\right\Vert_\infty\leq2\Vert f\Vert_1\left(\frac{c_0}{2}+\sum\limits_{j=1}^\infty\vert c_j\vert\right)<\infty
		\end{align*}
		and 
		\begin{align*}
			\Vert T_\alpha \Vert=\sup\limits_{f\in L^1(\mathbb{R}_+)\setminus\{0\}}\frac{\Vert T_\alpha f\Vert_\infty}{\Vert f\Vert_1}\leq
			2\left(\frac{c_0}{2}+\sum\limits_{j=1}^\infty\vert c_j\vert\right)<\infty,
		\end{align*}
		where in the above $0$ denotes the constant zero function. Following Corollary \ref{corollary_ck} (ii) the upper bound of the operator norm of $T_\alpha$ for $\alpha\in(0,2]$ is then given by $$\Vert T_\alpha \Vert \leq 2\left(\frac{c_0}{2}+\sum\limits_{j=1}^\infty\vert c_j\vert\right)=2c_0.$$
		\item Note that for $f\in C^n(\mathbb{R}_+)$ with integrable derivatives $f^{(k)}\in L^1(\mathbb{R}_+)$ for all $k\leq n$ the Fourier transform $\mathcal{F}f$ behaves as $O\left(\frac{1}{y^n}\right)$, $y\rightarrow\infty$. In particular, for $f\in W^{1,1}(\mathbb{R}_+)$ it follows that $\vert\mathcal{F}f(y)\vert=\frac{\vert\mathcal{F}f'(y)\vert}{\vert y\vert }$. Furthermore, $\vert c_j\vert=c_j$ since the coefficients $c_j$, $j\geq 0$, are positive for $-1<\alpha<0$ by their definition in Equation (\ref{cjdef}), and from Equation (\ref{cj_eq}) in Corollary \ref{corollary_ck} (ii) it holds that
		\begin{align}
			\label{cjhypergeom}
			c_j=-\frac{\Gamma(1+\alpha)}{2^\alpha\Gamma\left(\frac{\alpha}{2}\right)\Gamma\left(\frac{\alpha}{2}+2\right)}\frac{\left(1-\frac{\alpha}{2}\right)_{j-1}}{\left(\frac{\alpha}{2}+2\right)_{j-1}}=-\frac{\Gamma(1+\alpha)}{2^\alpha\Gamma\left(1+\frac{\alpha}{2}\right)^2}\frac{\alpha}{\alpha+2}\frac{\left(1-\frac{\alpha}{2}\right)_{j-1}}{\left(\frac{\alpha}{2}+2\right)_{j-1}}=-c_0\frac{\alpha}{\alpha+2}\frac{\left(1-\frac{\alpha}{2}\right)_{j-1}}{\left(\frac{\alpha}{2}+2\right)_{j-1}}.
		\end{align}
		for $j\geq 1$. With $j=\frac{(2)_{j-1}}{(1)_{j-1}}$ it follows that
		\begin{align*}
			\sum\limits_{j=1}^\infty \frac{\vert c_j\vert}{j}=\sum\limits_{j=1}^\infty \frac{c_j}{j}=-c_0\frac{\alpha}{\alpha+2}\sum_{j=1}^\infty\frac{\left(1-\frac{\alpha}{2}\right)_{j-1}\left(1\right)_{j-1}\left(1\right)_{j-1}}{\left(\frac{\alpha}{2}+2\right)_{j-1}\left(2\right)_{j-1}}\frac{1}{(j-1)!}=-c_0\frac{\alpha}{\alpha+2}\cdot{}_3F_2\left[1-\frac{\alpha}{2},1,1;\frac{\alpha}{2}+2,2;1\right].
		\end{align*}
		The above hypergeometric function series converges since $\frac{\alpha}{2}+2+2-\left(1-\frac{\alpha}{2}\right)-1-1=1+\alpha>0$ for all $-1<\alpha<0$.
		
		Analogously to the proof of Lemma \ref{lemma1} we can compute 
		\begin{align*}
			\int_0^1\vert T_\alpha f(y)\vert\leq \int_0^\infty C_\alpha\left(\frac{1}{\pi}+\frac{1}{x}\right)\vert f(x)\vert dx=C_\alpha\Bigg(\frac{1}{\pi}\underbrace{\Vert f\Vert_1}_{\leq\Vert f \Vert_D}+\underbrace{\int_0^\infty\vert f(x)\vert\frac{dx}{x}}_{\substack{\leq \int_0^\infty\vert f(x)\vert\max\left\{\frac{1}{x},1\right\}dx\\[1ex]\hspace{-11ex}\leq\Vert f\Vert_D}}\Bigg)\leq C_\alpha\left(\frac{1}{\pi}+1\right)\Vert f\Vert_D,
		\end{align*}
	where $C_\alpha=\int_0^\pi\vert\sin(u)\vert^\alpha du=\sqrt{\pi}\frac{\Gamma\left(\frac{1+\alpha}{2}\right)}{\Gamma\left(1+\frac{\alpha}{2}\right)}$. Moreover, 
	\begin{align*}
		\sup\limits_{y>1}\left\vert T_\alpha f(y)\right\vert&\leq \frac{c_0}{2}\underbrace{\mathcal{F}f(0)}_{=2\Vert f\Vert_1}+\sup\limits_{y>1}\sum\limits_{j=1}^\infty c_j\left\vert\mathcal{F}f(2jy)\right\vert=c_0\Vert f\Vert_1+\sup\limits_{y>1}\sum\limits_{j=1}^\infty c_j\frac{1}{2jy}\underbrace{\vert\mathcal{F}f'(2jy)\vert}_{\leq 2\Vert f'\Vert_1}\\
		&\leq c_0\Vert f\Vert_D+\sup\limits_{y>1}\frac{1}{y}\Vert f'\Vert_1\sum\limits_{j=1}^\infty\frac{c_j}{j}\\
		&\leq c_0\Vert f\Vert_D-\Vert f\Vert_Dc_0\frac{\alpha}{\alpha+2}\cdot{}_3F_2\left[1-\frac{\alpha}{2},1,1;\frac{\alpha}{2}+2,2;1\right]\\[1ex]
		&=\Vert f\Vert_D c_0\left(1-\frac{\alpha}{\alpha+2}\cdot{}_3F_2\left[1-\frac{\alpha}{2},1,1;\frac{\alpha}{2}+2,2;1\right]\right).
	\end{align*}
	Ultimately, the operator norm is bounded by 
	\begin{align*}
		\Vert T_\alpha\Vert=\sup\limits_{f\in \left(L^1_{w,\alpha}(\mathbb{R_+})\cap W^{1,1}(\mathbb{R}_+)\right)\setminus\{0\}}\frac{\Vert T_\alpha f\Vert_\ast}{\Vert f\Vert_D}\leq C_\alpha\left(\frac{1}{\pi}+1\right)+c_0\left(1-\frac{\alpha}{\alpha+2}\cdot{}_3F_2\left[1-\frac{\alpha}{2},1,1;\frac{\alpha}{2}+2,2;1\right]\right).
	\end{align*}
	\end{enumerate}
\end{proof}

Similar to Theorem \ref{prop2}, it is also possible to give a series representation of the \emph{$\alpha$-cosine transform on $\mathbb{R}_+$}, which we define by
\begin{align}
\label{costrafo}
K_\alpha f(y)=\int_0^\infty\left\vert\cos\left(xy\right)\right\vert^\alpha f(x)dx, \quad y\geq0,~f\in L^1_{w,\alpha}(\mathbb{R}_+).
\end{align}
\begin{cor}
	\label{coscorollary}
	Let $\alpha>-1$. Any function $f\in L^1_{w,\alpha}(\mathbb{R}_+)$ is mapped by $K_\alpha$ onto
	\begin{align*}
	K_\alpha f(y)=\frac{\tilde{c}_0}{2}\mathcal{F}f(0)+\sum\limits_{j=1}^\infty\tilde{c}_j\mathcal{F}f(2jy),\quad y\geq 0
	\end{align*}
	with coefficients $\tilde{c}_j$, $j=0,1,\dots$, given by
	\begin{align*}
	\tilde{c}_j=(-1)^jc_j=\begin{cases}
			\frac{1}{2^{\alpha}}\frac{\Gamma\left(1+\alpha\right)}{\Gamma\left(\frac{\alpha}{2}-j+1\right)\Gamma\left(\frac{\alpha}{2}+j+1\right)},&\alpha\neq 2k, ~j\in\mathbb{N}_0,\\
	\frac{1}{4^k}\binom{2k}{k-j},&\alpha=2k,~j=0,\dots,k,~k\in\mathbb{N}_0,\\
	0,&\alpha=2k,~j>k,~k\in\mathbb{N}_0.
	\end{cases}
	\end{align*}
\end{cor}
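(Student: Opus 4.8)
The plan is to reduce the cosine statement to the already-proved sine case of Theorem~\ref{prop2}, exploiting the elementary phase shift $\cos(\theta)=\sin(\theta+\pi/2)$ rather than recomputing any hypergeometric sums. The crucial observation is that, after taking absolute values, $\frac{1}{2}\left\vert\cos\left(\frac{x}{2}\right)\right\vert^\alpha=\frac{1}{2}\left\vert\sin\left(\frac{x+\pi}{2}\right)\right\vert^\alpha$, so the cosine kernel is exactly the sine kernel of Theorem~\ref{prop2}(i) evaluated at the shifted argument $x+\pi$. I would therefore transport the known Fourier expansion through this shift instead of deriving a new one.

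First I would substitute $w=x+\pi$ into the expansion of Theorem~\ref{prop2}(i) and use the identity $\cos(jx+j\pi)=(-1)^j\cos(jx)$ to obtain
\begin{align*}
	\frac{1}{2}\left\vert\cos\left(\frac{x}{2}\right)\right\vert^\alpha=\frac{c_0}{2}+\sum_{j=1}^\infty c_j\cos\left(j(x+\pi)\right)=\frac{c_0}{2}+\sum_{j=1}^\infty(-1)^jc_j\cos(jx).
\end{align*}
Reading off the coefficients gives $\tilde{c}_j=(-1)^jc_j$ for $j\geq1$ and $\tilde{c}_0=c_0$. The closed forms in the statement then follow at once by multiplying the formulas (\ref{cjdef}) by $(-1)^j$: in the non-even case the sign $(-1)^j$ cancels, leaving $\frac{1}{2^\alpha}\Gamma(1+\alpha)/\left[\Gamma\!\left(\frac{\alpha}{2}-j+1\right)\Gamma\!\left(\frac{\alpha}{2}+j+1\right)\right]$, while in the case $\alpha=2k$ the product $(-1)^j\cdot\frac{(-1)^j}{4^k}\binom{2k}{k-j}=\frac{1}{4^k}\binom{2k}{k-j}$ is positive, the sum again terminating at $j=k$.

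With this expansion in hand I would repeat the argument of Theorem~\ref{prop2}(ii) verbatim: evenly extend $f$ to $\mathbb{R}$, write $K_\alpha f(y)=\int_\mathbb{R}\frac{1}{2}\left\vert\cos(xy)\right\vert^\alpha f(x)\,dx$, insert the expansion of $\frac{1}{2}\left\vert\cos(\cdot/2)\right\vert^\alpha$ evaluated at $2xy$, and interchange summation and integration. Since $\cos(2jxy)$ integrated against the even function $f$ over $\mathbb{R}$ produces $\mathcal{F}f(2jy)$, this yields $K_\alpha f(y)=\frac{\tilde{c}_0}{2}\mathcal{F}f(0)+\sum_{j=1}^\infty\tilde{c}_j\mathcal{F}f(2jy)$, with the series terminating for $\alpha=2k$.

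I expect no genuine obstacle. The only points needing care—the almost-everywhere convergence of the shifted Fourier series (whose jumps for $-1<\alpha<0$ now sit at odd multiples of $\pi$, the images of $2k\pi$ under the shift), the justification of the interchange by dominated convergence, and the convergence of the resulting series—are all inherited free of charge from the sine case, because $\vert\tilde{c}_j\vert=\vert c_j\vert$ makes every majorization and every convergence statement of Corollary~\ref{corollary_ck} applicable unchanged. Thus the entire content of the corollary follows from Theorem~\ref{prop2} through the single identity $\cos(x/2)=\sin\left((x+\pi)/2\right)$.
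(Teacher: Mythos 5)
Your proof is correct, but it takes a genuinely different route from the paper's. The paper's own proof mirrors the derivation of Theorem~\ref{prop2}: it invokes the other form of the double angle formula, $\cos(2x)=2\cos^2(x)-1$, to expand
\begin{align*}
	\frac{1}{2}\left\vert\cos\left(\frac{x}{2}\right)\right\vert^\alpha=\frac{1}{2}\left(\frac{1+\cos(x)}{2}\right)^{\alpha/2}=\frac{1}{2^{\alpha/2+1}}\sum\limits_{k=0}^\infty\binom{\alpha/2}{k}\cos^k(x),
\end{align*}
i.e.\ the same binomial series as in the sine case but without the factor $(-1)^k$, and then relies on the reader tracking this sign change through the hypergeometric computation (after the substitution $j=k-2l$ one has $(-1)^k=(-1)^j$, so dropping it turns $c_j$ into $(-1)^jc_j$). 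You instead transport the finished Fourier expansion of Theorem~\ref{prop2}(i) through the phase shift $\cos(x/2)=\sin\left((x+\pi)/2\right)$ together with $\cos(j(x+\pi))=(-1)^j\cos(jx)$, so no re-expansion is needed at all. Your route is arguably cleaner: the coefficient identities, the convergence statements (uniform for $\alpha\geq0$, almost everywhere for $-1<\alpha<0$, with the singular points shifted to odd multiples of $\pi$), and the dominated-convergence justification for interchanging sum and integral are all inherited verbatim because $\vert\tilde{c}_j\vert=\vert c_j\vert$. What the paper's route buys is that the cosine case is visibly structurally parallel to the sine case from the first line, so the same hypergeometric machinery and the termination of the series at $j=k$ for $\alpha=2k$ are transparent; but both arguments are short, and yours reduces the corollary to previously established results with strictly less recomputation.
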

\begin{proof}
	The result follows immediately from the equivalent formulation of the cosine double angle formula given by $\cos(2x)=2\cos^2(x)-1$, which yields
	\begin{align*}
		\frac{1}{2}\left\vert\cos\left(\frac{x}{2}\right)\right\vert^\alpha=\frac{1}{2}\left(\cos^2\left(\frac{x}{2}\right)\right)^{\alpha/2}=\frac{1}{2}\left(\frac{1+\cos(x)}{2}\right)^{\alpha/2}=\frac{1}{2^{\alpha/2+1}}\sum\limits_{k=0}^\infty\binom{\alpha/2}{k}\cos^k(x).
	\end{align*}
\end{proof}

\begin{rem}
			Counterparts of Corollary \ref{corollary_ck} and Theorem \ref{boundedoperator} for $K_\alpha$ follow immediately from $\vert\tilde{c}_j\vert=\vert c_j\vert$.
			The only exception is the convergence of the series $\sum\limits_{j=1}^\infty\tilde{c}_j$ for $-1<\alpha<0$. This can be easily seen by the Leibniz criterion. More precisely, by Equation (\ref{cjhypergeom}) it holds that 
			\begin{align*}
				\tilde{c}_j=(-1)^jc_j=(-1)^{j-1}c_0\frac{\alpha}{\alpha+2}\frac{\left(1-\frac{\alpha}{2}\right)_{j-1}}{\left(\frac{\alpha}{2}+2\right)_{j-1}},
			\end{align*}
			hence
			\begin{align*}
			\sum\limits_{j=1}^\infty\tilde{c}_j=c_0\frac{\alpha}{\alpha+2}\sum\limits_{j=1}^\infty\frac{(1)_{j-1}\left(1-\frac{\alpha}{2}\right)_{j-1}}{\left(\frac{\alpha}{2}+2\right)_{j-1}}\frac{(-1)^{j-1}}{(j-1)!}=c_0\frac{\alpha}{\alpha+2}\cdot{}_2F_1\left[1,1-\frac{\alpha}{2};2+\frac{\alpha}{2};-1\right].
			\end{align*}
			The hypergeometric series above converges since $2+\frac{\alpha}{2}-1-\left(1-\frac{\alpha}{2}\right)+1=\alpha+1>0$ \ for $\alpha\in(-1,0)$.
\end{rem}

\subsection{Approximating the Fourier transform}
\label{Fourierapproximation}

Denote by $\hat{f}=\mathcal{F}f$ the Fourier transform of $f\in L_{e,w,\alpha}^1(\mathbb{R})$. Let $\hat{f}_R=\hat{f}\mathbbm{1}\{ \vert y\vert\leq R\}$, $R>0$ be its restriction to $[-R,R]$.
By the Fourier transform's symmetry it suffices to approximate $\hat{f}_R$ at equidistant points $\left\{n\frac{R}{N}\right\}$, $n=1,\dots,N$, $N\in\mathbb{N}$, from the series representation of $T_\alpha f$ in Theorem \ref{prop2}. Define vectors $\bm{\xi}, \bm{\eta}\in\mathrm{R}^N$ with elements
\begin{align*}
	\xi_n=\hat{f}_R\left(n\frac{R}{N}\right),
\end{align*}
and
\begin{align*}
	\eta_n=T_\alpha f\left(n\frac{R}{2N}\right)-\frac{c_0}{2}\mathcal{F}f(0)
\end{align*}
for $n=1,\dots,N$, as well as the matrix 
$\bm{C}\in\mathbb{R}^{N\times N}$ with
\begin{align}
	\label{cij}
	\bm{C}_{i,j}=\begin{cases}
		c_k,& i<j\text{ with }j=ki\text{ for some }k\in\mathbb{N}.\\
		0, &\text{else},
	\end{cases}
\end{align}
where the matrix elements $c_k$ are the Fourier coefficients from Theorem \ref{prop2}.
\begin{prop}
	\label{propmatrix}
	The vector $\bm{\xi}$ is uniquely determined by 
 \begin{align}
 	\label{syseqsol}
 	\bm{\xi}=\bm{C}^{-1}\cdot\bm{\eta}~,
 \end{align}
i.e. it is the unique solution of the system of linear equations
$
\bm{\eta}=\bm{C}\cdot\bm{\xi}
$.
\end{prop}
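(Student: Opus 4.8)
The plan is to split the statement into two independent parts: first to verify that the vectors $\bm{\eta},\bm{\xi}$ and the matrix $\bm{C}$ satisfy the linear relation $\bm{\eta}=\bm{C}\cdot\bm{\xi}$, and then to show that $\bm{C}$ is invertible, so that this relation admits the unique solution $\bm{\xi}=\bm{C}^{-1}\bm{\eta}$.

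For the first part I would start from the series representation of Theorem \ref{prop2}(ii), namely $T_\alpha f(y)=\frac{c_0}{2}\mathcal{F}f(0)+\sum_{j=1}^\infty c_j\mathcal{F}f(2jy)$, and evaluate it at the sampling points $y=\frac{nR}{2N}$. Since $2j\cdot\frac{nR}{2N}=jn\frac{R}{N}$, subtracting $\frac{c_0}{2}\mathcal{F}f(0)$ yields
\begin{align*}
\eta_n=\sum_{j=1}^\infty c_j\,\hat{f}\!\left(jn\tfrac{R}{N}\right).
\end{align*}
Next I would pass from $\hat{f}$ to its truncation $\hat{f}_R$: because $\hat{f}_R\!\left(jn\frac{R}{N}\right)=0$ whenever $jn>N$ and equals $\xi_{jn}$ whenever $jn\leq N$, the infinite sum collapses to the finite sum $\sum_{j:\,jn\leq N}c_j\,\xi_{jn}$. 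Re-indexing by $m=jn$ (so that $n\mid m$ and $k=m/n$) turns this into $\eta_n=\sum_{m:\,n\mid m}c_{m/n}\,\xi_m$, which is exactly $(\bm{C}\bm{\xi})_n$ for the matrix (\ref{cij}), once one records that the term $k=1$ contributes the diagonal entry $c_1$.

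For the second part I would exploit the arithmetic structure of $\bm{C}$. A nonzero entry $\bm{C}_{i,j}$ requires $i\mid j$, hence $i\leq j$, so $\bm{C}$ is upper triangular, and all of its diagonal entries equal $c_1$. Therefore $\det\bm{C}=c_1^{N}$, and invertibility reduces to checking $c_1\neq 0$. From the explicit formula (\ref{cjdef}) one has $c_1=-2^{-\alpha}\Gamma(1+\alpha)/\big(\Gamma(\tfrac{\alpha}{2})\Gamma(\tfrac{\alpha}{2}+2)\big)$, which is finite and nonzero for every $\alpha>-1$ with $\alpha\neq 0$, since $\Gamma$ has no zeros and, in this range, the factors $\Gamma(1+\alpha)$ and $\Gamma(\tfrac{\alpha}{2}+2)$ are finite and nonzero while $\Gamma(\tfrac{\alpha}{2})$ is finite and nonzero away from its pole at $\alpha=0$. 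Hence $\bm{C}$ is invertible and $\bm{\xi}=\bm{C}^{-1}\bm{\eta}$ is the unique solution, which is the assertion.

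The main subtlety lies in the truncation step. The identity $\eta_n=\sum_{j}c_j\,\hat{f}(jnR/N)$ is exact, but its collapse to a \emph{finite} linear system is exact only when $\hat{f}$ is supported in $[-R,R]$; for general $f$ this band-limiting is precisely the approximation underlying the whole Fourier approximation approach, and the discarded terms with $jn>N$ form the aliasing/truncation error that one must argue to be negligible (or absorb into the modelling assumption). By contrast, the invertibility argument is routine once the triangular structure and the identity $\det\bm{C}=c_1^N$ are in place; the only genuine caveat there is the degenerate value $\alpha=0$, for which $c_1=0$ and $T_0 f\equiv\frac{1}{2}\mathcal{F}f(0)$ is constant, so that the proposition must tacitly assume $\alpha\neq 0$.
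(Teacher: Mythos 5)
Your proof is correct and follows essentially the same route as the paper: evaluate the series representation of Theorem \ref{prop2}(ii) at the points $n\frac{R}{2N}$, replace $\hat{f}$ by its truncation $\hat{f}_R$ so that the series collapses to the finite sum $\sum_{j\leq N/n}c_j\xi_{jn}$, and invert the resulting upper-triangular matrix with diagonal entries $c_1$. Your two refinements are both correct and sharpen the paper's argument: the paper likewise writes the truncation step only as an approximation ($\approx$), exact precisely when $\hat{f}$ is supported in $[-R,R]$; and the paper asserts the diagonal entries $c_1$ are nonzero without qualification, whereas at $\alpha=0$ one has $c_j=0$ for all $j\geq 1$, so $\bm{C}$ is the zero matrix and $T_0f$ is constant, meaning the proposition tacitly requires $\alpha\neq 0$ exactly as you observe.
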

\begin{proof}
	First note that $\hat{f}_R$ is continuous on the compact interval $[-R,R]$, as well as integrable and square-integrable on the real line. Furthermore, $\hat{f}_R$ converges in the $L^2$-norm to $\hat{f}$, hence
		$\mathcal{F}^{-1}\hat{f}_R\rightarrow\mathcal{F}^{-1}\hat{f}=f$
	in the $L^2$-norm as $R\rightarrow\infty$. By the Riemann-Lebesgue lemma, the Fourier transform $\hat{f}$ is bounded, uniformly continuous, and vanishes at infinity \cite{clasfour}, hence we approximate $\hat{f}$ by $\hat{f}_R$ in Equation (\ref{TfFourier2}) with $R$ chosen large enough. Then, 
	\begin{align*}
		\eta_n=T_\alpha f\left(n\frac{R}{2N}\right)-\frac{c_0}{2}\mathcal{F}f(0)=\sum\limits_{j=1}^\infty c_j\hat{f}\left(2jn\frac{R}{2N}\right)\approx\sum\limits_{j=1}^{\lfloor N/n\rfloor}c_j\hat{f}_R\left(jn\frac{R}{N}\right)=\sum\limits_{j=1}^{\lfloor N/n\rfloor}c_j\xi_{jn}
	\end{align*}
	for $n=1,\dots,N$ forms a system of linear equations, which in matrix form is given by 
	\begin{align*}
		\bm{\eta}=\bm{C}\cdot\bm{\xi}~,
	\end{align*}
	where $\bm{C}=\left(\bm{C}_{i,j}\right)_{i,j=1,\dots,N}$ with $\bm{C}_{i,j}$ as in Equation (\ref{cij})
	, i.e.
	\begin{align*}
		\bm{C}=
		\begin{tikzpicture}[baseline=-0.5ex]
			\matrix (m) [matrix of math nodes,
			left delimiter=(,
			right delimiter=),
			inner sep=2.5pt, 
			ampersand replacement=\&]{
				c_1	\&c_2	\&c_3	\&c_4	\&c_5	\&c_6	\&c_7\&c_8\&c_9\&\cdots\&c_{N-2}\&c_{N-1}\&c_N\\
				0	\&c_1	\&0		\&c_2		\&0		\&c_3	\&0 \&c_4\&0\&\cdots\&c_{N/2-1}\&0\&c_{N/2}\\
				0	\&0		\&c_1	\&0		\&0		\&c_2	\&0\&0\&c_3\&\cdots\&\cdots\&\cdots\&\cdots\\
				0	\&0		\&0	\&c_1		\&0		\&0	\&0\&c_2\&0	\&\cdots\&\cdots\&\cdots\&\cdots\\[5ex]
				\&\&\&\&\&\&\&\&\&\&0\&c_1\&0\\
				0\&\&\&\&\&\&\&\&\&\&\&0\&c_1\\
			} ;
			\draw[loosely dotted, thick] (m-4-1)-- (m-6-1);
			\draw[loosely dotted, thick] (m-6-1)-- (m-6-12);
			\draw[loosely dotted, thick] (m-4-4)-- (m-5-12);
		\end{tikzpicture}
		~.
	\end{align*}
In the illustration of the matrix $\bm{C}$ above it is assumed, without loss of generality, that $N$ is an even integer. 
By the construction of the system of linear equations, $\bm{C}$ is an upper triangular matrix with non-zero entries $c_1$ on its main diagonal. In particular, this means that its inverse $\bm{C}^{-1}$ exists, and $\bm{\xi}$ is uniquely determined by
\begin{align*}
	\bm{\xi}=\bm{C}^{-1}\cdot\bm{\eta}~.
\end{align*}
\end{proof}

\subsection{Band-limited interpolation}
\label{BLM}

We aim to reconstruct the Fourier transform $\hat{f}$ with a suitable interpolation. In the previous subsection we introduced the vector $\bm{\xi}$ with coordinates $\xi_n=\hat{f}(nR/N)$, $n=1,\dots,N$. Set $\hat{f}(-nR/N)=\hat{f}(nR/N)$ and $\hat{f}(0)=\mathcal{F}f(0)$, which we assume to be known for now. Note that this is in general not true. A simple workaround solving this is discussed in Section \ref{num}. Moreover, in the application examples of Section \ref{appl}, the value $\mathcal{F}f(0)$ is further specified.

Define the band-limited interpolation of the Fourier transform $\hat{f}$ by
\begin{align}
	\label{bli_interp}
	\widehat{f^{(N)}_R}=\sum\limits_{n=-N}^N\hat{f}\left(n\frac{R}{N}\right)\text{sinc}\left(\frac{N}{R}y-n\right),
\end{align}
where $\text{sinc}(x)=\sin\left(\pi x\right)/(\pi x)$ for $x\neq 0$ and $\text{sinc}(0)=1$. Furthermore, define the estimate 
\begin{align}
	\label{bli_inv_est}
	f^{(N)}_R(x)=\frac{R}{2\pi N}rect\left(\frac{xR}{2\pi N}\right)\sum\limits_{n=-N}^N\hat{f}\left(n\frac{R}{N}\right)e^{-ixnR/N},
\end{align}
where 
\begin{align*}
	rect(t)=\begin{cases}
		1,& \vert t\vert<1/2,\\
		0,&\vert t\vert>1/2,\\
		1/2,& t=1/2.
	\end{cases}
\end{align*}

	\begin{thm}
		\label{lastcor}
		Let $f\in L^1_{e,w,\alpha}(\mathbb{R})\cap L^2_e(\mathbb{R})$ and $B=N/(2R)$. Additionally, assume that $f\in C_e(\mathbb{R})$ with integrable first derivative $f'\in L_e^1(\mathbb{R})$.
		\begin{enumerate}[(i)]
			\item The band-limited interpolation $\widehat{f^{(N)}_R}$ converges to $\hat{f}$ in the $L^2$-norm, i.e. it holds that 
			\begin{align*}
				\lim\limits_{B\rightarrow\infty}\lim\limits_{N\rightarrow\infty}\left\Vert \hat{f}-\widehat{f^{(N)}_R}\right\Vert_2=0.
			\end{align*}
		\item The estimate $f^{(N)}_R(x)$ converges to $f$ in the $L^2$-norm. It holds that 
	\begin{align*}
		\lim\limits_{B\rightarrow\infty}\lim\limits_{N\rightarrow\infty}\left\Vert f-f_R^{(N)}\right\Vert_2=0.
	\end{align*}
		\end{enumerate}
\end{thm}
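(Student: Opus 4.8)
The plan is to prove parts (i) and (ii) together by first observing that the interpolation $\widehat{f^{(N)}_R}$ and the estimate $f^{(N)}_R$ form a Fourier pair. A direct computation of the inverse transform of a single shifted cardinal function, in the paper's convention, gives
\begin{align*}
	\mathcal{F}^{-1}\left[\mathrm{sinc}\left(\tfrac{N}{R}y-n\right)\right](x)=\frac{R}{2\pi N}\,\mathbbm{1}_{\{|x|<\pi N/R\}}(x)\,e^{-ixnR/N},
\end{align*}
since $\mathrm{sinc}(\tfrac{N}{R}\,\cdot)$ is the inverse transform of the indicator of $[-\pi N/R,\pi N/R]$ and the shift by $n$ becomes a modulation. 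Summing over $|n|\le N$ shows $f^{(N)}_R=\mathcal{F}^{-1}\widehat{f^{(N)}_R}$, with $f^{(N)}_R$ supported in $[-\pi N/R,\pi N/R]$. By the Plancherel theorem $\mathcal{F}^{-1}$ is an $L^2$-isometry up to the factor $1/(2\pi)$, so $\bigl\Vert\hat{f}-\widehat{f^{(N)}_R}\bigr\Vert_2$ and $\bigl\Vert f-f^{(N)}_R\bigr\Vert_2$ differ only by that constant; hence (i) and (ii) are equivalent and it suffices to establish one inner and one outer limit.

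For the outer limit (the band parameter $R\to\infty$) I would record that $f\in L^2_e(\mathbb{R})$ gives $\hat{f}\in L^2$ by Plancherel, so $\Vert\hat{f}-\hat{f}_R\Vert_2^2=\int_{|y|>R}|\hat{f}(y)|^2\,dy\to 0$. Thus, once the inner limit is known, $\lim_{N\to\infty}\Vert\hat{f}-\widehat{f^{(N)}_R}\Vert_2=\Vert\hat{f}-\hat{f}_R\Vert_2$ by continuity of the norm, and letting $R\to\infty$ closes the argument. The hypotheses $f\in L^1_{e,w,\alpha}$, $f\in C_e$ and $f'\in L^1_e$ enter through the Riemann–Lebesgue lemma and one integration by parts, which make $\hat{f}$ bounded, uniformly continuous and of order $\mathcal{O}(1/|y|)$; this guarantees that each point sample $\hat{f}(nR/N)$ is meaningful, that the cardinal series is well defined, and that the Riemann sums below converge.

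The heart of the proof is the inner limit, which I carry out in the $x$-domain as $f^{(N)}_R\to g_R$ in $L^2$, where $g_R:=\mathcal{F}^{-1}\hat{f}_R$; via $f^{(N)}_R=\mathcal{F}^{-1}\widehat{f^{(N)}_R}$ this is equivalent to $\widehat{f^{(N)}_R}\to\hat{f}_R$. On its support the estimate is the exponential sum $f^{(N)}_R(x)=\tfrac{R}{2\pi N}\sum_{n=-N}^N\hat{f}(nR/N)e^{-ixnR/N}$, and the orthogonality $\int_{-\pi N/R}^{\pi N/R}e^{-ix(n-m)R/N}\,dx=\tfrac{2\pi N}{R}\delta_{nm}$ yields
\begin{align*}
	\left\Vert f^{(N)}_R\right\Vert_2^2=\frac{R}{2\pi N}\sum_{n=-N}^N\left|\hat{f}\!\left(\tfrac{nR}{N}\right)\right|^2\xrightarrow[N\to\infty]{}\frac{1}{2\pi}\int_{-R}^R|\hat{f}(y)|^2\,dy=\left\Vert g_R\right\Vert_2^2,
\end{align*}
the limit being a Riemann sum of the continuous function $|\hat{f}|^2$ on $[-R,R]$ together with Plancherel. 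For weak convergence I would test against $\varphi\in C_c^\infty(\mathbb{R})$: for $N$ so large that $\mathrm{supp}\,\varphi\subseteq[-\pi N/R,\pi N/R]$ the pairing $\langle f^{(N)}_R,\varphi\rangle=\tfrac{R}{2\pi N}\sum_{n=-N}^N\hat{f}(nR/N)\,\overline{\mathcal{F}\varphi(nR/N)}$ is again a Riemann sum, converging to $\tfrac{1}{2\pi}\int_{-R}^R\hat{f}(y)\,\overline{\mathcal{F}\varphi(y)}\,dy=\langle g_R,\varphi\rangle$ by Parseval, continuity of $\hat{f}$ being exactly what makes these sums converge. The uniform norm bound from the display extends this to weak convergence $f^{(N)}_R\rightharpoonup g_R$ on all of $L^2$, and in a Hilbert space weak convergence together with convergence of norms forces strong convergence, so $f^{(N)}_R\to g_R$ in $L^2$.

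The step I expect to be the main obstacle is precisely this strong $L^2$-convergence of the truncated cardinal series: the support of $f^{(N)}_R$ grows like $[-\pi N/R,\pi N/R]$, and the interpolant exhibits Gibbs-type oscillations near the band edges $\pm R$ where $\hat{f}_R$ jumps, so a naive split of $\Vert f^{(N)}_R-g_R\Vert_2^2$ into a Riemann-sum error on the growing interval plus a tail fails, the Riemann-sum error not being small near the edges. The weak-plus-norm argument circumvents this by never requiring pointwise control near the edge: the edge and aliasing contributions are absorbed into the vanishing discrepancy between $\Vert f^{(N)}_R\Vert_2$ and $\Vert g_R\Vert_2$. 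The decay $\hat{f}(y)=\mathcal{O}(1/|y|)$ supplied by $f'\in L^1_e$ is what secures convergence of all the Riemann sums and identifies the weak limit as $g_R$.
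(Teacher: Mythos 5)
Your proof is correct, but it takes a genuinely different route from the paper's. The paper stays inside classical sampling theory: it identifies $\widehat{f_R^{(N)}}$ with the truncated cardinal series $S_B^{(N)}\hat f$, $2B=N/R$, splits the error as $\bigl\Vert \hat f-S_B^{(N)}\hat f\bigr\Vert_2\leq\bigl\Vert \hat f-S_B\hat f\bigr\Vert_2+\bigl\Vert S_B\hat f-S_B^{(N)}\hat f\bigr\Vert_2$, removes the truncation term by letting $N\to\infty$ at fixed Nyquist rate $B$, and removes the aliasing term as $B\to\infty$ by invoking the Rahman--V\'ertesi theorem \cite{rahmanvertesi} for non-bandlimited functions; that citation is exactly where the hypotheses $f\in C_e(\mathbb{R})$, $f'\in L^1_e(\mathbb{R})$ are consumed, since the theorem needs $\hat f(y)=O\bigl(1/(1+\vert y\vert)^{1/2+\delta}\bigr)$. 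Part (ii) then follows, as in your first step, by applying $\mathcal{F}^{-1}$. You argue in the opposite order and with no sampling-theory input: the Fourier-pair relation $f_R^{(N)}=\mathcal{F}^{-1}\widehat{f_R^{(N)}}$ makes (i) and (ii) equivalent via Plancherel; the inner limit is taken at fixed window $R$ with refining samples, where strong convergence $f_R^{(N)}\to\mathcal{F}^{-1}\hat f_R$ follows from the exact norm identity (orthogonality of the exponentials $e^{-ixnR/N}$ on $[-\pi N/R,\pi N/R]$), Riemann-sum convergence of the norms and of the pairings against $C_c^\infty$ test functions, and the Radon--Riesz property of Hilbert spaces (weak convergence plus convergence of norms implies strong convergence); the outer limit is then just $\Vert\hat f-\hat f_R\Vert_2\to 0$. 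What each buys: the paper's argument is shorter because the hard estimate is outsourced to the literature; yours is self-contained and in fact needs strictly weaker hypotheses --- every step uses only $f\in L^1_e(\mathbb{R})\cap L^2_e(\mathbb{R})$ (continuity of $\hat f$ for the Riemann sums on the compact window, $\hat f\in L^2$ for the outer limit), so the decay $\hat f(y)=O(1/\vert y\vert)$ that you credit with securing the Riemann sums is never actually used, and the smoothness assumptions on $f$ could be dropped from your version of the statement. One bookkeeping difference worth noting: you prove the iterated limit $\lim_{R\to\infty}\lim_{N\to\infty}$ with $R$ fixed in the inner limit (so the rate $N/R\to\infty$ already there), whereas the paper fixes $B$ in its inner limit so that the window $R=N/(2B)$ grows with $N$; since the estimator is parametrized by $(N,R)$, your reading is a natural --- arguably the more natural --- formalization of the stated double limit, and both are rigorous versions of the intended convergence.
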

\begin{proof}
\begin{enumerate}[(i)]
	\item Define the cardinal series of $\hat{f}$ by
	\begin{align*}
		S_B\hat{f}(y)=\sum\limits_{n=-\infty}^\infty\hat{f}\left(\frac{n}{2B}\right)\text{sinc}\left(2By-n\right).
	\end{align*}
	The \emph{Whittaker-Shannon-Kotel'nikov sampling theorem} \cite{bracewell,marks,zayed,anastassiou}, widely known in the field of sampling theory, states that any bandlimited function $u$, i.e. $u\in L^p(\mathbb{R})$, $1\leq p<\infty$, with compactly supported Fourier transform $\hat{u}$ on $[-B,B]$, can be completely reconstructed in from its cardinal series $S_B u$ in $L^p$ \cite[Thm. 6]{anastassiou}. The optimal sampling rate $2B$ is called \emph{Nyquist-rate}. In our case $u=\hat{f}$.
	
	Denote by
	\begin{align}
		\label{fhattrunccardinal}
		S^{(N)}_B\hat{f}(y)=\sum\limits_{n=-N}^N\hat{f}\left(\frac{n}{2B}\right)\text{sinc}\left(2By-n\right),
	\end{align}
	the truncated cardinal series of $\hat{f}$.
	Suppose that $\hat{f}$ is band-limited, i.e. the function $f\in L^1_{e,w,\alpha}(\mathbb{R})\cap L^2_{e}(\mathbb{R})$ has compact support $[-B,B]$ for some $B>0$.
	Then, $\hat{f}\in L^2_{e}(\mathbb{R})$ is completely determined by its cardinal series $S_B\hat{f}$, and 
	\begin{align}
		\label{L2trunc}
		S_B^{(N)}\hat{f}\rightarrow S_B\hat{f}=\hat{f},
	\end{align}
	in the $L^2$-sense as $N\rightarrow\infty$. 
	
	The assumption that $f$ has compact support is certainly too strong, and the Shannon sampling theorem applied to non-bandlimited functions will result in interpolation errors known as \emph{aliasing}. To ensure $L^2$-convergence of the bandlimited interpolation when $\hat{f}$ is not band-limited, we refer to the results in \cite{rahmanvertesi}. For $p>1$, define 
	\begin{align*}
		\mathcal{F}^p(\delta)=\left\{u:\mathbb{R}\rightarrow\mathbb{C}\text{ measurable, }u(x)=O\left(\frac{1}{\left(1+\vert x\vert\right)^{1/p+\delta}}\right)\right\},
	\end{align*}
	and $\mathcal{F}^p=\cup_{\delta>0}\mathcal{F}^p(\delta)$. Denote by $\mathcal{R}$ the set of all measurable functions $g:\mathbb{R}\rightarrow\mathbb{C}$ that are Riemann-integrable on every finite interval. Then, 
	\begin{align*}
		\left\Vert u-S_Bu\right\Vert_p\rightarrow 0
	\end{align*}
	as $B\rightarrow\infty$ for all $g\in\mathcal{F}^p\cap\mathcal{R}$. We need $\left\Vert\hat{f}-S_B\hat{f}\right\Vert_{L^2}\rightarrow 0$ as $B\rightarrow\infty$, i.e. $\hat{f}\in\mathcal{F}^2(\delta)$ needs to hold for some $\delta>0$. 
	
	Recall, that for any function $f\in C^n(\mathbb{R})$ with integrable derivatives $f^{(k)}\in L^1(\mathbb{R})$ for all $k\leq n$, its Fourier transform $\hat{f}$ decays as
		$O\left(\frac{1}{\left(1+\vert y\vert\right)^n}\right)$, $\vert y\vert\rightarrow\infty.$
	Setting $n=1$ yields the desired result if additionally to $f\in L^1_{e,w,\alpha}(\mathbb{R})\cap L^2_e(\mathbb{R})$ it is assumed that $f\in C^1_e(\mathbb{R})$ with integrable first derivative $f'\in L^1_e(\mathbb{R})$. Then, 
	$\hat{f}(y)=O\left(1/(1+\vert y\vert)^{1/2+\delta}\right)$ 
	with $\delta=1/2$. Furthermore, $\hat{f}$ is uniformly continuous on $\mathbb{R}$ and bounded, in particular it is integrable over all finite intervals. Hence, 
	\begin{align*}
		\left\Vert\hat{f}-S_B\hat{f}\right\Vert_2\rightarrow 0
	\end{align*}
	as $B\rightarrow\infty$, i.e. the cardinal series $S_B\hat{f}$ converges to $\hat{f}$ in the $L^2$-norm. The above and the $L^2$-convergence in (\ref{L2trunc}) imply that 
	\begin{align*}
		\lim\limits_{B\rightarrow\infty}\lim\limits_{N\rightarrow\infty}\left\Vert \hat{f}-S_B^{(N)}\hat{f}\right\Vert_2\leq
		\lim\limits_{B\rightarrow\infty}\lim\limits_{N\rightarrow\infty}\left(\left\Vert \hat{f}-S_B\hat{f}\right\Vert_2+\left\Vert S_B\hat{f}-S_B^{(N)}\hat{f}\right\Vert_2\right)=0.
	\end{align*}
	Setting $2B=N/R$ in the truncated cardinal series (\ref{fhattrunccardinal}) yields the interpolant
	\begin{align*}
		\widehat{f^{(N)}_R}=\sum\limits_{n=-N}^N\hat{f}\left(n\frac{R}{N}\right)\text{sinc}\left(\frac{N}{R}y-n\right).
	\end{align*}
\item Note that 
\begin{align*}
	\left\{\mathcal{F}^{-1}\text{sinc}\left(2B\cdot-n\right)\right\}(x)=\frac{1}{4\pi B}rect\left(\frac{x}{4\pi B}\right) e^{-ixn/(2B)},
\end{align*}
and applying the inverse Fourier transform $\mathcal{F}^{-1}$ to the band-limited interpolation $\widehat{f_R^{(N)}}$ yields
\begin{align*}
	f^{(N)}_R(x)=\frac{R}{2\pi N}rect\left(\frac{xR}{2\pi N}\right)\sum\limits_{n=-N}^N\hat{f}\left(n\frac{R}{N}\right)e^{-ixnR/N}.
\end{align*}
As the $L^2$-convergence of the interpolant to $\hat{f}$ is preserved under the inverse Fourier transform, we ultimately get 
\begin{align*}
	\lim\limits_{B\rightarrow\infty}\lim\limits_{N\rightarrow\infty}\left\Vert f-f_R^{(N)}\right\Vert_2=0.
\end{align*}
\end{enumerate}
\end{proof}

\begin{rem}
	\label{linintremark}
		\begin{enumerate}[(i)]
			\item Under the assumptions of Theorem \ref{lastcor} the cardinal series $S_B\hat{f}$ converges uniformly to $\hat{f}$ as $B\rightarrow\infty$ with error estimate 
			\begin{align*}
				\left\Vert\hat{f}-S_B\hat{f}\right\Vert_\infty
				\leq\frac{1}{\pi}\int\limits_{\vert x\vert>B}\vert f(x)\vert dx.
			\end{align*}
			This is a direct consequence of \cite[Theorem 1]{brown}. 
			\item Alternatively, it is also possible to use piecewise linear interpolation. Denote by $\hat{f}_{N,R}$ the piecewise linear interpolation polynomial constructed from the interpolation points $\bm{\xi}=\left(\hat{f}(kR/N)\right)_{k=-N}^N$ of Proposition \ref{propmatrix}. 
			It is well known that the Fourier transform $\hat{f}$ is $n$-times continuously differentiable if the function $f$ is piecewise continuous and $x^kf(x)$ is integrable for all $k\leq n$ \cite{kolmfomin}. Restricted to compact intervals, boundedness follows. 
			Hence, if $f\in L^1_{e,w,\alpha}$ is additionally assumed to satisfy $xf(x), x^2f(x)\in L^1(\mathbb{R})$, then $\hat{f}$ is twice continuously differentiable. It follows that linear interpolation $\hat{f}_{N,R}$ converges to $\hat{f}_R$ uniformly as $N\rightarrow \infty$, and the error estimate
%
			\begin{align*}
				\left\Vert \hat{f}_R-\hat{f}_{N,R}\right\Vert_\infty\leq C\left(\frac{R}{N}\right)^{2}\left\Vert (\hat{f}_R)^{''}\right\Vert_\infty
			\end{align*}
			holds, where $C$ is some positive constant \cite[Ch. 8.3]{quarteroni} and $(\hat{f}_R)''$ denotes the second derivative of $\hat{f}_R$. 
			This implies convergence in the $L^2$-norm over $[0,R]$ which is preserved by the Fourier transform, thus
			\begin{align*}
				\mathcal{F}^{-1}\hat{f}_{N,R}\rightarrow\mathcal{F}^{-1}\hat{f}_R
			\end{align*}
			in the $L^2$-norm as $N\rightarrow\infty$. 
			Furthermore, $\mathcal{F}^{-1}\hat{f}_{N,R}\rightarrow\mathcal{F}^{-1}\hat{f}_R$  in the $L^2$-norm as $N\rightarrow\infty$. 
		\end{enumerate}
\end{rem}

\subsection{Smoothing}
\label{mollifier}
	In applications, there is a possibility that the function $T_\alpha f$ is contaminated by noise, which directly affects the solution of the linear equation $\bm{\eta}=\bm{C}\cdot\bm{\xi}$ from Proposition \ref{propmatrix}. 
	To cope with noisy data of $T_\alpha f$, we fix a non-negative, even \emph{mollifier function} $e_\gamma(x)$, $\gamma>0$, that integrates to $1$, and converges to the Dirac-Delta function as $\gamma$ tends to $0$. Additionally, determine the respective \emph{reconstruction kernel} $\psi_\gamma$ by the relationship
$
	\psi_\gamma=\mathcal{F}e_\gamma~.
	$
	Suitable mollifiers and their respective reconstruction kernels are given in Example \ref{ex2} of Section \ref{noisy}. A smoothed estimate of $f$ can then be computed by the following Proposition. 
\begin{prop}
Under the assumptions of Theorem \ref{lastcor}, a smoothed estimate of $f$ is given by 
\begin{align}
	f^{(N)}_{R,\gamma}=\mathcal{F}^{-1}\left(\widehat{f^{(N)}_R}\cdot\psi_\gamma\right)\rightarrow f
\end{align}
as $\gamma\rightarrow 0$, $N\rightarrow\infty$, $R\rightarrow\infty$, where $\widehat{f^{(N)}_R}=S_B$ is the interpolation given in Equation (\ref{bli_interp}). The convergence to the function $f$ is understood in the $L^2$-sense. Uniform convergence is guaranteed if $f\in C_b(\mathbb{R})$.
\end{prop}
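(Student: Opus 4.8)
The plan is to recognize the smoothed estimate as an honest convolution and then split the error into a mollification part and an interpolation part, the latter being controlled by Theorem~\ref{lastcor}(ii). Since $\widehat{f^{(N)}_R}=\mathcal{F}f^{(N)}_R$ and $\psi_\gamma=\mathcal{F}e_\gamma$ by construction, the convolution theorem for the Fourier transform gives
\begin{align*}
	f^{(N)}_{R,\gamma}=\mathcal{F}^{-1}\bigl(\mathcal{F}f^{(N)}_R\cdot\mathcal{F}e_\gamma\bigr)=f^{(N)}_R\ast e_\gamma,
\end{align*}
where $(u\ast v)(x)=\int_\mathbb{R}u(x-t)v(t)\,dt$. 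This identity is legitimate because $f^{(N)}_R$ is bounded and compactly supported (it carries the cut-off factor $rect$ in Equation~(\ref{bli_inv_est})), while $e_\gamma$ is an integrable, square-integrable mollifier, so both factors lie in $L^1(\mathbb{R})\cap L^2(\mathbb{R})$ and Fourier inversion applies to their convolution.

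With this reformulation, for either norm I would estimate
\begin{align*}
	\bigl\Vert f-f^{(N)}_{R,\gamma}\bigr\Vert\leq\bigl\Vert f-f\ast e_\gamma\bigr\Vert+\bigl\Vert(f-f^{(N)}_R)\ast e_\gamma\bigr\Vert.
\end{align*}
The interpolation term is handled by Young's inequality: in the $L^2$ case $\Vert(f-f^{(N)}_R)\ast e_\gamma\Vert_2\leq\Vert f-f^{(N)}_R\Vert_2\,\Vert e_\gamma\Vert_1=\Vert f-f^{(N)}_R\Vert_2$, which tends to $0$ as $N,R\to\infty$ by Theorem~\ref{lastcor}(ii), and uniformly in $\gamma$ since $\Vert e_\gamma\Vert_1=1$. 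The mollification term $\Vert f-f\ast e_\gamma\Vert_2$ tends to $0$ as $\gamma\to0$ because $(e_\gamma)_{\gamma>0}$ is an approximate identity (non-negative, even, unit mass, concentrating at the origin) and $f\in L^2(\mathbb{R})$. This yields the claimed $L^2$-convergence.

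For the uniform statement I would first observe that the standing assumptions of Theorem~\ref{lastcor}, namely $f\in C^1_e(\mathbb{R})$ with $f'\in L^1_e(\mathbb{R})$, already force $f$ to be uniformly continuous, since $\vert f(x)-f(x')\vert\leq\int_{x'}^{x}\vert f'(t)\vert\,dt$ and this integral tends to $0$ uniformly as $\vert x-x'\vert\to0$ by absolute continuity of the Lebesgue integral. Hence $\Vert f-f\ast e_\gamma\Vert_\infty\to0$ as $\gamma\to0$ by the classical approximate-identity estimate (splitting the convolution integral into $\vert t\vert\le\delta$ and $\vert t\vert>\delta$). For the interpolation term I would instead use the Cauchy--Schwarz form of Young's inequality, $\Vert(f-f^{(N)}_R)\ast e_\gamma\Vert_\infty\leq\Vert f-f^{(N)}_R\Vert_2\,\Vert e_\gamma\Vert_2$, which for each fixed $\gamma$ tends to $0$ as $N,R\to\infty$ by Theorem~\ref{lastcor}(ii).

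The main subtlety is the order of the limits. In the uniform bound the factor $\Vert e_\gamma\Vert_2$ blows up as $\gamma\to0$, so the estimate is only useful if $N$ and $R$ are sent to infinity before $\gamma$ is sent to $0$; the iterated limit (inner $N,R\to\infty$, outer $\gamma\to0$) is exactly what makes the argument go through, and I would state the uniform convergence in that sense. The remaining steps---verifying the convolution theorem for the concrete pair $f^{(N)}_R,e_\gamma$ and the standard mollifier estimates---are routine once the convolution identity $f^{(N)}_{R,\gamma}=f^{(N)}_R\ast e_\gamma$ is in place.
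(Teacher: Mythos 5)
Your proof is correct, and it shares the same skeleton as the paper's: both split the error into a mollification part (handled by approximate-identity theory) and an interpolation part (handled by Theorem \ref{lastcor}(ii)). The difference is where the estimates live. The paper stays on the frequency side: it notes $f\ast e_\gamma=\mathcal{F}^{-1}\left(\hat{f}\cdot\psi_\gamma\right)\rightarrow f$ as $\gamma\rightarrow 0$ (citing the approximate-identity theorem in Grafakos), and then argues that since $\widehat{f^{(N)}_R}\rightarrow\hat{f}$ in $L^2$, multiplication by the bounded function $\psi_\gamma$ and the $L^2$-continuity of $\mathcal{F}^{-1}$ give $f^{(N)}_{R,\gamma}\rightarrow f\ast e_\gamma$; your Young's-inequality bound $\Vert(f-f^{(N)}_R)\ast e_\gamma\Vert_2\leq\Vert f-f^{(N)}_R\Vert_2\Vert e_\gamma\Vert_1$ is precisely the Plancherel-dual of that step, so in the $L^2$ case the two arguments are equivalent in content. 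Where your write-up genuinely adds value is the uniform-convergence claim: the paper only establishes $\Vert f-f\ast e_\gamma\Vert_\infty\rightarrow 0$ and says nothing about how the interpolation error is controlled in the sup-norm, whereas you supply the missing estimate $\Vert(f-f^{(N)}_R)\ast e_\gamma\Vert_\infty\leq\Vert f-f^{(N)}_R\Vert_2\Vert e_\gamma\Vert_2$ and correctly flag that, since $\Vert e_\gamma\Vert_2$ blows up as $\gamma\rightarrow 0$, the limits must be iterated with $N,R\rightarrow\infty$ taken inside — which matches the iterated-limit convention already used in Theorem \ref{lastcor}. Two minor caveats: your uniform argument tacitly requires $e_\gamma\in L^2(\mathbb{R})$, which holds for the paper's Example \ref{ex2} mollifiers but is not part of the stated hypotheses; and your justification of $\mathcal{F}f^{(N)}_R=\widehat{f^{(N)}_R}$ should invoke $L^2$-Fourier inversion (the sinc series is not in $L^1$), which your observation that $f^{(N)}_R$ is bounded with compact support indeed makes legitimate.
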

	\begin{proof}
	For a given mollifier $e_\gamma$ we consider the convolution 
	\begin{align}
		\label{fgamma}
		f_\gamma(x)=\left(f\ast e_\gamma\right)(x)=\mathcal{F}^{-1}\left(\mathcal{F}f\cdot\mathcal{F}e_\gamma\right)(x)=\mathcal{F}^{-1}\left(\hat{f}\cdot\psi_\gamma\right)(x),
	\end{align}
	which converges to the function $f$ in the $L^2$-norm as $y\rightarrow\infty$ since $f\in L^2_e(\mathbb{R})$. Uniform convergence is guaranteed if $f\in C_b(\mathbb{R})$ \cite[Theorem 1.2.19]{clasfour}. 
	Applying (\ref{fgamma}) to $\widehat{f^{(N)}_R}$, we then compute the smoothed estimate 
	\begin{align*}
		f^{(N)}_{R,\gamma}=\mathcal{F}^{-1}\left(\widehat{f^{(N)}_R}\cdot\psi_\gamma\right),
	\end{align*}
where $L^2$-convergence is preserved by the inverse Fourier transform.
	\end{proof}

\section{Applications}
\label{appl}

In the following Section, we briefly present two applications for the inversion of $\alpha$-sine and cosine transforms. First, harmonizable symmetric $\alpha$-stable processes as well as their connection to $\alpha$-sine transforms are outlined. The theory of complex stable measures and stochastic integrals can be quite technical, and a detailed discussion would go beyond the scope of this work. We refer to \cite{gennady} for a complete introduction to these processes. The second application deals with the inversion of the two-dimensional spherical $\alpha$-cosine transform.

\subsection{Stationary real harmonizable symmetric $\alpha$-stable processes}

Consider a probability space $(\Omega,\mathcal{F},P)$. Define the spaces $L^0(\Omega)$ and $L^0_c(\Omega)$ of real-valued and complex valued random variables on this probability space, respectively. Then, every element $X\in L^0_c(\Omega)$ is of the form $X=X_1+iX_2$ with $X_1,X_2\in L^0(\Omega)$.
A real-valued \emph{symmetric $\alpha$-stable random variable $X\sim S\alpha S(\sigma)$} on this probability space is defined by the characteristic function 
\begin{align*}
	\mathbb{E}\left[e^{isX}\right]=\exp\left\{-\sigma^\alpha\vert s\vert^\alpha\right\},
\end{align*}
where $\sigma>0$ is called the \emph{scale parameter} of $X$ and $\alpha\in(0,2]$ its \emph{index of stability}. In the multivariate case, a real-valued symmetric $\alpha$-stable random vector $\bm{X}=(X_1,\dots,X_n)$ is defined by its joint characteristic function
\begin{align*}
	\mathbb{E}\left[\exp\left\{i(\bm{s},\bm{X})\right\}\right]=\exp\left\{-\int\limits_{S^{n-1}}\left\vert(\bm\theta,\bm{s})\right\vert^\alpha\Gamma(d\bm{\theta})\right\},
\end{align*}
where $(\bm{x},\bm{y})$ denotes the scalar product of two vectors $\bm{x},\bm{y}\in\mathbb{R}^n$, and $S^{n-1}$ is the unit sphere in $\mathbb
R^n$. The measure $\Gamma$ is called the \emph{spectral measure} of $\bm{X}$. It is unique, finite and symmetric in the case $0<\alpha<2$ \cite[Thm. 2.4.3]{gennady}.

Let us define the notion of complex random measure. 
Let $(E,\mathcal{E})$ be a measurable space, and let $(S^1,\mathcal{B}(S^1))$ be the measurable space on the unit circle $S^1$ equipped with the Borel $\sigma$-algebra $\mathcal{B}(S^1)$. Let $k$ be a measure on the product space $(E\times S^1,\mathcal{E}\times\mathcal{B}(S^1))$, and let
$$\mathcal{E}_0=\left\{A\in\mathcal{E}:k(A\times S^1)<\infty\right\}.$$
A \emph{complex-valued $S\alpha S$ random measure} on $(E,\mathcal{E})$ is an independently scattered, $\sigma$-additive, complex-valued set function 
$$M:\mathcal{E}_0\rightarrow L^0_c(\Omega)$$ 
such that the real and imaginary part of $M(A)$, i.e. the vector $(M^{(1)}(A),M^{(2)}(A))=\left(Re(M(A)),Im(M(A))\right)$, is jointly $S\alpha S$ with spectral measure $k(A\times\cdot)$ for every $A\in\mathcal{E}_0$ \cite[Def. 6.1.2]{gennady}. We refer to $k$ as the \emph{circular control measure} of $M$, and denote by $m(A)=k(A\times S^1)$ the \emph{control measure} of $M$. Furthermore, $M$ is isotropic if and only if its circular control measure is of the form $$k=m\gamma,$$ where $\gamma$ is the uniform probability measure on $S^1$.

A stochastic integral with respect to a complex $S\alpha S$ random measures $M$ is defined by
\begin{align*}
	I(u)=\int\limits_Eu(x)M(dx),
\end{align*}
where $f:E\rightarrow\mathbb{C}$ is a measurable, complex-valued function from the space $L^\alpha(E,m)$. The integral $I(u)$ is well-defined by \cite[Prop. 6.2.2]{gennady}. 

Let $(E,\mathcal{E})=(\mathbb{R},\mathcal{B})$ and $u(t,x)=e^{itx}$. Then, the stochastic process $\left\{X(t):t\in\mathbb{R}\right\}$ defined by 
\begin{align*}
	X(t)=Re\left(I\left(u(t,\cdot)\right)\right)=Re\left(\int\limits_\mathbb{R}e^{itx}M(dx)\right),
\end{align*}
where $M$ is a complex $S\alpha S$ random measure on $(\mathbb{R},\mathcal{B})$ with finite circular control measure $k$ (equivalently, with finite control measure $m$), is called a \emph{real harmonizable $S\alpha S$ process}. 

By \cite[Thm. 6.5.1]{gennady}, this process is stationary if and only if $M$ is isotropic, i.e. its spectral measure is of the form $k=m\gamma$. 
For all $n\in\mathbb{N}$ and $t_1,\dots,t_n\in\mathbb{R}$, the characteristic function of the finite-dimensional distributions of the process $X$ is given by 
\begin{align}
\label{charfn}
\mathbb{E}\left[\exp\left\{i\sum\limits_{i=1}^ns_iX_{t_i}\right\}\right]=\exp\left\{-\lambda_\alpha\int\limits_\mathbb{R}\left\vert\sum\limits_{j,k=1}^ns_js_k\cos\left(\left(t_k-t_j\right)x\right)\right\vert^{\alpha/2}m(dx)\right\}
\end{align}
with constant
$
	\lambda_\alpha=\frac{1}{2\pi}\int\limits_0^{2\pi}\vert\cos\left(x\right)\vert^\alpha dx
$
, see \cite[Proposition 6.6.3]{gennady}.

The \emph{codifference function} $\tau(t)$ of a $S\alpha S$ stochastic process is defined as the codifference of the random variables $X_0$ and $X_t$, i.e.
\begin{align}
\label{codiff}
	\tau(t)=\Vert X_0\Vert_\alpha^\alpha+\Vert X_t\Vert_\alpha^\alpha-\Vert X_t-X_0\Vert_\alpha^\alpha,
\end{align}
where $\Vert\cdot\Vert_\alpha$ is the scale parameter of $X_0, X_t$ and $X_t-X_0$, respectively. 

\subsection{Problem setting and inversion}
Let $X=\left\{X_t:t\in\mathbb{R}\right\}$ be a stationary real harmonizable $S\alpha S$ process with finite circular control measure $k$, and suppose its control measure $m$ has an even, non-negative density function $f\in L^1_e(\mathbb{R})\cap L^2_e(\mathbb{R})$ with respect to the Lebesgue measure on $\mathbb{R}$. We refer to the function $f$ as the \emph{spectral density}. It follows that $m(dx)=f(x)dx$ in Equation (\ref{charfn}), and the random variables $X_0, X_t$ and $X_t-X_0$ in (\ref{codiff}) are $S\alpha S$ random variables with the following scale parameters.

First, it holds that 
$X_t\sim S\alpha S(\sigma)$ with 
\begin{align*}
	\sigma=\left(\lambda_\alpha\int\limits_\mathbb{R}f(x)dx\right)^{1/\alpha}=\left(\lambda_\alpha m(\mathbb{R})\right)^{1/\alpha}~.
\end{align*}
In particular, this yields $m(\mathbb{R})=\sigma^\alpha/\lambda_\alpha$. By stationarity of $X$, the random variable $X_0$ has the same scale parameter $\sigma$. 

Similarly, for $X_t-X_0$ Equation (\ref{charfn}) yields 
\begin{align*}
	\mathbb{E}\left[is \left(X_t-X_0\right)\right]
	&=\exp\left\{-\lambda_\alpha\int\limits_\mathbb{R}\Big\vert s ^2\Big(2-cos(tx)-{cos(-tx)}\Big)\Big\vert^{\alpha/2}f(x)dx \right\}\\
	&=\exp\left\{-\vert s \vert^\alpha \lambda_\alpha\left(\int\limits_\mathbb{R}\Big\vert 2-2{cos(tx)}\Big\vert^{\alpha/2}f(x)dx\right) \right\}\\
	&=\exp\left\{-\vert s \vert^\alpha2^\alpha \lambda_\alpha\left(\int\limits_\mathbb{R}\left\vert \sin\left(\frac{tx}{2}\right)\right\vert^{\alpha}f(x)dx\right)\right\}~.
\end{align*}
It follows that $X_t-X_0\sim S\alpha S(\tilde\sigma)$ with
\begin{align*}
	\tilde\sigma=2\left(\lambda_\alpha\int\limits_\mathbb{R}\left\vert \sin\left(\frac{tx}{2}\right)\right\vert^{\alpha}f(x)dx\right)^{1/\alpha}~.
\end{align*}
Ultimately, the codifference function in Equation (\ref{codiff}) expands to 
\begin{align*}
	\tau(t)&=2\sigma^\alpha-\tilde\sigma^\alpha=2\sigma^\alpha-2^\alpha \lambda_\alpha\int\limits_\mathbb{R}\left\vert\sin\left(\frac{tx}{2}\right)\right\vert^{\alpha}f(x)dx.
\end{align*}
 Suppose that the spectral density $f$ is an even, integrable function. Then rearranging the above yields
 \begin{align}
 \label{gfromtau}
  T_\alpha f\left(t\right)=\int\limits_0^\infty\left\vert\sin\left(tx\right)\right\vert^{\alpha}f(x)dx
  =\frac{2\sigma^\alpha-\tau(2t)}{2^{\alpha+1} \lambda_\alpha}
  =:g(t).
\end{align}

Assume that the scale parameter $\sigma$, the index of stability $\alpha\in(0,2)$ and the codifference function $\tau$ are known. Then, $\mathcal{F}f(0)=m(\mathbb{R})=\sigma^\alpha/\lambda_\alpha$.
Furthermore, set $\bm{\eta}=(\eta_1,\dots,\eta_N)$ with $\eta_n=g\left(n\frac{R}{2N}\right)-\frac{c_0\sigma^\alpha}{2\lambda_\alpha}$, $n=1,\dots, N$, and solve the system of linear equations $\bm{\eta}=\bm{C}\bm{\xi}$, where $\bm{\xi}=(\xi_1,\dots,\xi_N)$ with $\xi_n=\hat{f}\left(n\frac{R}{N}\right)$, $n=1,\dots, N$. Apply Theorem \ref{lastcor} to compute the estimate of the spectral density $f$.

\begin{rem}
	In statistical inference, the scale parameter $\sigma$, the index of stability $\alpha\in(0,2)$ and the codifference function $\tau$ need to be estimated from a realization of the process $X$. 
\end{rem}

\subsection{Two-dimensional spherical $\alpha$-cosine transform}

As mentioned in the introduction before, spherical $\alpha$-cosine transforms are of particular interest in stochastic and convex geometry. For example, when analyzing fiber processes, the so-called rose of intersections is closely related to the $\alpha$-cosine transform of the underlying directional distribution of the fibers. Also, in convex geometry, the support function of a zonoid is the spherical $\alpha$-cosine transform of some generating measure $\rho$. 

Let $f$ be an even, integrable function on $S^n$(i.e. the unit sphere in $\mathbb{R}^{n+1}$) with respect to the area surface measure on $S^n$. We write $f\in L^1_e(S^n)$.
Define the \emph{$(n+1)$-dimensional spherical $\alpha$-cosine transform} of $f$ by $K_{\alpha,S^n}:L^1_e(S^n)\rightarrow C_e(S^n)$ with
\begin{align}
\label{spherecos}
	K_{\alpha,S^n} f(\bm{\eta})=\int\limits_{S^n}\left\vert(\bm{\theta},\bm{\eta})\right\vert^\alpha f(\bm{\theta})d\bm{\theta},\quad \bm{\eta}\in S^n,
\end{align}
where $(\cdot,\cdot)$ denotes the inner product of the vectors $\bm{\theta},\bm{\eta}\in S^n$. The spherical $\alpha$-cosine transform is well-defined for $\alpha>-1$, see \cite{invformulas}.

Every point $\bm{\theta}=(\cos(x),\sin(x))$ on the unit circle $S^1$ corresponds one-to-one to an angle $x\in(-\pi,\pi]$, and any function $f\in L^1(S^1)$ can be parameterized as a $2\pi$-periodic function on $\mathbb{R}$, which is integrable over $[-\pi,\pi]$. We simply denote this by $f\in L^1\left(\left[-\pi,\pi\right]\right)$. It is $\pi$-periodic if $f$ is even on $S^1$. Furthermore, the inner product of $\bm{\theta},\bm{\eta}\in S^1$ is equivalent to the cosine of the angle between those two vectors.
Define the convolution of $2\pi$-periodic functions $u,v\in L^2([-\pi,\pi])$ by 
\begin{align*}
	(u\ast v)(t)=\int\limits_{-\pi}^\pi u(t-x)v(x)dx=\int\limits_{-\pi}^\pi u(x)v(t-x)dx,\quad t\in[-\pi,\pi].
\end{align*}

Any $2\pi$-periodic function $u\in L^2([-\pi,\pi])$ has the Fourier series expansion (in exponential form)
\begin{align*}
	u(x)=\sum\limits_{n=-\infty}^\infty\widehat{u}(n)e^{inx}
\end{align*}
with Fourier coefficients 
\begin{align*}
	\widehat{u}(n)=\frac{1}{2\pi}\int\limits_{-\pi}^\pi e^{-inx}u(x)dx.
\end{align*}
The convolution theorem of Fourier coefficients states that $\widehat{(u\ast v)}(n)=2\pi\widehat{u}(n)\widehat{v}(n)$ for all $u,v\in L^2([-\pi,\pi])$. Furthermore, the Fourier coefficient of the translation of $u$ by a constant $h\in\mathbb{R}$ is given by $\widehat{u(\cdot-h)}(n)=\widehat{u}(n)e^{-inh}$.

Then, the two-dimensional equivalent of (\ref{spherecos}) on $S^1$ is defined by 
\begin{align}
\label{spherecos2}
	K_{\alpha,S^1} f(y)=\int\limits_{-\pi}^\pi\vert\cos(y-x)\vert^\alpha f(x)dx=\left(\vert\cos(\cdot)\vert^\alpha\ast f\right)(y),\quad y\in[-\pi,\pi].
\end{align}

\begin{cor}
	\label{spherecoscor}
	Let $\alpha>-1$, $\alpha\neq0,2,4,\dots$. Let $f\in L^2([-\pi,\pi])$ be a $\pi$-periodic probability density function. Then, $f$ can be completely reconstructed from its Fourier coefficients by 
	\begin{align}
		\label{invspherecos}
		f(x)=\frac{1}{2\pi}\left(1+\sum\limits_{n\in\mathbb{Z}\setminus\{0\}}\frac{\reallywidehat{\left(K_{\alpha,S} f\right)}(2n)}{\tilde{c}_{\vert n\vert}}e^{i2nx}\right)
	\end{align}
	for all $x\in[-\pi,\pi]$, where the coefficients $\tilde{c}_k$, $k=0,1,\dots$, are given in Corollary \ref{coscorollary}.
\end{cor}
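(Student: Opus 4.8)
The plan is to exploit the convolution structure of $K_{\alpha,S^1}$ on the circle together with the convolution theorem for Fourier coefficients. By Equation (\ref{spherecos2}) we have $K_{\alpha,S^1}f = |\cos(\cdot)|^\alpha \ast f$, so the convolution theorem recorded just before the corollary gives $\widehat{(K_{\alpha,S^1}f)}(n) = 2\pi\,\widehat{|\cos(\cdot)|^\alpha}(n)\,\widehat f(n)$ for every $n\in\mathbb{Z}$. Thus the entire problem reduces to determining the Fourier coefficients of the kernel $|\cos(x)|^\alpha$ and then dividing them out.

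First I would compute $\widehat{|\cos(\cdot)|^\alpha}(n)$ from Corollary \ref{coscorollary}. Using the identity $|\cos(x/2)| = |\sin((\pi-x)/2)|$ in the cosine expansion of Theorem \ref{prop2}(i) (equivalently, reading off the expansion in the proof of Corollary \ref{coscorollary}) yields $\frac{1}{2}|\cos(x/2)|^\alpha = \frac{\tilde c_0}{2} + \sum_{j\geq 1}\tilde c_j\cos(jx)$ with $\tilde c_j = (-1)^j c_j$. Replacing $x$ by $2x$ gives $|\cos(x)|^\alpha = \tilde c_0 + 2\sum_{j\geq 1}\tilde c_j\cos(2jx)$, and passing to exponential form shows that the only nonzero coefficients sit at even indices, namely $\widehat{|\cos(\cdot)|^\alpha}(2m) = \tilde c_{|m|}$ for $m\in\mathbb{Z}$ and $\widehat{|\cos(\cdot)|^\alpha}(n) = 0$ for odd $n$.

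Next I would record two consequences. Because $f$ is $\pi$-periodic, a shift argument shows its ($2\pi$-periodic) Fourier coefficients vanish at odd indices, so $\widehat f(2m+1)=0$; this makes the odd-index identities the trivial $0=0$ and leaves only even indices to invert, which is precisely why the $\pi$-periodicity hypothesis is indispensable. For $n=2m$ the convolution identity becomes $\widehat{(K_{\alpha,S^1}f)}(2m) = 2\pi\,\tilde c_{|m|}\,\widehat f(2m)$, solvable for $\widehat f(2m)$ provided $\tilde c_{|m|}\neq 0$. This is where the hypothesis $\alpha\neq 0,2,4,\dots$ enters: from the explicit formula in Corollary \ref{coscorollary}, $\tilde c_j$ carries $\Gamma(\frac\alpha2-j+1)$ in its denominator, whose argument is a non-positive integer exactly when $\alpha$ is a non-negative even integer; since $\alpha>-1$ and $\alpha\neq 0,2,4,\dots$, every $\tilde c_j$ is finite and nonzero. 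Hence $\widehat f(2m) = \widehat{(K_{\alpha,S^1}f)}(2m)/(2\pi\tilde c_{|m|})$.

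Finally I would reassemble $f$ from its $L^2$-convergent Fourier series $f(x)=\sum_{m\in\mathbb{Z}}\widehat f(2m)e^{i2mx}$, the odd terms being absent. The normalization $\int_{-\pi}^\pi f = 1$ of the probability density fixes the mean term $\widehat f(0)=1/(2\pi)$, and substituting the expression for $\widehat f(2m)$, $m\neq 0$, and factoring out $1/(2\pi)$ produces exactly Equation (\ref{invspherecos}). I expect the main obstacle to be bookkeeping rather than conceptual: keeping the cosine-series coefficients $\tilde c_j$ correctly aligned with the exponential Fourier coefficients through the two rescalings (the substitution $x/2\mapsto x$ and the index doubling), and dispatching the nonvanishing of $\tilde c_j$ cleanly via the location of the poles of the Gamma function.
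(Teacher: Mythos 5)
Your proposal is correct and follows essentially the same route as the paper's proof: the convolution theorem on the circle, identification of the kernel coefficients $\reallywidehat{\vert\cos(\cdot)\vert^\alpha}(2m)=\tilde{c}_{\vert m\vert}$ (vanishing at odd indices), the vanishing of $\widehat{f}$ at odd indices via $\pi$-periodicity, and the normalization $\widehat{f}(0)=1/(2\pi)$ from the probability-density assumption. The only difference is that you make explicit why $\tilde{c}_j\neq 0$ for $\alpha\neq 0,2,4,\dots$ (via the poles of $\Gamma\left(\frac{\alpha}{2}-j+1\right)$), a point the paper leaves implicit in the hypothesis and only addresses in the remark following the corollary.
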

\begin{proof}
	
	Since $\left\{\tilde{c}_n\right\}$ are the coefficients of the Fourier series (in sine-cosine form) of the function $\frac{1}{2}\left\vert\cos\left(\frac{x}{2}\right)\right\vert^\alpha$, one can easily verify that $\reallywidehat{\vert\cos(\cdot)\vert^\alpha}(n)=\tilde{c}_{\vert n/2\vert}$ for even $n\in\mathbb{Z}$ and $0$ otherwise. Thus, by the convolution theorem and Equation (\ref{spherecos2}) we get the Fourier coefficients $\reallywidehat{K_{\alpha,S^1}}(n)=2\pi c_{\vert n/2\vert }\widehat{f}(n)$ for even $n\in\mathbb{Z}$ and $0$ otherwise, as well as the series expansion
	$$
		K_{\alpha,S^1}f(y)=2\pi\sum\limits_{n=-\infty}^\infty \tilde{c}_{\vert n\vert}\widehat{f}(2n)e^{i2ny}.
	$$
	
	Note that $\widehat{f}(0)=1/(2\pi)$, and for any even $n\in\mathbb{N}$ we can compute $\widehat{f}(n)=\reallywidehat{K_{\alpha,S^1}f}(n)/(2\pi c_{n/2})$. Moreover, since $f$ is $\pi$-periodic, there exists a shift $h\in[0,\pi]$ such that $f^\ast(x)=f(x+h)$ is an even, $\pi$-periodic function. It holds that 
	\begin{align*}
		\widehat{f^\ast}(n)=\frac{1}{2\pi}\int\limits_{-\pi}^\pi e^{-inx}f^\ast(x)dx=\frac{1}{\pi}\int\limits_0^\pi\cos(nx)f^\ast(x)dx=0
	\end{align*}
	since $\cos(n\cdot)$ is odd about $\pi/2$ on $[0,\pi]$ for all odd $n\in\mathbb{N}$. By the shift property we get $\widehat{f}(n)=\widehat{f^\ast}(n)e^{-inh}=0$ for $n$ odd. 
	
	To summarize the Fourier coefficients of $f$ are given by 
	\begin{align}
		\label{ffouriercoeff}
		\widehat{f}(n)
		=\begin{cases}
			1/(2\pi),&n=0,\\
			\reallywidehat{K_{\alpha,S^1}f}(n)/(2\pi\tilde{c}_{n/2}),&n=2k,~k\in\mathbb{N},\\
			0&n=2k+1,~k\in\mathbb{N}.
		\end{cases}
	\end{align}
	Lastly, note that $\widehat{f}(-n)=\overline{\widehat{f}(n)}$, where $\overline{z}$ denotes the complex conjugate for $z\in\mathbb{C}$. Applying this to the Fourier series representation $f(x)=\sum_{n=-\infty}^\infty \hat{f}(n)e^{inx}$ yields the desired result. 
\end{proof}

\begin{rem}
	In the case $\alpha=0,2,4,\dots$, the coefficients $\tilde{c}_j=0$ for all $j\geq\alpha/2$ (see Corollary \ref{coscorollary}). 
	Thus, only a finite number of Fourier coefficients of $f$ can be computed from the Fourier coefficients $\reallywidehat{K_{\alpha,S} f}(n)$, making it impossible to fully reconstruct $f$.
\end{rem}

\section{Numerical results}
\label{num}

We first consider the $\alpha$-sine transform on $\mathbb{R}_+$. The inversion results of the direct approach are presented in Section \ref{numDA} for the case $\alpha>1$. Additionally, various numerical difficulties that emerge when dealing with the direct approach are analyzed. 
The Fourier approximation approach is considered in Section \ref{numFAA}. It proves to be more accurate as well as more efficient compared to the direct approach. Most importantly, it is applicable for all $\alpha>-1$. Inversion results are first shown for $\alpha\geq 0$, the case $-1<\alpha<0$ is considered separately. Moreover, in the context of an application to harmonizable $S\alpha S$ processes, Gaussian noise is added to the transform $T_\alpha f$ to test the smoothing procedure of Section \ref{mollifier}.
Lastly, Section \ref{numspherecos} deals with the two dimensional $\alpha$-cosine transform. We consider a number of $\pi$-periodic probability density functions on $[-\pi,\pi]$ and demonstrate inversion formula (\ref{invspherecos}) developed in Corollary \ref{spherecoscor} for $\alpha>-1$.

Consider the following functions in $L^1_{w,\alpha}(\mathbb{R}_+)\cap L^2(\mathbb{R}_+)$ as well as their $2$-sine transform $T_2f$:
\begin{ex} 
	\label{ex}
	\begin{enumerate}[(a)]
		\item $
			f_1(x)=e^{-x^2},\quad T_2f_1(y)=\frac{\sqrt{\pi}}{4}\left(1-e^{-y^2}\right).
		$
		\item $
			f_2(x)=x^2e^{-\vert x\vert},\quad T_2f_2(y)=\frac{8y^2\left(3+6y^2+8y^4\right)}{\left(1+4y^2\right)^3}.
		$
		\item $
			f_3(x)=\frac{1}{(1+x^2)^2},\quad T_2f_3(y)=\frac{\pi}{8}\left(1-e^{-2\vert y\vert}(1+2\vert y\vert)\right).
		$
	\end{enumerate}

\end{ex}
For $\alpha\neq 0,2,4,\dots$, the transform $T_\alpha f$ is in general not explicitly given, though its graph remains fairly similar, see Figure \ref{fig:Tf_plots}.
\begin{figure}[h]
	\centering
	\begin{subfigure}{0.32\textwidth}
		\includegraphics[width=\textwidth]{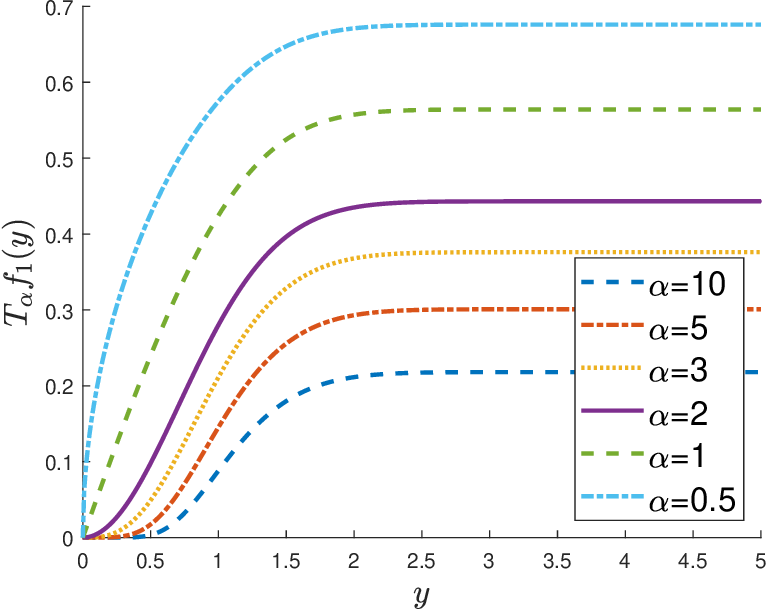}
		\caption{}
	\end{subfigure}
	\begin{subfigure}{0.32\textwidth}
		\includegraphics[width=\textwidth]{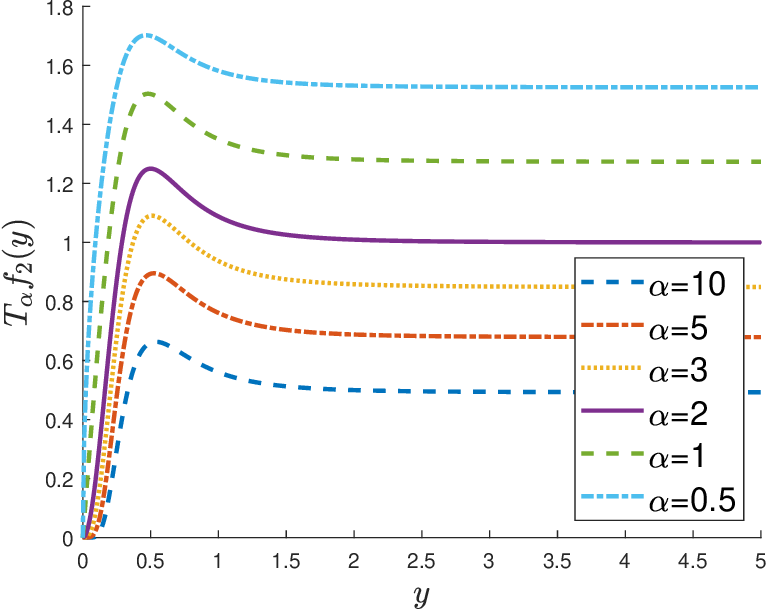}
		\caption{}
	\end{subfigure}
	\begin{subfigure}{0.32\textwidth}
		\includegraphics[width=\textwidth]{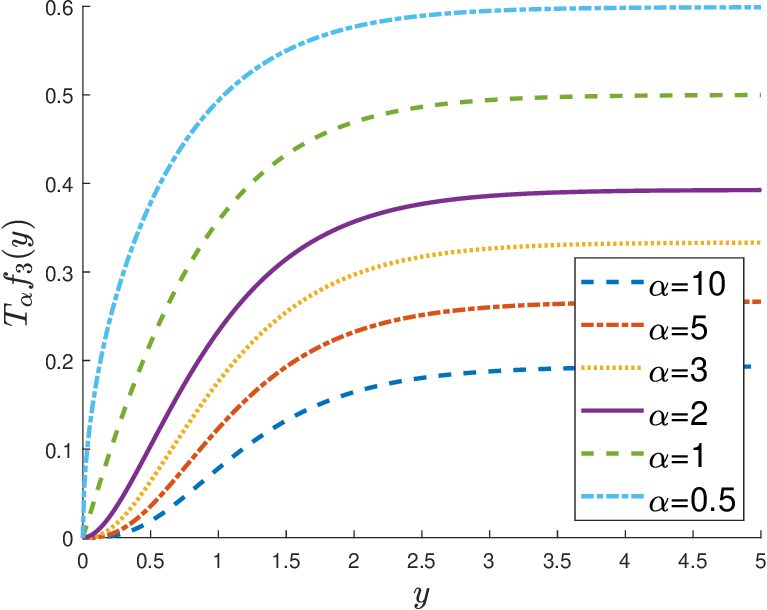}
		\caption{}
	\end{subfigure}
	\caption{\textbf{$\alpha$-sine transform on $\mathbb{R}$.} Plots of the function $Tf$ for all three examples $f_1, f_2$ and $f_3$ with $\alpha\in\{0.5,1,2,3,5,10\}$.}
	\label{fig:Tf_plots}
\end{figure}

\subsection{Direct approach}
\label{numDA}

In application, one has to keep the following two issues with the direct approach in mind. The operator $\mathcal{G}^{-1}$ defined in Corollary \ref{final_cor} only fulfills injectivity, but it is not surjective. Secondly, $\mathcal{G}^{-1}$ is in general not a continuous operator. Therefore a straightforward application of $\mathcal{G}^{-1}$ does not automatically yield $f$.
The reason for the above-mentioned arises from the fact that $\inf_{x>0}\mu(x)=0$, where $\mu$ was defined 
in Equation (\ref{mu+2}). Hence, the function $1/\mu$ is not bounded and multiplication by this function does not define a linear bounded operator on $L^2\left(\mathbb{R}_+,\frac{dx}{x}\right)$. To deal with this we approximate $1/\mu$ by
\begin{align*}
\frac{1}{\mu}\mathbbm{1}_{\{\vert\mu\vert>\varepsilon\}}
\end{align*}
for some small $\varepsilon>0$, and estimate the function $f$ by 
\begin{align}
\label{finvDA}
\tilde{f}(x)=\left(\mathcal{M}^{-1}\mathcal{F}_+^{-1}\frac{1}{\mu}\mathbbm{1}_{\{\vert\mu\vert>\varepsilon\}}\mathcal{F_+M}\left[z^{-1}g\left(z^{-1}\right)\right]\right)(x)~.
\end{align}
To simplify the numerical implementation, note that for any function $u\in L^2\left(\mathbb{R}_+,x^cdx\right)$
\begin{align*}
\mathcal{F_+M}u(x)=\int\limits_{\mathbb{R}_+}\mathcal{M}u(y)e^{-i\log(x)\log(y)}\frac{dy}{y}=\int\limits_{\mathbb{R}_+}y^{\frac{c+1}{2}}u(y)e^{-i\log(x)\log(y)}\frac{dy}{y}=\int\limits_{\mathbb{R}_+}y^{\frac{c-1}{2}-i\log(x)}u(y)dy~.
\end{align*}
Thus, we define $\mathcal{H}:L^2\left(\mathbb{R}_+,x^cdx\right)\rightarrow L^2\left(\mathbb{R}_+,\frac{dx}{x}\right)$ by
\begin{align*}
\mathcal{H}g(x)=\mathcal{F_+M}\left[y^{-1}g\left(y^{-1}\right)\right](x)=\int\limits_{\mathbb{R}_+}y^{\frac{c-1}{2}-i\log(x)}y^{-1}g\left(y^{-1}\right)dy=\int\limits_{\mathbb{R}_+}y^{\frac{c-3}{2}-i\log(x)}g\left(y^{-1}\right)dy~.
\end{align*}
Similarly, we introduce $\mathcal{H}_2: L^2\left(\mathbb{R}_+,x^cdx\right)\rightarrow,  L^2\left(\mathbb{R}_+,\frac{dx}{x}\right)$ by
\begin{align*}
\mathcal{H}_2w(z)=\mathcal{M}^{-1}\mathcal{F_+}^{-1}w(z)=z^{-\frac{c+1}{2}}\mathcal{F_+}^{-1}w(z)=z^{-\frac{c+1}{2}}\frac{1}{2\pi}\int\limits_{\mathbb{R}_+}w(x)e^{i\log(x)\log(z)}\frac{dx}{x}=\frac{z^{-\frac{c+1}{2}}}{2\pi}\int\limits_{\mathbb{R}_+}w(x)x^{i\log(z)-1}dx~.
\end{align*}
Then $\tilde{f}$ is computed by 
\begin{align}
\label{fdirectapproach}
\tilde{f}(x)=\left(\mathcal{H}_2\frac{1}{\mu}\mathbbm{1}_{\{\vert\mu\vert>\varepsilon\}}\mathcal{H}g\right)(x)~.
\end{align}
Recall that the constant $c=1+\delta$, where $\delta>0$ such that $\alpha\geq1+\delta/2$, thus 
$1<c\leq2\alpha-1$. Furthermore, the constant $c$ directly affects the decay of the integrand in the definition of $\mu$, i.e. the larger $c$ the faster its decay. We therefore set $c$ as large as possible, that is $c=2\alpha-1$, in the case $0<\alpha<2$. In the case $\alpha>2$, values of $c>3$, and thus the exponent $(c-3)/2>0$ in the operator $\mathcal{H}$ above lead to difficulties during numerical integration.
It proves to be more convenient to set $c=3\in(1,2\alpha-1]$ here.

The functions $f_1$ and $f_2$ of Example \ref{ex} are contained in the space $L^2(\mathbb{R}_+,x^cdx)$ for all $\alpha>1$, hence the inversion formula from Corollary \ref{final_cor} is applicable for all $\alpha>1$. On the other hand, function $f_3$ fulfills this condition only for $\alpha\in(1,2]$.
The results for all three example functions in the case $\alpha=2$ are displayed in Figure \ref{fig:fa2DA}. The inversion was performed with $\varepsilon=0.025$.
\begin{figure}[h]
	\centering
	\begin{subfigure}{0.32\textwidth}
		\includegraphics[width=\textwidth]{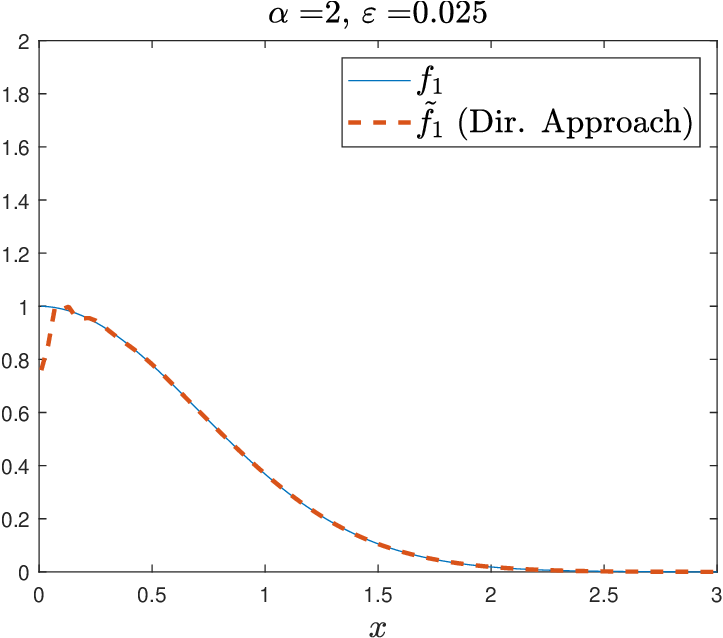}
		\caption{}
	\end{subfigure}
	\begin{subfigure}{0.32\textwidth}
		\includegraphics[width=\textwidth]{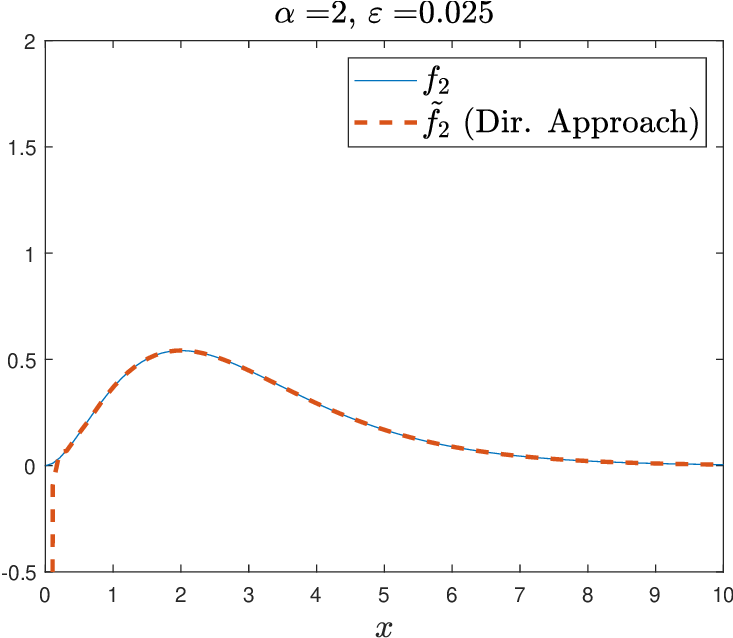}
		\caption{}
	\end{subfigure}
	\begin{subfigure}{0.32\textwidth}
		\includegraphics[width=\textwidth]{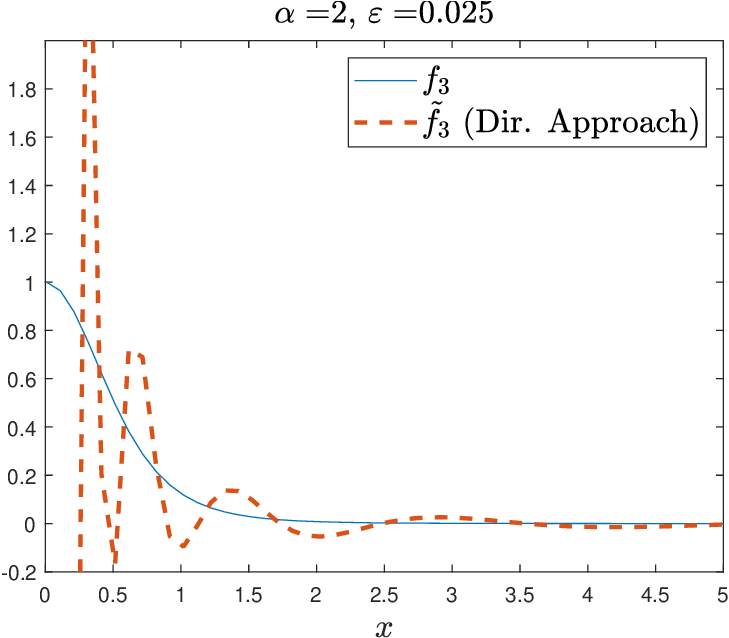}
		\caption{}
	\end{subfigure}
	\caption{\textbf{Direct approach, $\alpha=2$}. Inversion performed with $\varepsilon=0.025$. Solid blue line shows the actual functions $f_1,f_2,f_3$. The dashed red line shows the result of the inversion of the direct approach.}
	\label{fig:fa2DA}
\end{figure}

The choice of $\varepsilon$ is crucial for the precision of the inversion. The closer $\varepsilon$ is to $0$, the better the results of the inversion, but computation takes a significantly larger amount of time. 
Allowing for a larger $\varepsilon$ reduces computation time, but large deviations from the expected result can be seen in Figure \ref{fig:fa2DAbad}, where the inversion was performed for all three example functions with $\varepsilon=0.1$. Moreover, the inversion for the functions $f_1$ and $f_2$ is significantly faster than in the case of $f_3$. 

\begin{figure}[h]
	\centering
	\begin{subfigure}{0.32\textwidth}
		\includegraphics[width=\textwidth]{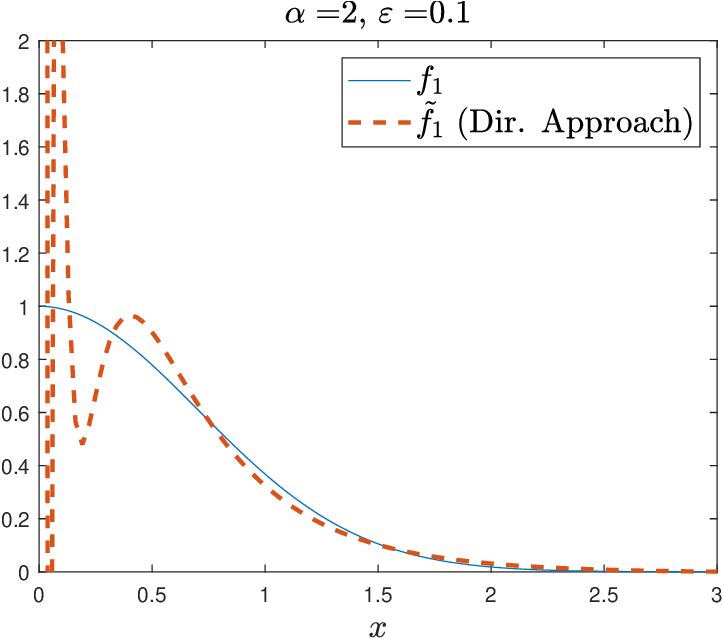}
		\caption{}
	\end{subfigure}
	\begin{subfigure}{0.32\textwidth}
		\includegraphics[width=\textwidth]{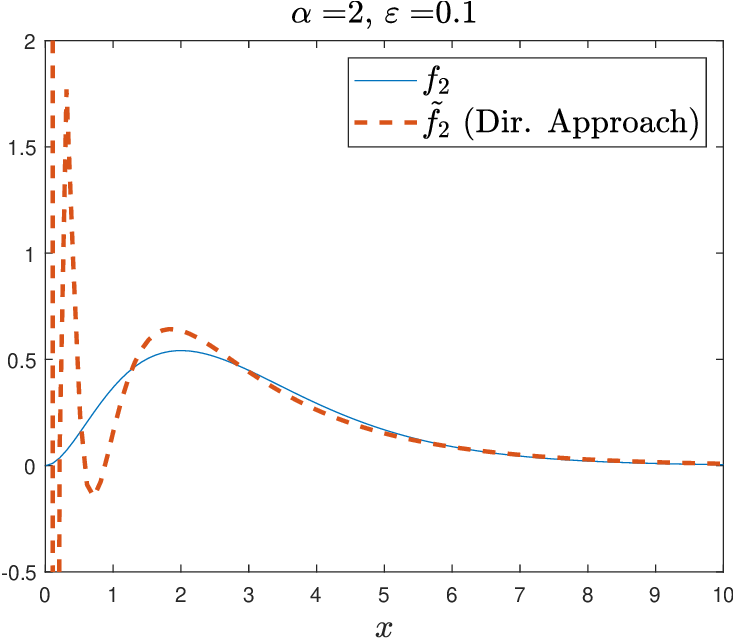}
		\caption{}
	\end{subfigure}
	\begin{subfigure}{0.32\textwidth}
		\includegraphics[width=\textwidth]{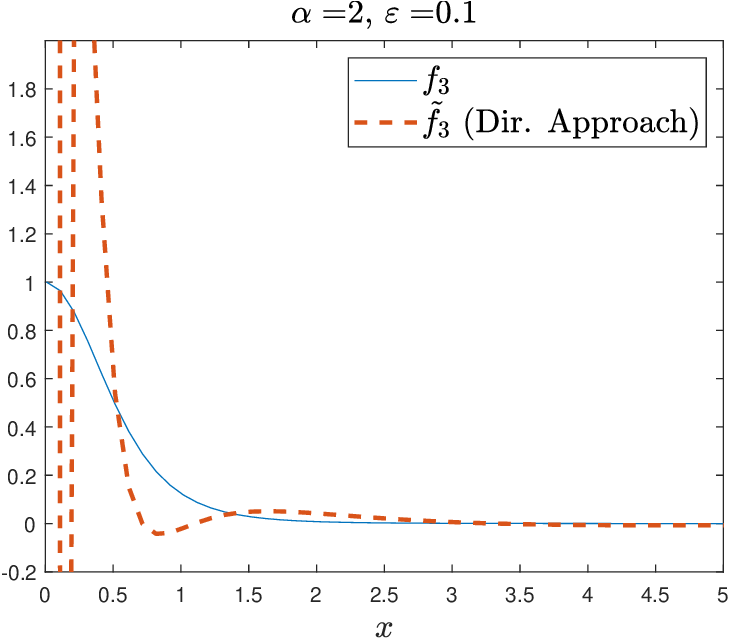}
		\caption{}
	\end{subfigure}
	\caption{\textbf{Direct approach, $\alpha=2$}. Inversion performed with $\varepsilon=0.1$ for all three examples. Solid blue line shows the actual functions $f_1,f_2,f_3$. The dashed red line shows the result of the inversion of the direct approach.}
	\label{fig:fa2DAbad}
\end{figure}

In the following, only example $f_2$ is considered. The inversion is computed for the cases $\alpha\in\{10,3,1.5\}$. The results are illustrated in Figure \ref{fig:f2DA}. Numerical computation takes a significantly larger amount of time for smaller $\alpha$, but at the same time a larger choice $\varepsilon$ suffices.
\begin{figure}
	\centering
	\begin{subfigure}{0.32\textwidth}
		\includegraphics[width=\textwidth]{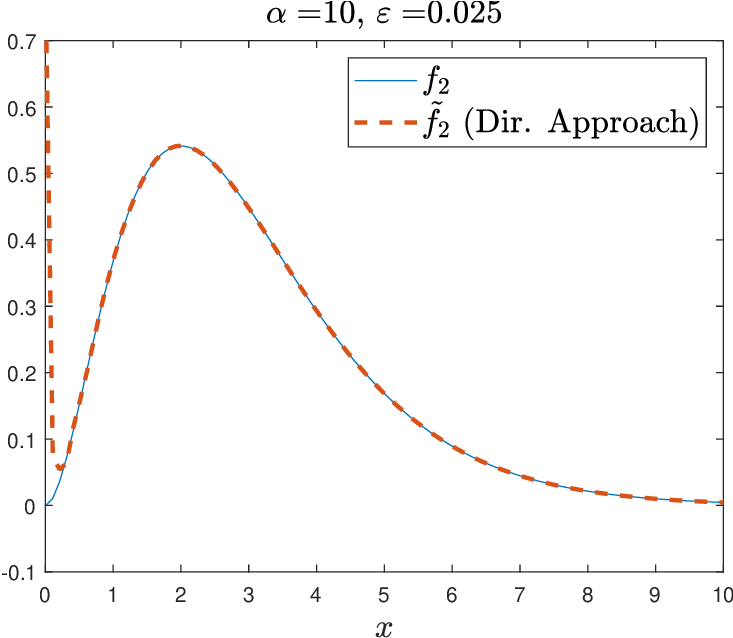}
	\end{subfigure}
	\begin{subfigure}{0.32\textwidth}
		\includegraphics[width=\textwidth]{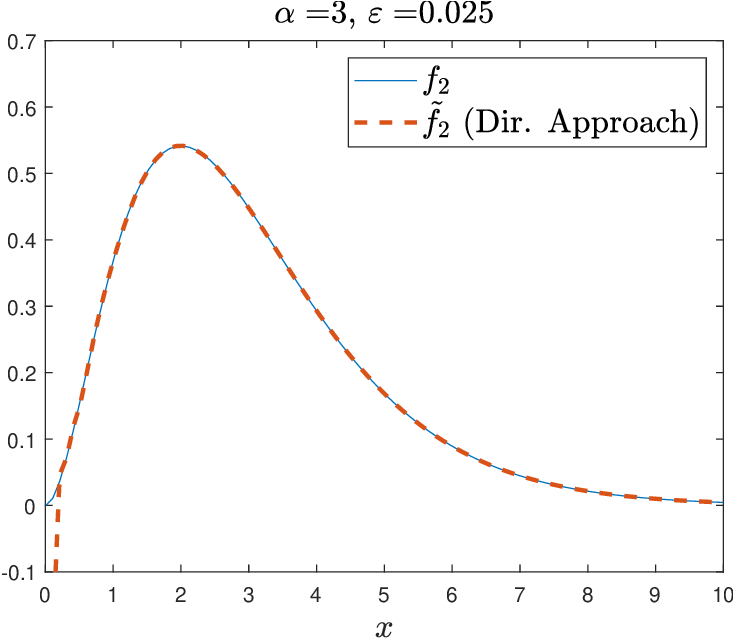}
	\end{subfigure}
	\begin{subfigure}{0.32\textwidth}
		\includegraphics[width=\textwidth]{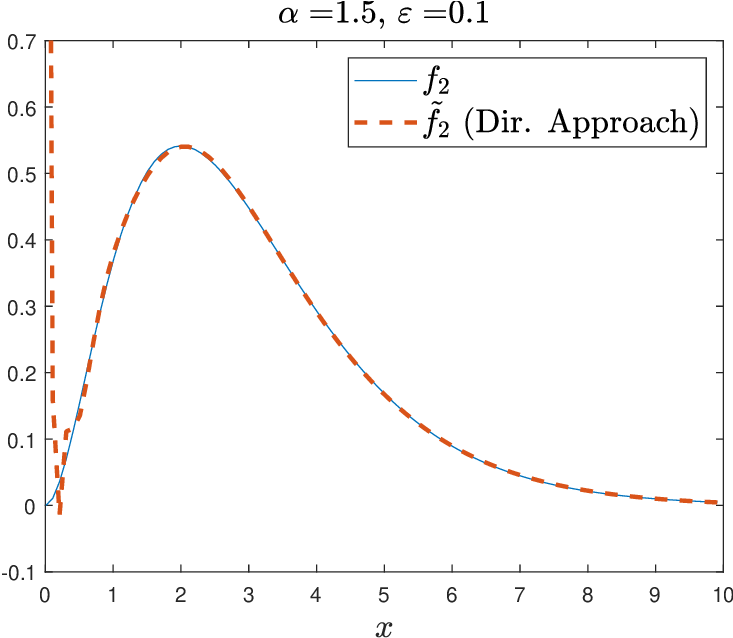}
	\end{subfigure}
	\caption{\textbf{Direct approach, $\alpha\in\{10,3, 1.5\}$}. Inversion performed for $f_2$ . Solid blue line shows the actual function $f_2$. The dashed red line shows the result of the inversion of the direct approach.}
	\label{fig:f2DA}
\end{figure}

The inversion hinges on the behavior of the function $\mu$ which directly depends on $\alpha$. Figure \ref{fig:muplots} depicts the function $\vert\mu\vert$ on the left-hand side as well as the real part of  $1/\mu(x)\mathbbm{1}_{\{\vert\mu(x)\vert>0.1\}}$ on the right-hand side for various values of $\alpha$ with $c=2\alpha-1$ (or $c=3$ for $\alpha\geq2$). The larger $\alpha$ the faster and less fluctuating the decay of $\vert\mu\vert$. Thus, the function $1/\mu(x)\mathbbm{1}_{\{\vert\mu(x)\vert>0.1\}}$ cuts off sooner to 0 which directly contributes to computation of the integral operator $\mathcal{F}_+^{-1}$ in Equation (\ref{finvDA}). Additionally, Figure \ref{fig:muplots} emphasizes how choosing $\varepsilon$ too small when dealing with smaller $\alpha$ is counterproductive, for example consider the case $\alpha=1.25$. The corresponding dotted line on the left-hand side of Figure \ref{fig:muplots} is fluctuating, and $\vert\mu\vert$ fails to fall below $\varepsilon=0.1$. Thus, $1/\mu(x)\mathbbm{1}_{\{\vert\mu(x)\vert>0.1\}}$ on the right-hand side of the plot is also fluctuating a lot and does not cut off to 0 as in the other cases. This will lead to difficulties during numerical integration and prolonged computation times. 

\begin{figure}
	\centering
	\begin{subfigure}{0.32\textwidth}
		\includegraphics[width=\textwidth]{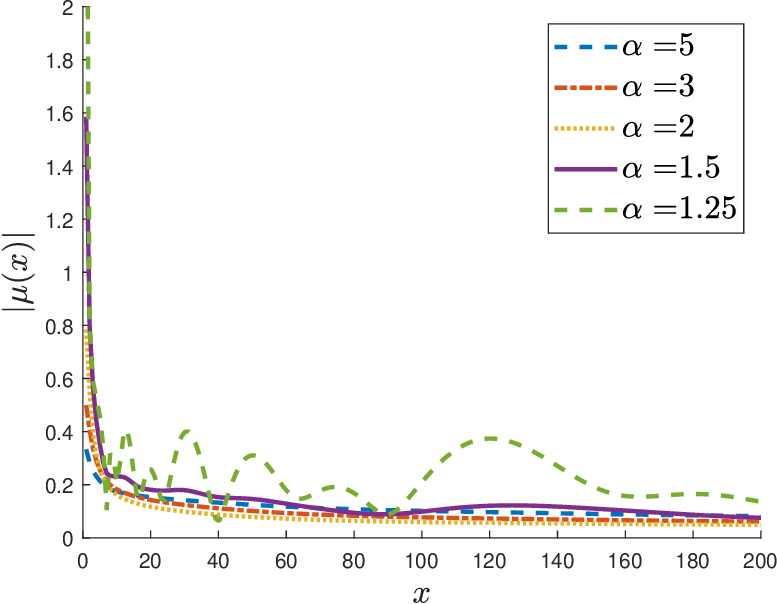}
	\end{subfigure}
	\hspace{3ex}
	\begin{subfigure}{0.32\textwidth}
		\includegraphics[width=\textwidth]{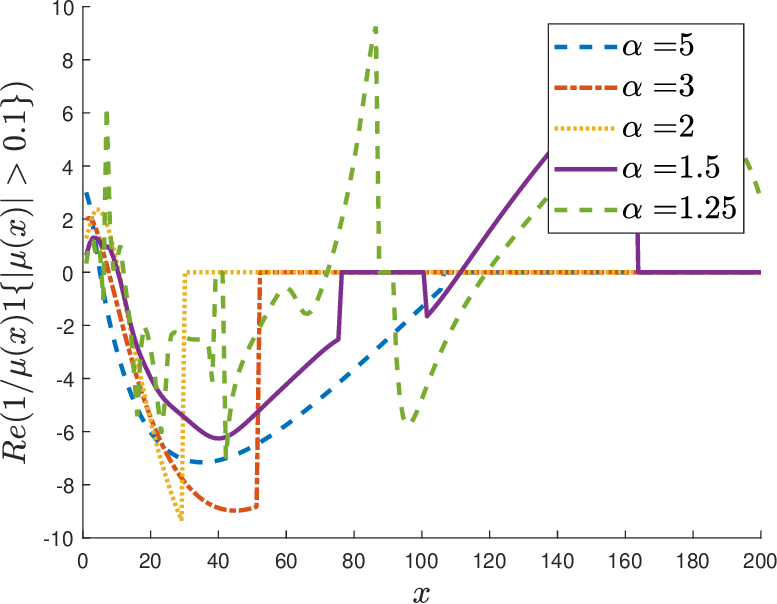}
	\end{subfigure}
	\caption{
		The functions $\vert\mu(x)\vert$ (left) and $1/\mu(x)\mathbbm{1}_{\{\vert\mu(x)\vert>0.1\}}$ (right) for different values of $\alpha$ with $c=2\alpha-1$ ($c=3$ for $\alpha\geq 2$).}
	\label{fig:muplots}
\end{figure}

On another note, for $\alpha\neq 2$ the function $g=T_\alpha f$ is not given in a closed formula which makes applications of integral transforms to it numerically challenging, as the inversion formula (\ref{finvDA}) essentially involves threefold integration, i.e. by $T_\alpha $ itself and by $\mathcal{F}_+, \mathcal{F}_+^{-1}$, respectively. Therefore, the function $g$ was sampled discretely on the interval $[0,20]$ with a step size of $10^{-6}$ between each sample point. Linear interpolation and constant extrapolation with the last sample point was used as an estimate for $g$. The upper bound of the interval was chosen such that $g$ is approximately constant beyond that point.

\subsection{Fourier approximation approach}
\label{numFAA}

We consider the functions given in Example \ref{ex}. 
Function values of the Fourier transform of the wanted function $f$ are the solution of the system of linear equations $\bm{\eta}=\bm{C\cdot\xi}$ described in Section \ref{Fourierapproximation}. Recall, the vectors $\bm{\xi}\in\mathrm{R}^N$ containing function values of the Fourier transform,
$
\xi_n=\hat{f}_R\left(n\frac{R}{N}\right),
$
and $\bm{\eta}\in\mathrm{R}^N$ given by 
$
\eta_n=T_\alpha f\left(n\frac{R}{2N}\right)-\frac{c_0}{2}\mathcal{F}f(0)
$
for $n=1,\dots,N$, as well as the matrix $\bm{C}$ defined in Proposition \ref{propmatrix}. 

The integer $R$ should be chosen as large as possible. Under the assumption that the Fourier transform $\mathcal{F}f$ is negligibly outside of the interval $[-R,R]$, we choose $R$ large enough such that 
$T_\alpha f(y)$ is approximately constant for all $y>R$. 
Furthermore, note that $T_\alpha f(y)\approx\frac{c_0}{2}\mathcal{F}f(0)$ for large $y>R$. Hence, we chose $\mathcal{F}f(0)\approx2T_\alpha f(y)/c_0$ for some large $y>R$. 
Lastly, the larger $N$ the better in general, as this will make for finer sampling. 

Figure \ref{fig:Ffsyssol} shows the solution $\bm{\xi}$ of the system of linear equations for all three examples and the case $\alpha=1.5$ with $N=100$ and $R=10$.
\begin{figure}[h]
	\centering
	\begin{subfigure}{0.32\textwidth}
		\includegraphics[width=\textwidth]{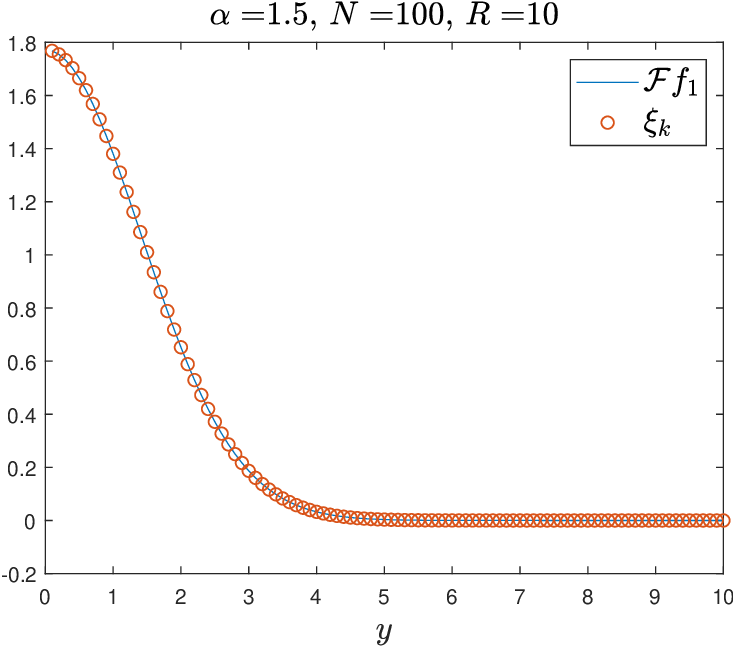}
		\caption{}
	\end{subfigure}
	\begin{subfigure}{0.32\textwidth}
		\includegraphics[width=\textwidth]{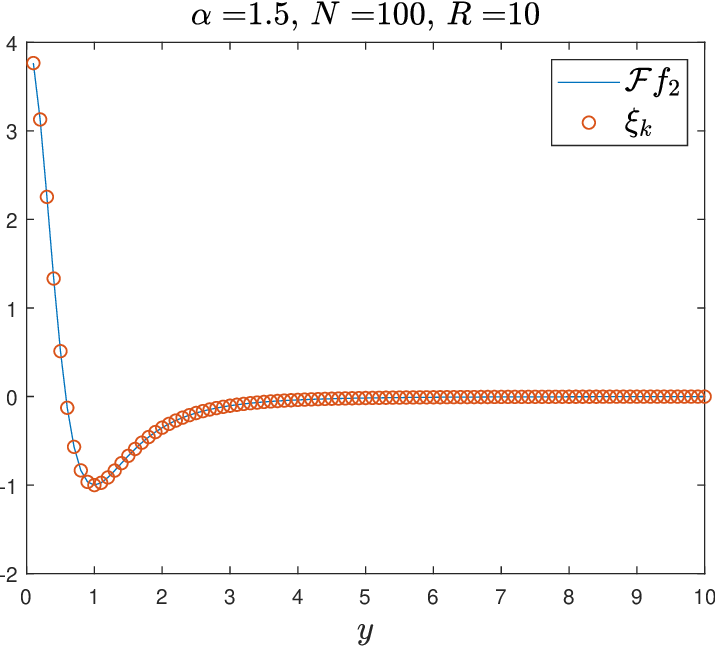}
		\caption{}
	\end{subfigure}
	\begin{subfigure}{0.32\textwidth}
		\includegraphics[width=\textwidth]{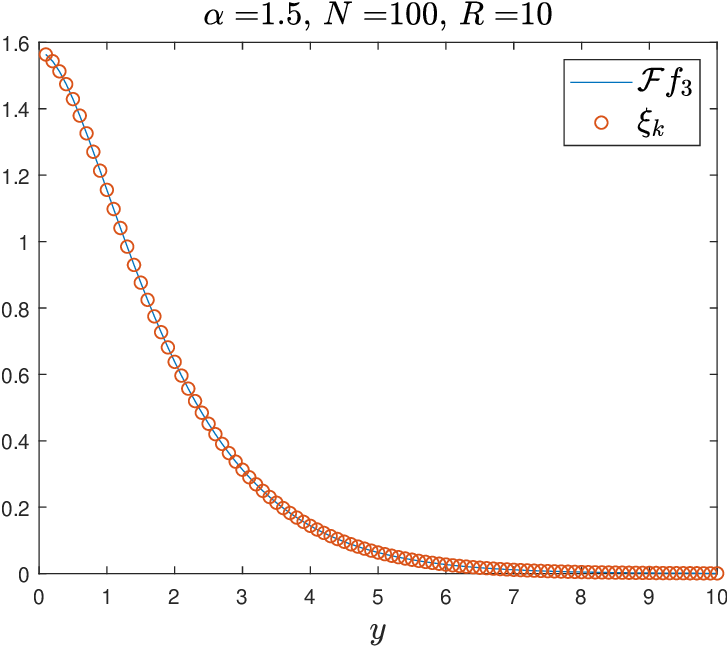}
		\caption{}
	\end{subfigure}
	\caption{\textbf{System of linear equations, $\alpha=1.5, N=100, R=10$.} Solid blue line shows the Fourier transform of the examples $f_1, f_2$ and $f_3$.  The red circles are the solution of the system of linear equations $\bm{\xi}$.}
	\label{fig:Ffsyssol}
\end{figure}
\begin{figure}[h]
	\centering
	\begin{subfigure}{0.32\textwidth}
		\includegraphics[width=\textwidth]{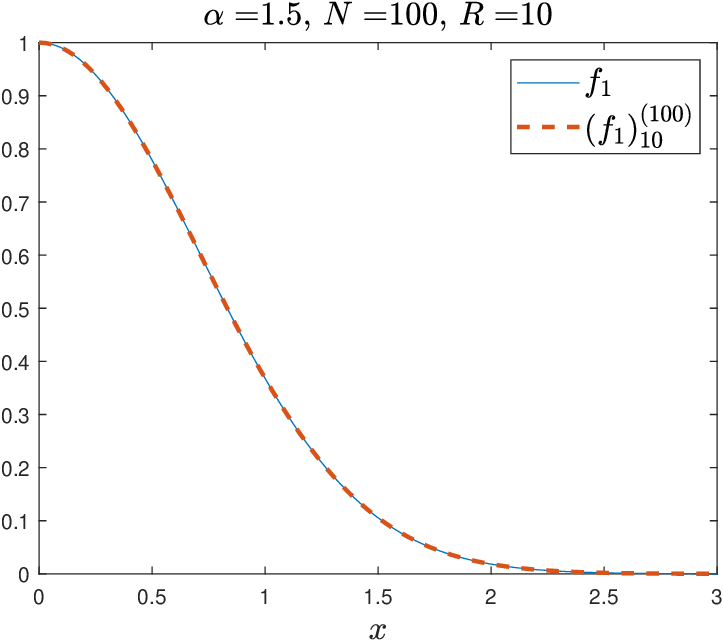}
		\caption{}
	\end{subfigure}
	\begin{subfigure}{0.32\textwidth}
		\includegraphics[width=\textwidth]{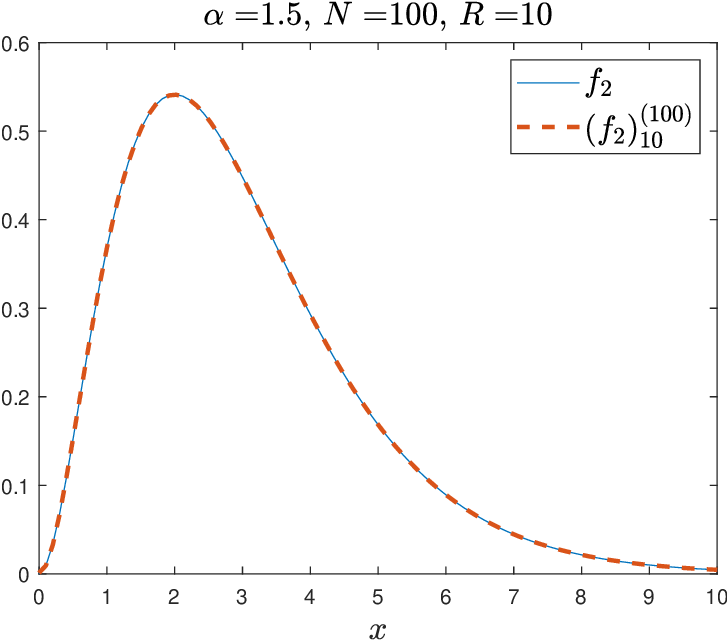}
		\caption{}
	\end{subfigure}
	\begin{subfigure}{0.32\textwidth}
		\includegraphics[width=\textwidth]{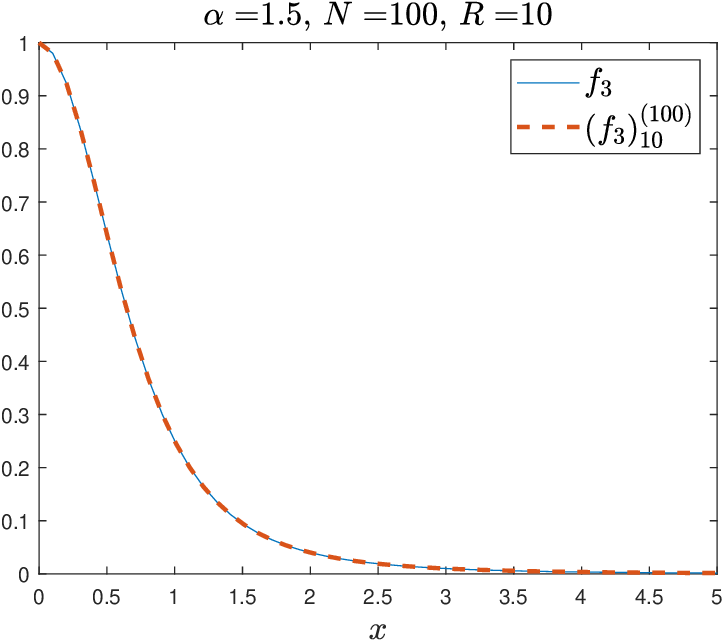}
		\caption{}
	\end{subfigure}
	\caption{\textbf{Band-limited interpolation, Inversion, $\alpha=1.5, N=100, R=10$.} Solid blue line shows the functions $f_1, f_2$ and $f_3$.  The dashed red line shows the inversion results.}
	\label{fig:fa15BLI}
\end{figure}

On the other hand, when fixing $\alpha$ and $N$, choosing $R$ too small is directly reflected in the results for $\bm{\xi}$, which is to be expected.
For example choosing $R=1$ would imply the assumption that $\mathcal{F}f$ vanishes outside of the interval $[-1,1]$, which cannot be true as the function $T_\alpha f(y)$ is still increasing for $y>1$ in all three examples $f_1,f_2,f_3$, see Figure \ref{fig:Tf_plots} (a).  

Contrary to that, larger $R$ also show good results for $\bm{\xi}$. But since the number of sample points is fixed with $N$, the larger $R$ the coarser the grid on which the Fourier transform $\mathcal{F}f$ is determined, which will effect the precision of the interpolation to follow. 

The sampling theory of Shannon as discussed before gives another useful interpolation method for the Fourier transform of the spectral density. By symmetry of the Fourier transform and choosing $\hat{f}(0)=\mathcal{F}f(0)\approx2T_\alpha f(y)/c_0$ for some large $y>R$, the function values $\hat{f}_R\left(kR/N\right)$, $k=-N,\dots,N$ are approximated by the solution of the system of equations $\bm{\xi}$. Using the truncated cardinal series defined in (\ref{fhattrunccardinal}) we interpolate $\hat{f}_R$. 

By comparison of $\hat{f}(n/(2B))=\hat{f}(nR/N)$, the Nyquist rate $2B$ corresponds to the ratio $N/R$. Since we generally cannot assume $f$ to be compactly supported, the convergence results previously stated, suggest that the ratio $N/R$ should tend to infinity. In practice, we therefore choose $R$ just like in the case of the linear interpolation large enough such that $T_\alpha f(y)$ is approximately constant for $y>R$. The integer $N$ is then set as large as possible. 
We compute the estimate for the spectral density $f$ by
\begin{align*}
f_R^{(N)}(x)=
\frac{R}{2\pi N}rect\left(\frac{xR}{2\pi N}\right)\sum\limits_{n=-N}^N\hat{f}\left(n\frac{R}{N}\right)e^{-ixnR/N},
\end{align*}
see Theorem \ref{lastcor}.
Figure \ref{fig:fa15BLI} shows the result of the inversion with $N=100$, $R=10$ (based on the results depicted in Figure \ref{fig:Ffsyssol}) for all three examples in the case $\alpha=1.5$.

We performed the inversion with various values of $\alpha>0$ for all three functions $f_1,f_2$ and $f_3$. 
With a suitable choice of $N, R$, different values of $\alpha$ seem to have no effect on the computation of $\bm{\xi}$. 
Figure \ref{fig:Ffdiffalpha} highlights the result for $f_1$ with $N=100, R=10$ and $\alpha=10$. 

 \begin{figure}
 	\centering
 	\begin{subfigure}{0.32\textwidth}
 		\includegraphics[width=\textwidth]{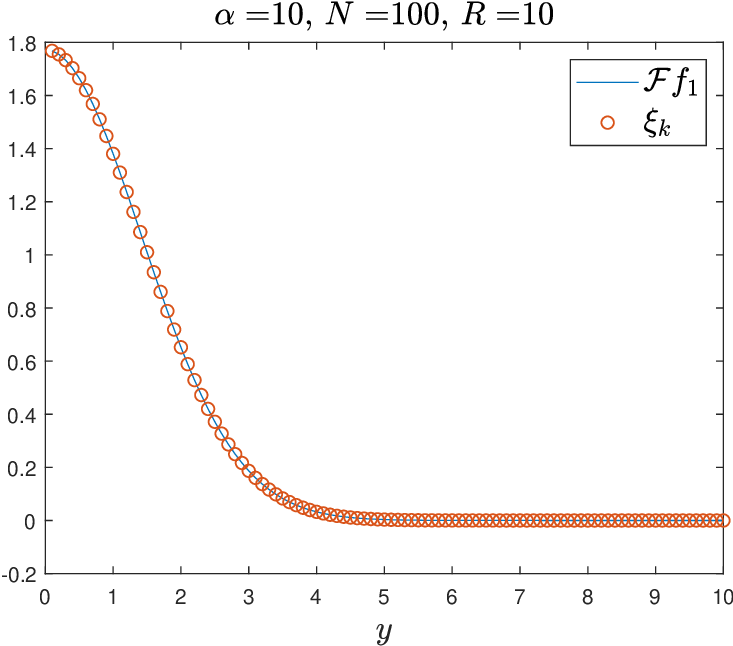}
 	\end{subfigure}
 	\begin{subfigure}{0.32\textwidth}
 		\includegraphics[width=\textwidth]{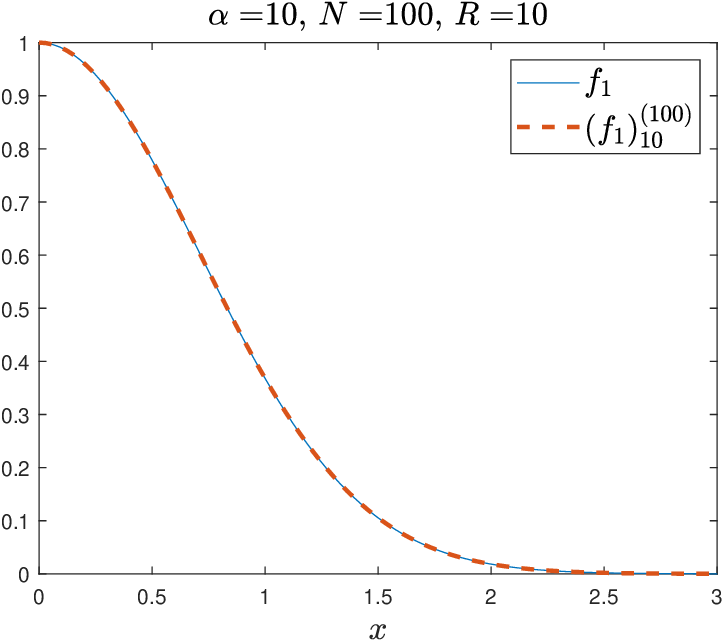}
 	\end{subfigure}
 	\caption{\textbf{Band-limited interpolation, Inversion, $\alpha=10, N=100, R=10$.} Approximation of the Fourier transform $\mathcal{F}f_1$ (left), and the inversion results (right).}
 	\label{fig:Ffdiffalpha}
 \end{figure}

\begin{rem}
	In Remark \ref{linintremark} the possibility of linear interpolation was mentioned. Our numerical experiments show that there seemed to be no visible difference in the results of either interpolation method. Linear interpolations performs slightly faster but is on the other hand also marginally less accurate. See Table \ref{tab:comp} for a comparison between all methods for example $f_2$.
\end{rem}

\subsubsection{Smoothing}
\label{noisy}
Section \ref{mollifier} touched upon the topic of noise contamination of the function $T_\alpha f$. To simulate this scenario, we artificially added noise to the function $T_\alpha f$. 
For all three functions $f_1,f_2$ and $f_3$, their transform $T_\alpha f$ is sampled on the interval $[0,20]$ at $400$ equidistant points. We then added Gaussian noise with a standard deviation of $0.1$ to each sample point, see Figure \ref{fig:Tfnoise}.
\begin{figure}[h]
	\centering
	\begin{subfigure}{0.32\textwidth}
		\includegraphics[width=\textwidth]{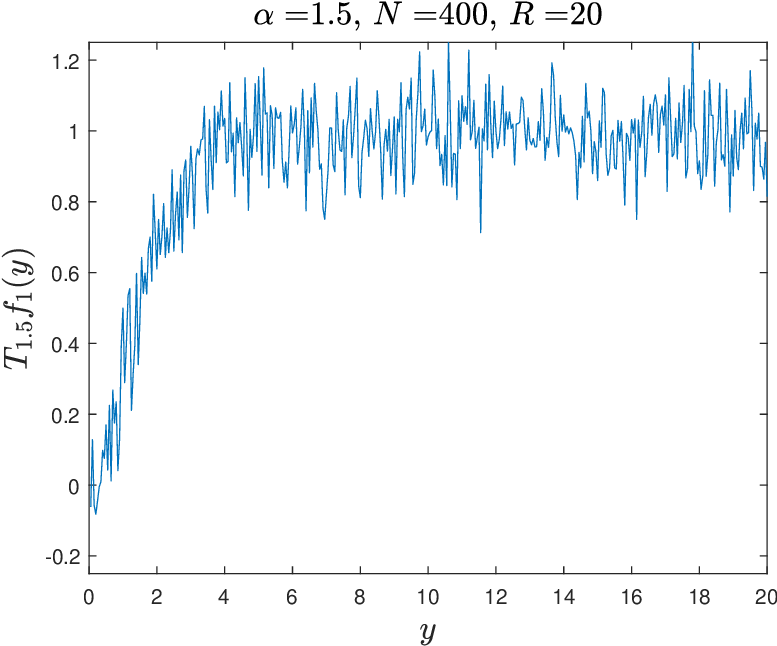}
		\caption{}
	\end{subfigure}
	\begin{subfigure}{0.32\textwidth}
		\includegraphics[width=\textwidth]{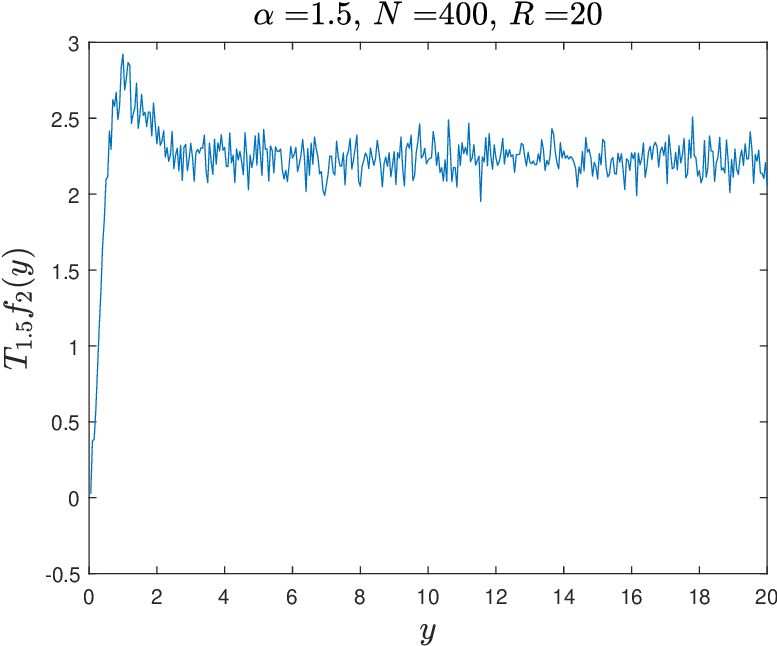}
		\caption{}
	\end{subfigure}
	\begin{subfigure}{0.32\textwidth}
		\includegraphics[width=\textwidth]{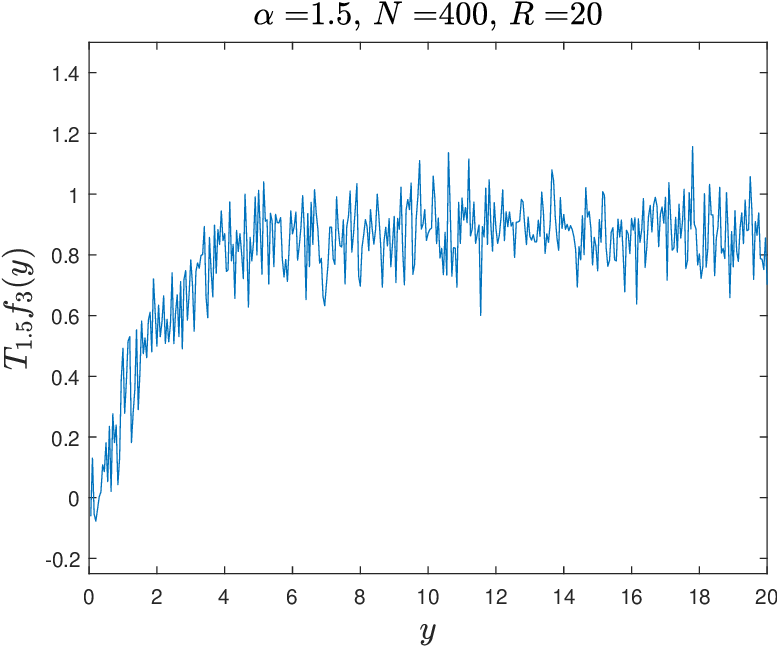}
		\caption{}
	\end{subfigure}
	\caption{\textbf{Noisy data.} Gaussian noise added to $T_{1.5} f$, sampled on the interval $[0,20]$ at $400$ equidistant points, for all three spectral densities $f_1$, $f_2$ and $f_3$.}
	\label{fig:Tfnoise}
\end{figure}

Recall from Section \ref{mollifier}, that we compute the smoothed solution of the function $f$ by $f^{(N)}_{R,\gamma}=\mathcal{F}^{-1}\left(f^{(N)}_R\cdot\psi_\gamma\right)$, where $\psi_\gamma$ is the reconstruction kernel with smoothing parameter $\gamma>0$ satisfying $\psi_\gamma=\mathcal{F}e_\gamma$ for a given mollifier function $e_\gamma$, and $f^{(N)}_{R,\gamma}$ is the approximation of the Fourier transform $\mathcal{F}f$ as computed in the previous Sections. 

There are many options for mollifiers available. We consider the following two examples of mollifiers $e^{(1)}, e^{(2)}$ and their corresponding reconstruction kernels $\psi^{(1)}, \psi^{(2)}$.
\begin{ex}\label{ex2}~\\[-4ex]
	\begin{enumerate}[(i)]
		\item $
		e^{(1)}(x)=
		\begin{cases}
		1-\vert x\vert&,-1\leq x\leq 1,\\
		0&, \text{ else},
		\end{cases}
		\qquad\text{with}\qquad
		\psi^{(1)}(y)=2\left(1-\cos(y)\right)/y^2,~y\in\mathbb{R}.
		$
		\item $
		e^{(2)}(x)=e^{-\pi x^2},~x\in\mathbb{R},
		\qquad\text{with}\qquad\psi^{(2)}(y)=e^{-y^2/(4\pi)},~y\in\mathbb{R}.
		$
	\end{enumerate}
\end{ex}

The solution $\bm{\xi}$ of the linear equation $\bm\eta=\bm{C\xi}$ is interpolated linearly as well as with the truncated cardinal series, see Figure \ref{fig:Ffa15noise}. Whichever interpolation method is employed before does not play a significant role. 
\begin{figure}[h]
	\centering
	\begin{subfigure}{0.32\textwidth}
		\includegraphics[width=\textwidth]{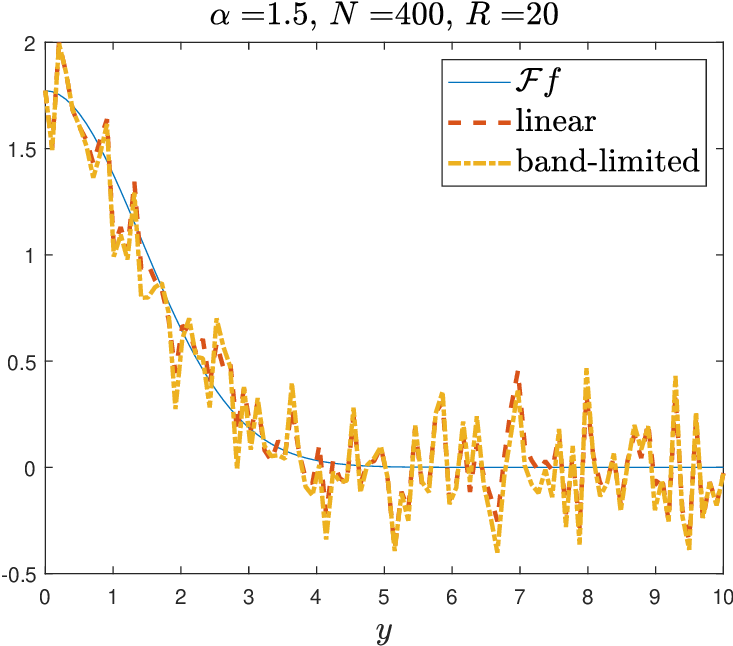}
		\caption{}
	\end{subfigure}
	\begin{subfigure}{0.32\textwidth}
		\includegraphics[width=\textwidth]{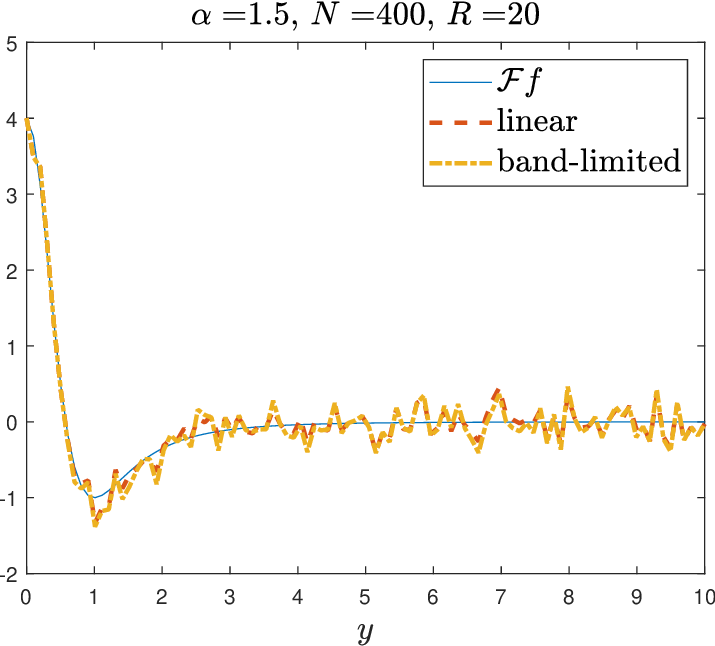}
		\caption{}
	\end{subfigure}
	\begin{subfigure}{0.32\textwidth}
		\includegraphics[width=\textwidth]{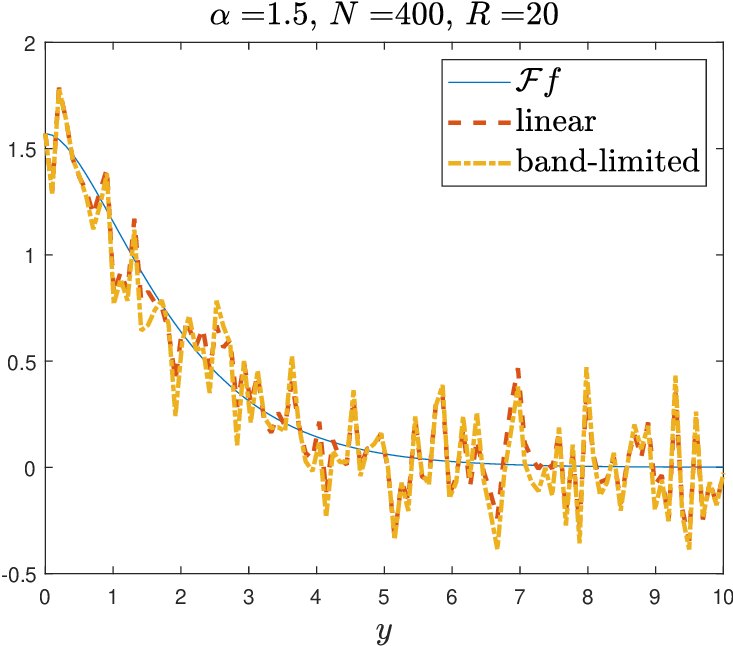}
		\caption{}
	\end{subfigure}
	\caption{\textbf{Noisy data.} Linear interpolation and band-limited interpolation in the case $\alpha=1.5$ for all three example functions.}
	\label{fig:Ffa15noise}
\end{figure}
The smoothed solution is computed for all three examples with $\gamma=0.5$. Smoothing was performed with reconstruction kernel $\psi^{(1)}$. The results are displayed in Figure \ref{fig:fa15moll_LI}.
\begin{figure}
	\centering
	\begin{subfigure}{0.32\textwidth}
		\includegraphics[width=\textwidth]{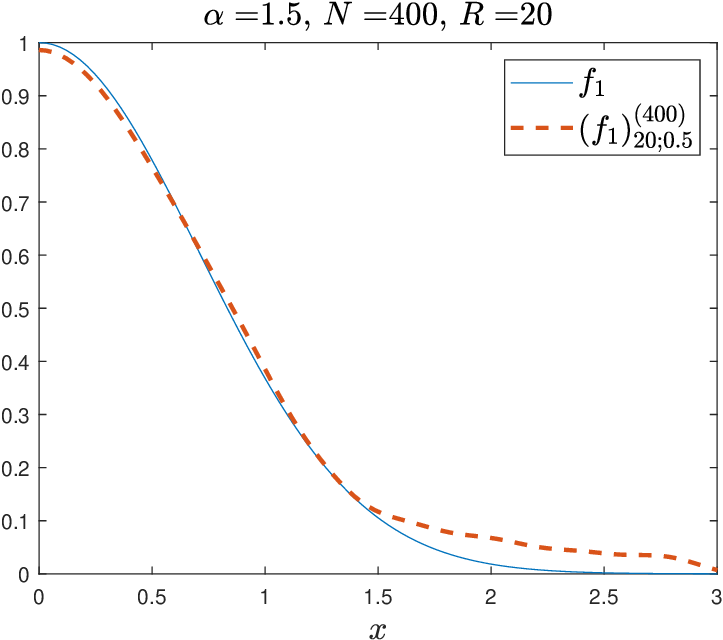}
		\caption{}
	\end{subfigure}
	\begin{subfigure}{0.32\textwidth}
		\includegraphics[width=\textwidth]{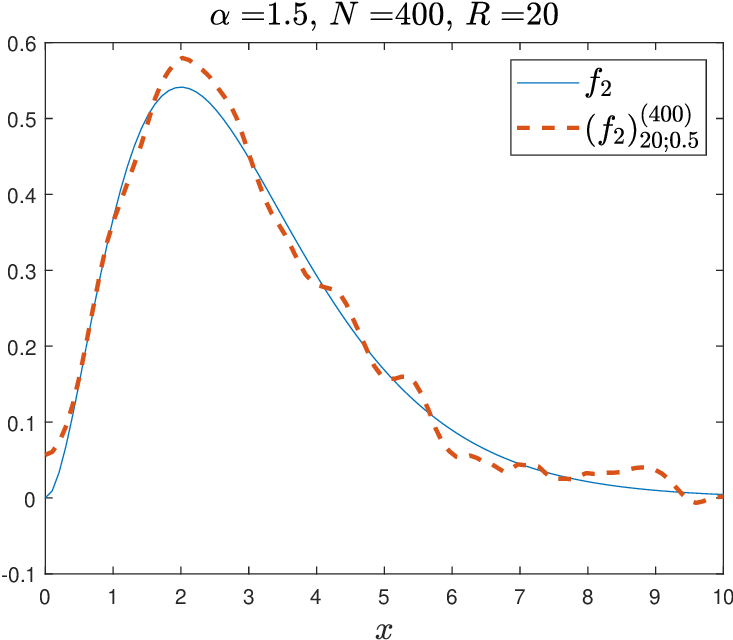}
		\caption{}
	\end{subfigure}
	\begin{subfigure}{0.32\textwidth}
		\includegraphics[width=\textwidth]{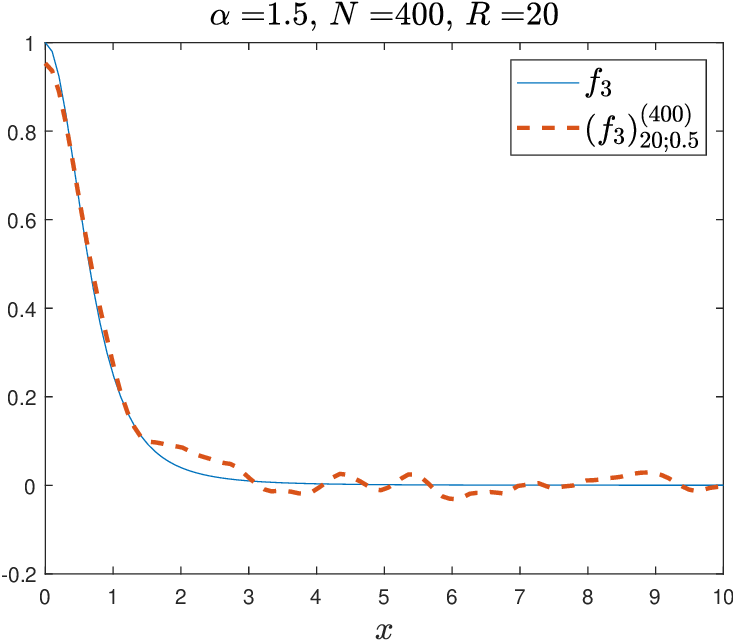}
		\caption{}
	\end{subfigure}
	\caption{\textbf{Smoothed, band-limited interpolation, Inversion.} Smoothed solution for the case $\alpha=1.5$. Inversion performed with $N=400, R=20$ and reconstruction kernel $\psi^{(1)}_{0.5}$ with $\gamma=0.5$. }
	\label{fig:fa15moll_LI}
\end{figure}

\subsubsection{The case $-1<\alpha<0$}

In the following we consider negative values $-1<\alpha<0$. Figure \ref{fig:Tfneg} shows the respective transforms for the functions of Example \ref{ex}. The transform $T_\alpha f$ is not defined at $0$, and as the parameter $\alpha$ approaches $-1$ the irregularities in the graph of $T_\alpha f$ become more sizable, which is explained by the increasing difficulty regarding the integrability of the integral kernel $\vert\sin(t/2)\vert^\alpha$.
\begin{figure}[h]
	\centering
	\begin{subfigure}{0.32\textwidth}
		\includegraphics[width=\textwidth]{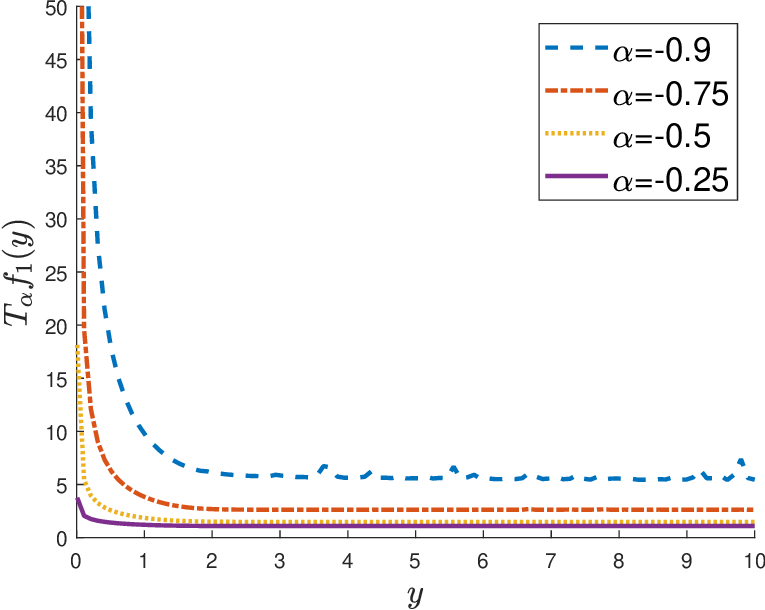}
		\caption{}
	\end{subfigure}
	\begin{subfigure}{0.32\textwidth}
		\includegraphics[width=\textwidth]{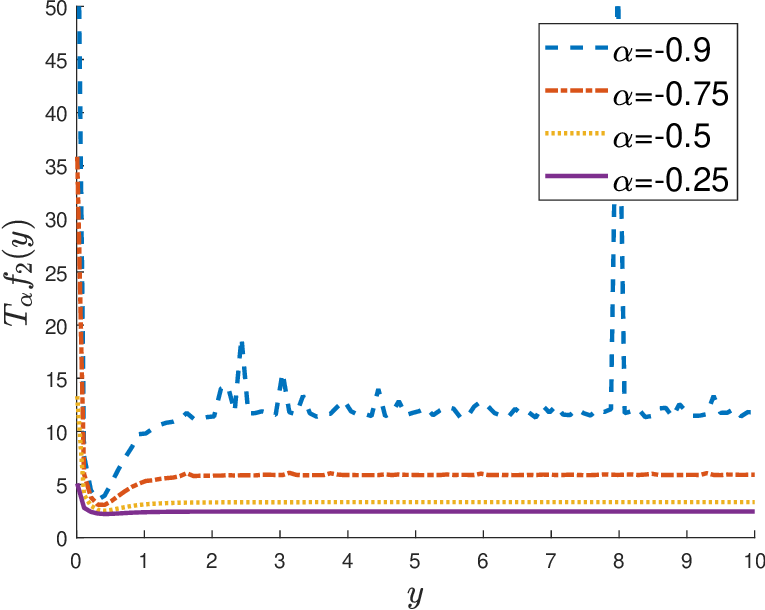}
		\caption{}
	\end{subfigure}
	\begin{subfigure}{0.32\textwidth}
		\includegraphics[width=\textwidth]{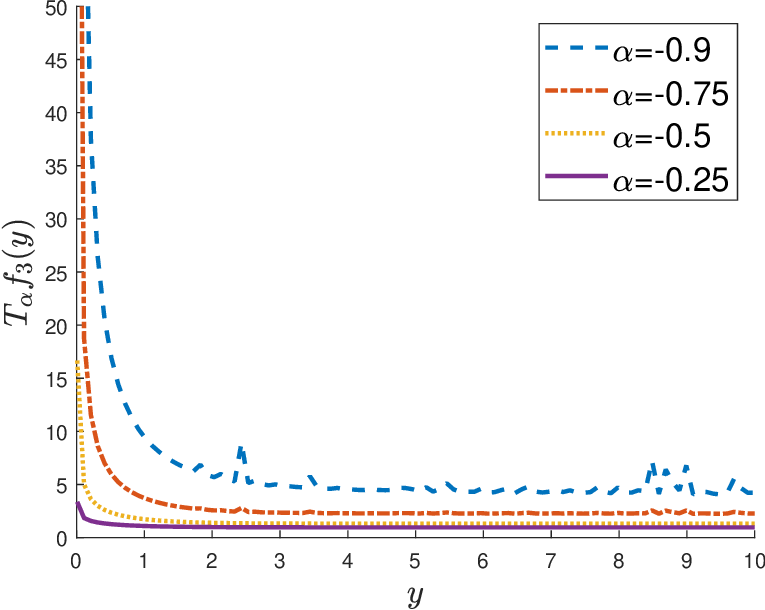}
		\caption{}
	\end{subfigure}
	\caption{\textbf{$\alpha\in(-1,0)$.} Plots of the function $T_\alpha f$ for all three examples $f_1, f_2$ and $f_3$ with $\alpha\in\{-0.9,-0.75,-0.5,-0.25\}$.}
	\label{fig:Tfneg}
\end{figure}

Figure \ref{fig:fneg} and \ref{fig:f2neg} show the inversion results of all three examples using band-limited interpolation for $\alpha=-0.5$ as well as for example $f_2$ and $\alpha\in\{-0.9,-0.75,-0.25\}$, respectively. All results were computed with $N=100, R=10$. For the case $\alpha=-0.9$, the reconstruction of $f_2$ shows larger deviations. This was to be expected looking at the irregularities of the transform $T_{-0.9}f_2$ in Figure \ref{fig:Tfneg} (b). 
For $f_1$ and $f_2$ the results for values of $\alpha$ close to $-1$ are better as the graphs of their transforms $T_\alpha f$ do not show such severe spikes. 
\begin{figure}[h]
	\centering
	\begin{subfigure}{0.32\textwidth}
		\includegraphics[width=\textwidth]{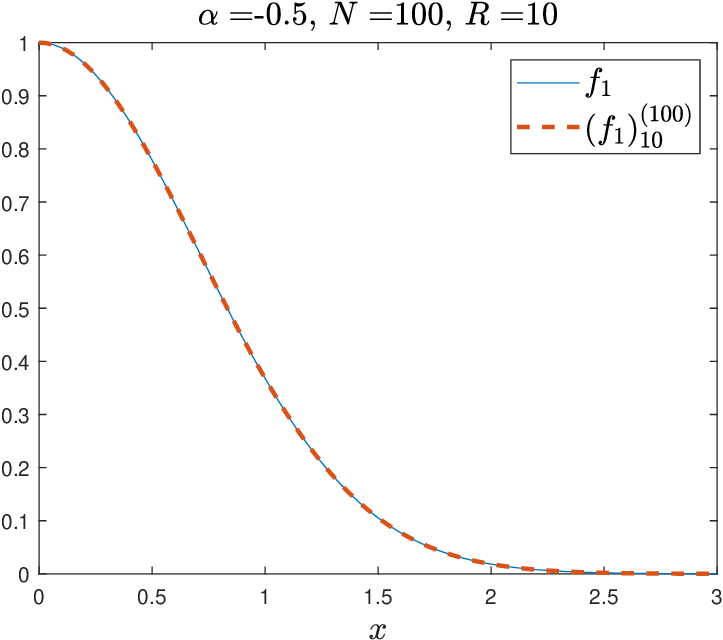}
		\caption{}
	\end{subfigure}
	\begin{subfigure}{0.32\textwidth}
		\includegraphics[width=\textwidth]{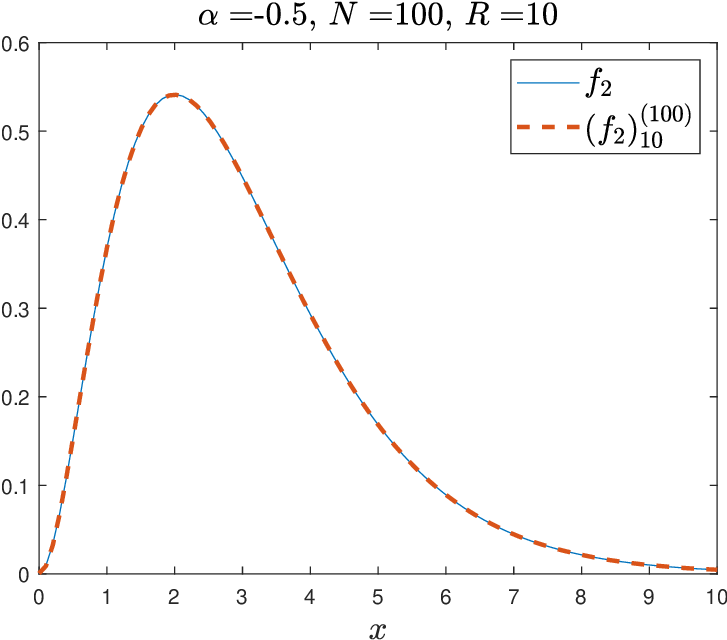}
		\caption{}
	\end{subfigure}
	\begin{subfigure}{0.32\textwidth}
		\includegraphics[width=\textwidth]{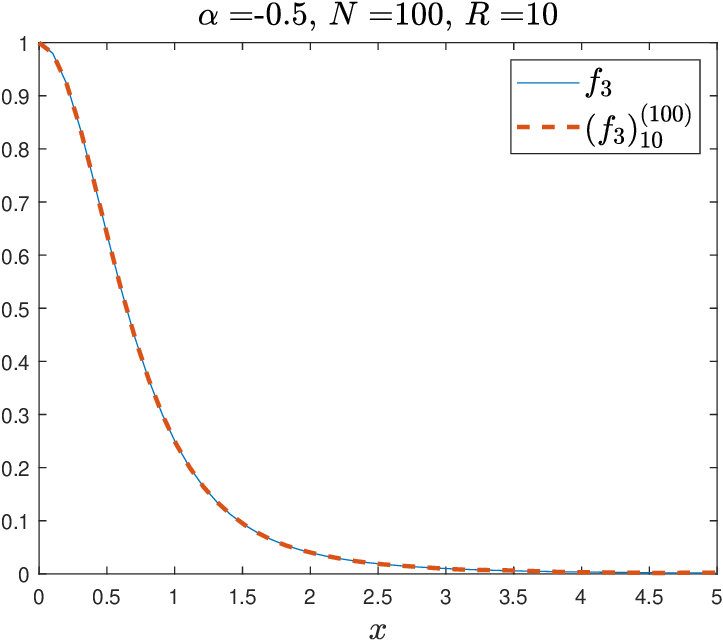}
		\caption{}
	\end{subfigure}
	\caption{\textbf{Band-limited interpolation, Inversion, $\alpha=-0.5, N=100, R=10$.} Solid blue line shows the functions $f_1, f_2$ and $f_3$.  The dashed red line shows the inversion results.}
	\label{fig:fneg}
\end{figure}
\begin{figure}[h]
	\centering
	\begin{subfigure}{0.32\textwidth}
		\includegraphics[width=\textwidth]{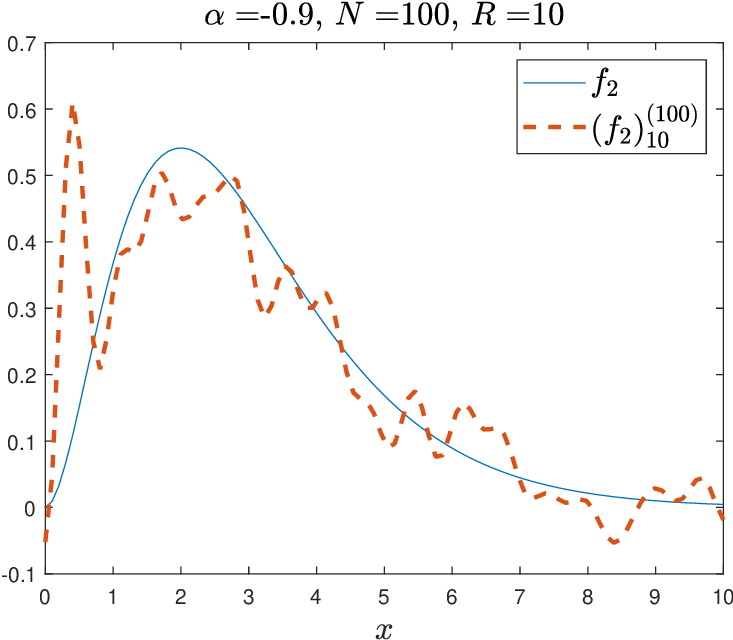}
		\caption{}
	\end{subfigure}
	\begin{subfigure}{0.32\textwidth}
		\includegraphics[width=\textwidth]{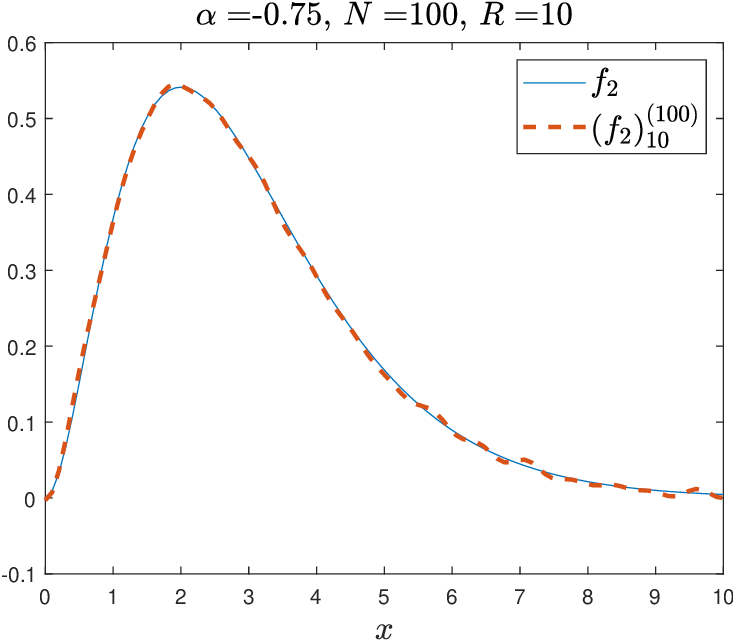}
		\caption{}
	\end{subfigure}
	\begin{subfigure}{0.32\textwidth}
		\includegraphics[width=\textwidth]{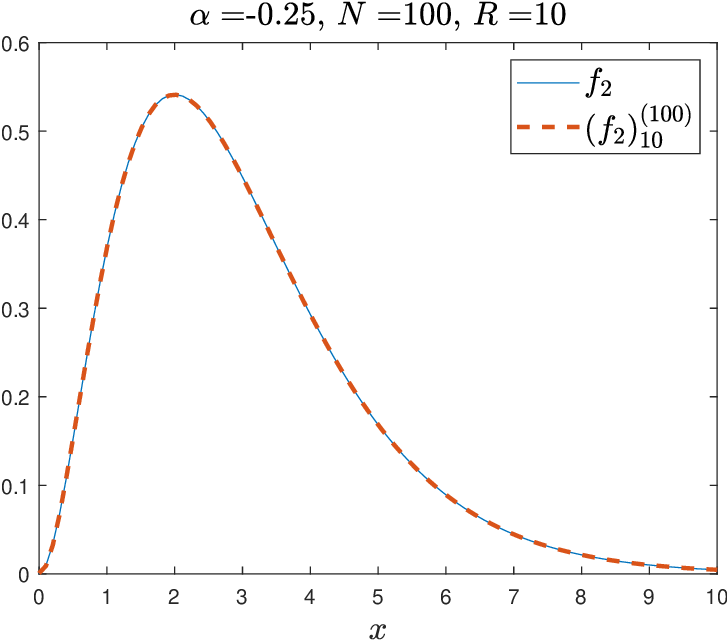}
		\caption{}
	\end{subfigure}
	\caption{\textbf{Band-limited interpolation, Inversion, $\alpha\in\{-0.9,-0.75,-0.25\}, N=100, R=10$.} Solid blue line shows the function $f_2$. The dashed red line shows the inversion results.}
	\label{fig:f2neg}
\end{figure}

\subsection{Spherical $\alpha$-cosine transform on $S^1$}
\label{numspherecos}

We consider the following examples of even probability density functions on the unit circle $S^1$: 
\begin{ex}
	\label{SphereEx}
	\begin{enumerate}[(a)]
		\item $f_1(x)=\vert\sin\left(x-h\right)\vert/4$ with $h\in[-\pi,\pi]$.
		\item $f_2(x)=I^{-1}e^{\cos(4(x-h)}$ with $h\in[-\pi,\pi]$, where $I$ is a normalizing constant. 
		\item $f_3(x;\mu,\kappa)=M\left(1/2,1,\kappa\right)^{-1}e^{\kappa\cos(x-\mu)^2}$ (\emph{two-dimensional Watson distribution}), where the normalization factor $M(1/2,d/2,\kappa)$ is the $d$-dimensional Kummer function. Samples of this distribution concentrate around $\pm\mu\in[-\pi,\pi]$ with concentration parameter $\kappa>0$.
	\end{enumerate}
\end{ex}
Their two-dimensional spherical $\alpha$-cosine transform reads $K_{\alpha,S}f(y)=\int_{-\pi}^\pi\vert\cos(y-x)\vert^\alpha f(x)dx$ for $\alpha>-1$. Applying Corollary \ref{spherecoscor}, we get an estimate $f_N$ for the density function $f$ given by
\begin{align*}
	f_N(x)=\frac{1}{2\pi}\left(1+\sum\limits_{n=-N,~n\neq0}^N\frac{\reallywidehat{K_{\alpha,S} f}(2n)}{\tilde{c}_n}e^{i2nx}\right).
\end{align*}

Figure \ref{fig:Rf} shows the spherical $\alpha$-cosine transform of all three densities $f_1,f_2,f_3$ from Example \ref{SphereEx} with various values of $\alpha>-1$. The results of the reconstruction of all three functions from their Fourier coefficients for $\alpha=1.5$, using Corollary \ref{spherecoscor}, are given in Figure \ref{fig:fRfa15}. The results inversion results for other values of $\alpha>-1$ show an identical picture to Figure \ref{fig:fRfa15}, and thus are omitted here.
\begin{figure}[h]
	\centering
	\begin{subfigure}{0.32\textwidth}
		\includegraphics[width=\textwidth]{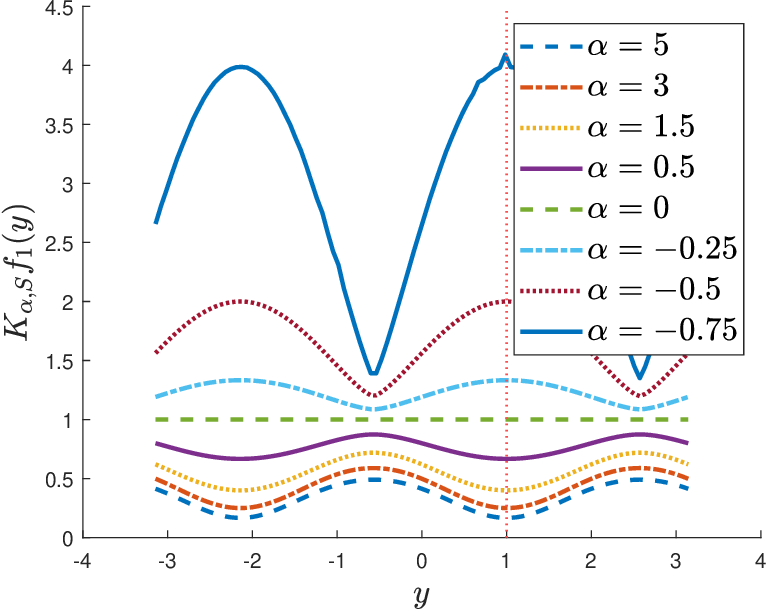}
		\caption{}
	\end{subfigure}
	\begin{subfigure}{0.32\textwidth}
		\includegraphics[width=\textwidth]{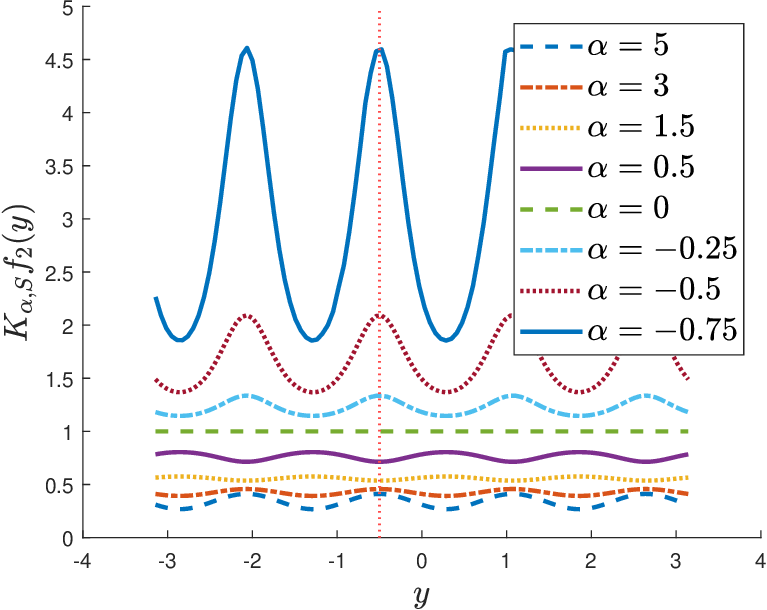}
		\caption{}
	\end{subfigure}
	\begin{subfigure}{0.32\textwidth}
		\includegraphics[width=\textwidth]{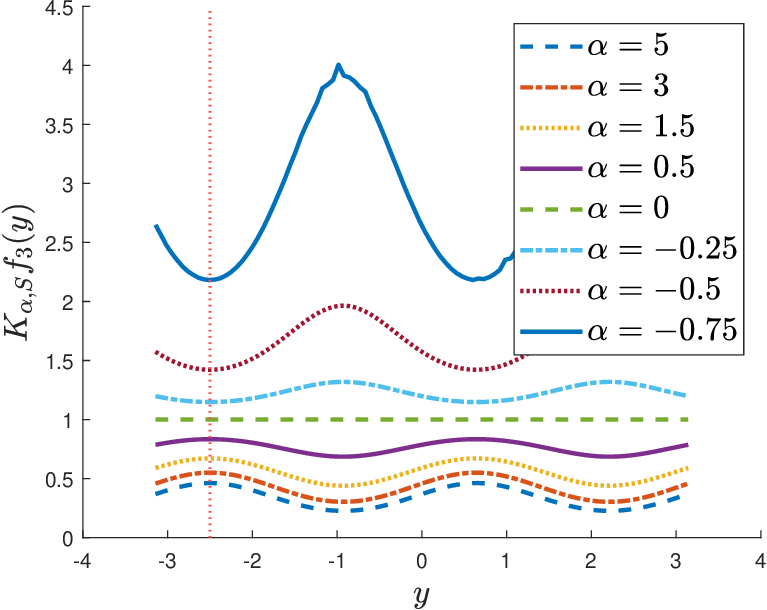}
		\caption{}
	\end{subfigure}
	\caption{\textbf{Spherical $\alpha$-cosine transform on $S^1$.} Plots of $R_\alpha f$ for the examples $f_1$ (with $h=1$), $f_2$ ($h=-0.5$) and $f_3$ (with $\mu=-2.5$, $\kappa=1$) and $\alpha\in\{5,3,1.5,0.5,0,-0.25,-0.5,-0.75\}$. The vertical red line illustrates the horizontal shift of the examples from their respective version which is even about 0.}
	\label{fig:Rf}
\end{figure}
\begin{figure}[h]
	\centering
	\begin{subfigure}{0.32\textwidth}
		\includegraphics[width=\textwidth]{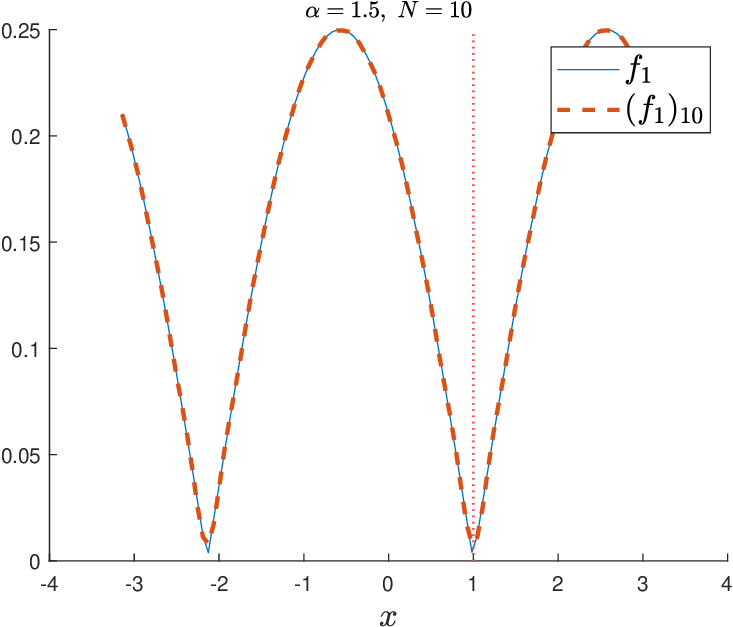}
		\caption{}
	\end{subfigure}
	\begin{subfigure}{0.32\textwidth}
		\includegraphics[width=\textwidth]{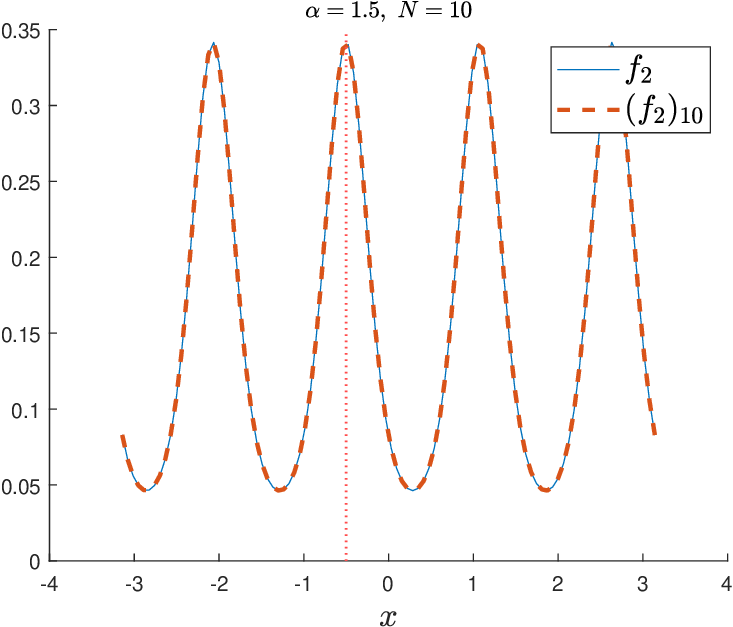}
		\caption{}
	\end{subfigure}
	\begin{subfigure}{0.32\textwidth}
		\includegraphics[width=\textwidth]{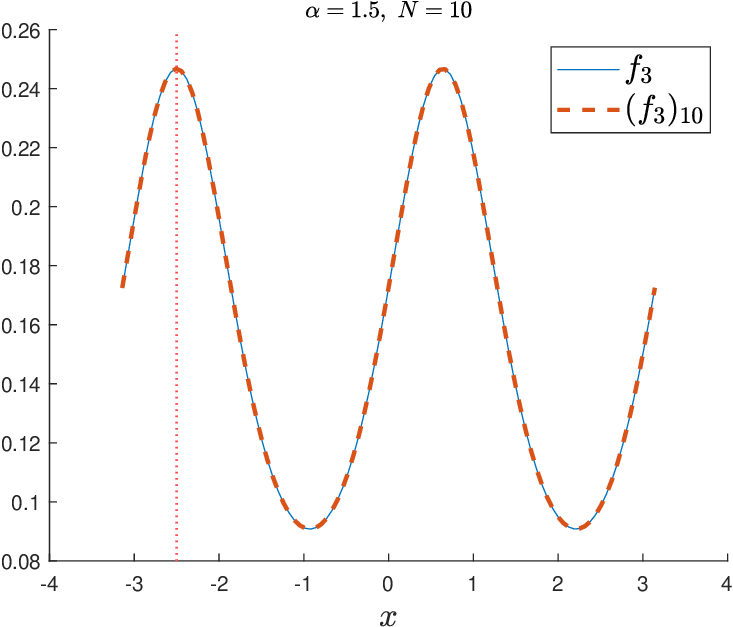}
		\caption{}
	\end{subfigure}
	\caption{\textbf{Spherical $1.5$-cosine transform on $S^1$, Inversion, $N=10$.} Inversion results for all three examples $f_1$, $f_2$ (with $h=1$) and $f_3$ (with $\mu=-2.5$, $\kappa=1$). The solid blue line shows the actual function $f$. The red dashed line shows the result of the inversion.}
	\label{fig:fRfa15}
\end{figure}

\section{Discussion}

Corollary \ref{final_cor} gives an inversion operator of $T_\alpha$ that is injective on the space of square integrable functions on the real positives with respect to a weighted Lebesgue measure. 
The direct approach 
provides a direct computational formula for the function $f$ from its $\alpha$-sine transform $T_\alpha f$, and generally works on a larger class of transformations than the presented $\alpha$-sine transform, but is lacking in terms of efficiency. The practical implementation of the underlying theory quickly proves to be quite cumbersome. 
The numerical instability of the function $\mu_+$ as well as the Fourier transformation $\mathcal{F}_+$ on the multiplicative group $(\mathbb{R}_+,\cdot)$ require tremendous computational effort.
Most importantly, the direct approach is only applicable for $\alpha>1$. 

The choice of the cut-off parameter $\varepsilon$ significantly impacts the results of the inversion. An insufficiently small choice leads to large deviations around $0$. Very small $\varepsilon$ reduce these deviations but at the price of very long computation times. Satisfying results could only be achieved when accepting several hours of computation (on a modern Intel i5 8500 6-core CPU).
In our examples, $T_\alpha f$ needed to be sampled and an interpolation was used as an approximate. A direct application of Corollary \ref{final_cor} to the integral form of $T_\alpha f$ led to no results as numerical integration failed. The step size of the sample had to be chosen as small as $10^{-6}$. Larger step sizes increased computation times enormously or ended in diverging numerical integration. See Table \ref{tab:comp} for a comparison of the direct approach to the Fourier approximation approach in terms of computation time and accuracy. 

The Fourier approximation approach is based on the series representation of $T_\alpha f$ given in Theorem \ref{prop2}, which holds for all $\alpha>-1$. The specific Fourier expansion of the integral kernels $\vert\sin(x)\vert^\alpha$ and $\vert\cos(x)\vert^\alpha$, respectively, allow for the construction of a fast and efficient approximative inversion method. 
In the case $\alpha=2$, the transform $T_2 f$ is simply a linear transformation of the Fourier transform $\mathcal{F}f$, thus inversion is achieved by a simple application of the inverse Fourier transform. 
In any other case $\alpha>-1$, the representation of $T_\alpha f$ as a series of the Fourier transform $\mathcal{F}f$, and the resulting system of linear equation from Proposition \ref{propmatrix} enable us to compute an approximate of $\mathcal{F}f$ efficiently. Applying the inverse Fourier transform allows us to estimate the function $f$.

The presented Fourier approximation method is fast and delivers accurate results for the inversion in a matter of seconds. The solution of the system of linear equations involved is easily computed since the matrix $\bm{C}$ is triangular and non-singular by construction. We have seen that the parameters $N$ and $R$ can be chosen in a practical manner. Our computations were performed with only $N=100$ equidistant samples of $Tf$ on the interval $[0,10]$. This is a huge advantage over the direct approach, where only a much smaller step size of $10^{-6}$ led to acceptable results. 

Table \ref{tab:comp} compares the numerical performance of all methods for the function $f_2$ from Example \ref{ex}(b). Linear interpolation performed slightly faster than band-limited interpolation but is less accurate, and both perform tremendously faster and are more accurate than the direct approach. Most of the error in the direct approach stems from the fluctuation of the inversion near $0$, e.g. on the interval $[0.5,R]$ the $L^2$-distance is much smaller. Still the long computation times are an immense drawback. But most importantly, the advantage of the Fourier approximation approach lies in its applicability for all $\alpha>-1$. 
\vspace{0.5cm}
\begin{table}[h]
	\renewcommand{\arraystretch}{1.5}
	\begin{center}
		\begin{tabular}{ |c|c||p{2.5cm}|p{2.5cm}| p{2.5cm}| p{2.5cm}|  }
			\hline
			\multicolumn{2}{|c||}{}&\multicolumn{2}{|c|}{$\alpha=2$}&\multicolumn{2}{|c|}{$\alpha=1.5$}\\
			\hline
			\multicolumn{2}{|c||}{\textbf{Method}}&Computation time (in sec.)&$L^2$-distance to $f_2$ on $[0,R]$&Computation time (in sec.)&$L^2$-distance to $f_2$ on $[0,R]$\\
			\hline\hline
			\multicolumn{2}{|c||}{Direct approach $(\ast)$}&$386.8818$&$4.7530\times10^{-2}$ $(\ast^2)$&$1.0711\times 10^5$&$0.5048$ $(\ast^3)$\\
			\hline
			\multirow{2}{2cm}{Fourier approximation}&band-limited&$1.2031$&$5.4255\times 10^{-4}$&$1.5785$&$5.4383\times 10^{-4}$\\
			\cline{2-6}
			&linear interp.&$0.0469$&$7.3239\times 10^{-3}$&$0.4423$&$7.3239\times 10^{-3}$\\
			\hline
		\end{tabular}
		\caption{
			Comparison between the direct approach and the Fourier approximation method (with band-limited or linear interpolation). Computations were performed for $f_2$ (cf. Example \ref{ex}(b)) with $R=10$, $N=100$. The computation time was measured for the computation of the approximate $\hat{f}_2$ at $100$ equidistantly spaced points on $[0,R]$.\\[1ex]
			$(\ast)$ For $\alpha=2$ we used $\varepsilon=0.025$ and the analytical form of $T_2f_2$ given in Example \ref{ex} (b). For $\alpha=1.5$ we set $\varepsilon=0.1$ and used a discrete sample of $T_{1.5}f_2$ with a sample step size of $10^{-6}$.\\[1ex]
			$(\ast^2)$ Computation was performed on $[0.05,R]$ only as computation times increase tremendously the closer the left integration boundary is to $0$. The error becomes larger, too.\\[1ex]
			$(\ast^3)$ Due to high computation times, we used the built-in trapezoid discretization for numerical integration.}
		\label{tab:comp}   
	\end{center}
\end{table}

When dealing with noise inflicted input data, the convolution property of the Fourier transformation enables us to compute a smoothed solution to our inversion problem. Multiplication of the interpolated approximate of the Fourier transform of $f$ with a reconstruction kernel corresponds to the convolution of $f$ with the corresponding mollifier. For our synthetically generated, noise corrupted data, satisfactory smoothed solutions are still achieved when dealing with noise that is considerably higher than the examples shown in Section \ref{noisy}. 

The closer $\alpha$ gets to $-1$, the more difficult it is to compute satisfactory inversion results, as we have seen in the case $\alpha=-0.9$ for example $f_2$. The deviation of the inversion from its target is deeply connected to the numerical integrability of the integral kernel for $\alpha$ close to $-1$. Signs of the numerical instability can be seen immediately from the spikes in the transform $T_\alpha f$. A viable solution to this can be achieved by smoothing $T_\alpha f$ as we have done in our sample with synthetically noise inflicted data before. 

Lastly, we also derived a series representation for the two-dimensional spherical $\alpha$-sine and cosine transforms with the identical coefficients as for the transforms on $\mathbb{R}_+$. This allows us establish an inversion algorithm which computes the Fourier coefficients of an even function on the unit circle from the Fourier coefficients of its two-dimensional spherical transform.
For $\alpha>-1$, $\alpha\neq0,2,4,\dots$ successful inversion results for probability density functions on the unit circle $S^1$ are achieved.



\appendix




 \bibliographystyle{model4-names}

\bibliography{bibliography}

\end{document}